\renewcommand{\phi}{\varphi}
\newcommand{\fracpar}[2]{\frac{\partial{#1}}{\partial{#2}}}
\newcommand{\F}{\mathcal{F}}
\newcommand{\Winf}{{W^{1,\infty}(\Real)}}
\newcommand{\muac}{\mu_{\text{\rm ac}}}
\newcommand{\D}{\mathcal{D}}
\newcommand{\G}{\mathcal{G}}
\newcommand{\I}{\mathbf{I}}
\newcommand{\lP}{\mathbf{P}}
\newcommand{\Ltwo}{{L^2}}
\newcommand{\Hone}{{H^1}}
\newcommand{\Linf}{{L^\infty}}
\newcommand{\sfrac}[2]{\mbox{\footnotesize
    $\displaystyle\frac{#1}{#2}$}}
\newcommand{\abs}[1]{\left\vert#1\right\vert}
\newcommand{\R}{\mathbb R}
\newcommand{\Real}{\mathbb R}
\newcommand{\e}{\ensuremath{\mathrm{e}}}
\newcommand{\epsi}{\varepsilon}
\newcommand{\norm}[1]{\left\Vert#1\right\Vert}
\newcommand{\snorm}[1]{\Vert#1\Vert}
\newcommand{\id}{\text{Id}}
\newcommand{\bigo}[1]{\mathcal{O}(#1)}
\DeclareMathOperator{\sgn}{sgn}
\newtheorem{theorem}{Theorem}[section]
\newtheorem{lemma}[theorem]{Lemma}
\newtheorem{proposition}[theorem]{Proposition}
\theoremstyle{definition}
\newtheorem{definition}[theorem]{Definition}
\begin{document}  
\title[Numerical schemes for hyperelastic
rod]{Convergent Numerical Schemes for the 
  Compressible Hyperelastic Rod Wave Equation}

\author[Cohen]{David Cohen}
\author[Raynaud]{Xavier Raynaud}            
\address[Raynaud]{\newline 
Center of Mathematics for Applications\\           
University of Oslo\\
NO--$0316$ Oslo \\
Norway}
\email{xavierra@cma.uio.no}   
\urladdr{http://folk.uio.no/xavierra/}   
\address[Cohen]{\newline Mathematisches Institut\\
Universit\"at Basel\\
CH--$4051$ Basel\\
Switzerland}
\email{david.cohen@unibas.ch}
\urladdr{http://www.math.unibas.ch/~cohen/}

\subjclass[2000]{Primary: 65M06, 65M12; 
Secondary: 35B99, 35Q53}
\keywords{Hyperelastic rod wave equation, 
Camassa--Holm equation, numerical scheme, 
positivity, invariants}
          
\date{\today}   

\begin{abstract}
  We propose a fully discretised numerical scheme
  for the hyperelastic rod wave equation on the
  line. The convergence of the method is
  established. Moreover, the scheme can handle the
  blow-up of the derivative which naturally occurs
  for this equation. By using a time splitting
  integrator which preserves the invariants of the
  problem, we can also show that the scheme preserves
  the positivity of the energy density.
\end{abstract}

\maketitle

\section{Introduction}\label{sect-intro}

We consider the compressible hyperelastic rod wave equation
\begin{equation}\label{eq:hr}
  u_t-u_{xxt}+3uu_x-\gamma
  (2u_xu_{xx}+uu_{xxx})=0.
\end{equation}
The equation is obtained by Dai in \cite{dai98} as
a model equation for an infinitely long rod
composed of a general compressible hyperelastic
material. The author considers a far-field, finite
length, finite amplitude approximation for a
material where the first order dispersive terms
vanish. The function $u=u(t,x)$ represents the radial
stretch relative to a prestressed state. The
parameter $\gamma\in\R$ is a constant which
depends on the material and the prestress of the
rod and physical values lie between -29.4760 and
3.4174. For materials where first order dispersive
terms cannot be neglected, the KdV equation
\begin{equation*}
  u_t+uu_x+u_{xxx}=0
\end{equation*}
applies and only smooth solitary waves exists. In
contrast, the hyperelastic rod equation
\eqref{eq:hr} admits sharp crested solitary waves.

The Cauchy problems of the hyperelastic rod wave
equation on the line and on the circle are studied
in \cite{ConStr:00} and \cite{yin:03},
respectively. The stability of a class of solitary
waves for the rod equation on the line is
investigated in \cite{ConStr:00}. In
\cite{lenells:06}, Lenells provides a
classification of all traveling waves. In
\cite{ConStr:00,yin:03}, the authors establish,
for a special class of initial data, the global
existence in time of strong solutions. However, in
the same papers, they also present conditions on
the initial data for which the solutions blow up
and, in that case, global classical solutions no
longer exist. The way the solution blows up is
known: In the case $\gamma>0$, there is a point
$x\in\Real$ and a blow-up time $T$ for which
$\lim_{t\to T}u_x(t,x)=-\infty$ for 
(for $\gamma<0$, we have $\lim_{t\to T}u_x(t,x)=\infty$).

To handle the blow-up, weak solutions have to be
considered but they are no longer unique. For
smooth solutions, the energy $\int_\Real
(u^2+u_x^2)\,dx$ is preserved and $H^1(\Real)$ is
a natural space for studying the solutions. After
blow-up, there exist two consistent ways to
prolong the solutions, which lead to
\textit{dissipative} and \textit{conservative}
solutions. In the first case, the energy which is
concentrated at the blow-up point is dissipated
while, in the second case, the same energy is
restored. The global existence of dissipative
solution is established in \cite{CHK2}. In the
present article, we consider the conservative
solutions, whose global existence is established
in \cite{horay07}.

There are only a few works in the literature which
are concerned with numerical methods for the
hyperelastic rod wave equation. In
\cite{mayama08}, the authors consider a Galerkin
approximation which preserves a discretisation of
the energy. In \cite{cohray:10}, a
Hamiltonian-preserving numerical method and a
multisymplectic scheme are derived.  In both
works, no convergence proofs are provided and the
schemes cannot handle the natural blow-up of the
solution.

In this paper, we propose a fully discretised
numerical scheme which can compute the solution on any
finite time interval. In particular, it can
approach solutions which have locally unbounded
derivatives (the condition $u_x\in L^2(\Real)$
allows for an unbounded derivative in
$L^\infty(\Real)$). A standard space discretisation 
of \eqref{eq:hr} cannot give us global solutions. 
To obtain these solutions, we follow 
the framework given in \cite{horay07}. 
With a coordinate transformation
into Lagrangian coordinates, we first rewrite the
problem as a system of ordinary differential
equations in a Banach space
(Sections~\ref{sect-semigroup}
and~\ref{sect-bana}). We establish new decay
estimates (Section~\ref{sect-decay}) which allow
us to consider solutions defined on the whole real
line. We discretise the system of equations in
space (Section~\ref{sect-semi}) and time
(Section~\ref{sect-time}) and study the 
convergence of the numerical solution 
in Section~\ref{sect-full}. In
Section~\ref{sect-appdata}, we explain how to
define a converging sequence of initial data. This
construction can be applied to any initial data in
$H^1(\Real)$. Finally, in
Section~\ref{sect-numexp}, numerical experiments
demonstrate the validity of our theoretical
results. Moreover, the time splitting
discretisation enables the scheme to preserve
invariants and we can use this property to prove
that the scheme preserves the positivity of a
discretisation of the energy density
$u^2+u_x^2\,dx$, see Theorem \ref{th:fullinv}.

The results of this paper are also valid for the
generalised hyperelastic rod wave equation
\begin{equation}\label{eq:ghr}
u_t-u_{xxt}+\sfrac{1}{2}g(u)_x-\gamma
(2u_xu_{xx}+uu_{xxx})=0, \quad
u|_{t=0}=u_0.
\end{equation}
However, for simplicity only the
numerical discretisation of equation \eqref{eq:hr}
will be analysed.  Equation \eqref{eq:ghr} was first
introduced in \cite{CHK2}; it defines a whole
class of equations, depending on the choice of the
(locally uniformly Lipschitz) function $g$ and the
value of the parameter $\gamma$, which contains
several well-known nonlinear dispersive equations.
Taking $\gamma=1$ and $g(u)=2\kappa u+3u^2$ (with
$\kappa\geq0$), equation \eqref{eq:ghr} reduces to
the Camassa--Holm equation \cite{camassa-holm93};
For $g(u)=3u^2$, equation \eqref{eq:ghr} becomes
the hyperelastic rod wave equation \eqref{eq:hr};
For $g(u)=2u+u^2$ and for $\gamma=0$, equation
\eqref{eq:ghr} leads to the Benjamin-Bona-Mahony
(BBM) equation (or regularised long wave)
\cite{bbm72}.

\section{The Semigroup of Conservative Solutions}
\label{sect-semigroup}

The purpose of this section is to recall the main
results of \cite{horay07} about the conservative
solutions of the hyperelastic rod wave equation
\eqref{eq:hr}.  The total energy for the
hyperelastic rod wave equation is given by the
$H^1$ norm, which is preserved in time for smooth
solutions. An important feature of this equation
is that it allows for the concentration of the
energy density $(u^2+u_x^2)\,dx$ on set of zero
measure. To construct a semigroup of conservative
solution, it is necessary to keep track of the
energy when it concentrates. This justifies the
introduction of the set $\D$ defined as follows.
\begin{definition} \label{def:D}
The set $\D$ is composed of all pairs $(u,\mu)$
such that $u$ belongs to $\Hone(\R)$ and $\mu$ is a
positive finite Radon measure whose absolute
continuous part, $\muac$, satisfies
\begin{equation*}
\muac=(u^2+u_x^2)\,dx.
\end{equation*}
\end{definition}
The measure $\mu$ represents the energy density
and the set $\D$ allows $\mu$ to have a singular
part.  The solutions of \eqref{eq:hr} are
constructed via a change of coordinates, from
Eulerian to Lagrangian coordinates. An extra
variable which account for the energy is
necessary. Let us sketch this construction. We
apply the inverse Helmholtz operator
$(\id-\partial_{xx})^{-1}$ to \eqref{eq:hr} and
obtain the system of equations
\begin{subequations}
  \label{eq:hr2}
  \begin{align}
    \label{eq:hr21}
    u_t+\gamma uu_x+P_x&=0\\
    \label{eq:hr22}
    P-P_{xx}&=\sfrac{3-\gamma}{2}u^2+\sfrac{\gamma}{2}u_x^2.
  \end{align}
\end{subequations}
By using the Green function of the Helmholtz
operator, we can write $P$ in an explicit form,
i.e.,
\begin{equation}\label{eq:P}
  P(t,x)=\sfrac{1}{2}\int_\R\e^{-|x-z|}
  \bigl(\sfrac{3-\gamma}{2}u^2+\sfrac{\gamma}{2}u_x^2\bigr)
  (t,z)\,dz.
\end{equation}
We also define
\begin{equation}\label{eq:Q}
  Q(t,x):=P_x(t,x)=-\sfrac{1}{2}\int_\R\sgn(x-z)\e^{-|x-z|}
  \bigl(\sfrac{3-\gamma}{2}u^2+\sfrac{\gamma}{2}u_x^2\bigr)
  (t,z)\,dz.
\end{equation}
Next, we introduce the characteristics $y(t,\xi)$
defined as the solutions of
\begin{equation*}
  y_t(t,\xi)=\gamma u(t,y(t,\xi))
\end{equation*}
with $y(0,\xi)$ given. The variable $y(t,\xi)$
corresponds to the trajectory of a particle in the
velocity field $\gamma u$. However, the Lagrangian
velocity will be defined as
\begin{equation*}
  U(t,\xi)=u(t,y(t,\xi)).
\end{equation*}
From \eqref{eq:hr21}, we get
\begin{equation*}
  U_t(t,\xi)=u_t(t,y)+y_tu_x(t,y)=(u_t+\gamma uu_x)(t,y)=-P_x(t,y)=-Q(t,y).
\end{equation*}
As it can be checked directly from \eqref{eq:hr2},
smooth solutions satisfy the following transport
equation for the energy density:
\begin{equation}\label{eq:trp}
  \bigl(u^2+u_x^2\bigr)_t+\bigl(\gamma u(u^2+u_x^2)\bigr)_x
  =\bigl(u^3-2Pu\bigr)_x.
\end{equation}
After introducing the cumulative energy $H(t,\xi)$
as
\begin{equation*}
  \label{eq:H}  
  H(t,\xi):=\int_{-\infty}^{y(t,\xi)}(u^2+u_x^2)\,dx,
\end{equation*}
we can rewrite the transport equation
\eqref{eq:trp} as
\begin{equation*}
  H_t(t,\xi)=U^3(t,\xi)-2P(t,y)U(t,\xi).
\end{equation*}
To obtain a system of differential equations in
terms only of the Lagrangian variables
$X=(y,U,H)$, we have to express \eqref{eq:P} and
\eqref{eq:Q} in terms of these new variables. This
can be done (see \cite{horay07} for the details)
and we obtain
\begin{equation*}
  P(t,\xi)=\sfrac{1}{2}\int_\R
  \e^{-\sgn(\xi-\eta)(y(\xi)-y(\eta))}
  \Bigl(\sfrac{3-2\gamma}{2}U^2y_\xi+
  \sfrac{\gamma}{2}H_\xi\Bigr)(\eta)\,d\eta,
\end{equation*}
\begin{equation*}
  Q(t,\xi)=-\sfrac{1}{2}\int_\R\sgn(\xi-\eta)
  \e^{-\sgn(\xi-\eta)(y(\xi)-y(\eta))}
  \Bigl(\sfrac{3-2\gamma}{2}U^2y_\xi+
  \sfrac{\gamma}{2}H_\xi\Bigr)(\eta)\,d\eta.
\end{equation*}
Finally, we obtain the following system of
differential equations
\begin{subequations}
  \label{eq:genorg}
  \begin{align}
    \label{eq:genorg1}
    y_t&=\gamma U\\
    \label{eq:genorg2}
    U_t&=-Q\\
    \label{eq:genorg3}
    H_t&=U^3-2PU,
  \end{align}
\end{subequations}
which we rewrite in the compact form
\begin{equation*}
  X_t=F(X).
\end{equation*}
The derivatives of $P$ and $Q$ are given by
\begin{equation*}
  P_\xi(t,\xi)=Q(t,\xi)y_\xi
\end{equation*}
and 
\begin{equation}
  \label{eq:Qder}
  Q_\xi(t,\xi)=\sfrac{\gamma}{2}H_\xi+
  \sfrac{3-2\gamma}{2}U^2y_\xi-Py_\xi.
\end{equation}
Thus, after differentiating \eqref{eq:genorg}, we obtain
\begin{subequations}
  \label{eq:genorgder}
  \begin{align}
    \zeta_{\xi t}&=\gamma U_\xi\ (\text{or }y_{\xi t}=\gamma U_\xi),\\
    U_{\xi t}&=\frac{\gamma}{2}H_\xi+\left(\frac{3-2\gamma}{2}U^2-P\right)y_\xi,\\
    H_{\xi
      t}&=-2Q\,Uy_\xi+\left(3U^2-2P\right)U_\xi,
  \end{align}
\end{subequations}
where we denote $y(\xi)=\zeta(\xi)+\xi$.

The mapping $F$ is a mapping from $E$ to $E$, where
$E$ is a Banach space that we now define. We
denote by $V$ the space defined as
\begin{equation*}
V=\{f\in C_b(\Real) \ |\ f_\xi\in\Ltwo(\R)\},
\end{equation*}
where $C_b(\Real)=C(\Real)\cap\Linf(\R)$. The
space $V$ is a Banach space for the norm
$\norm{f}_V:=\norm{f}_\Linf+\norm{f_\xi}_\Ltwo$.
The Banach space $E$ is then defined as
\begin{equation*}
E=V\times \Hone\times V
\end{equation*} 
with norm 
$\norm{f}_E:=\norm{f}_V+\norm{f}_\Hone+\norm{f}_V$.
In \cite{horay07}, the existence of short-time
solutions of \eqref{eq:genorg} 
is established by a standard contraction
argument in $E$. The solutions of
\eqref{eq:genorg} are not in general global in
time but for initial data $(\zeta_0,U_0,H_0)$ which
belongs to the set $\F$, which we now define, they
are.
\begin{definition}
\label{def:F}
The set $\F$ consists of all $(\zeta,U,H)\in E$
such that
\begin{subequations}
\label{eq:lagcoord}
\begin{align}
\label{eq:lagcoord1}
&(\zeta,U,H)\in \left[\Winf\right]^3\quad\text{and}\quad
\lim_{\xi\rightarrow-\infty}H(\xi)=0\\
\label{eq:lagcoord2}
&y_\xi\geq0, H_\xi\geq0, y_\xi+H_\xi\geq c
\text{  almost everywhere, for some constant $c>0$}\\
\label{eq:lagcoord3}
&y_\xi H_\xi=y_\xi^2U^2+U_\xi^2\text{ almost
  everywhere.}
\end{align}
\end{subequations}
\end{definition}
The set $\F$ is preserved by the flow, that is, if
$X(0)\in\F$ and $X(t)$ is the solution to
\eqref{eq:genorg} corresponding to this initial
value, then $X(t)\in\F$ for all time $t$. The
properties of the set $\F$ can then be used to
establish apriori estimates on the solutions and
show that they exit globally in time, see
\cite{horay07} for more details. We denote by
$S_t$ the semigroup of solutions in $\F$ given by
the solutions of \eqref{eq:genorg}.

Given an initial data $(u,\mu)\in\D$, we have to
find the corresponding initial data in $\F$; we
have to define a mapping between Eulerian and
Lagrangian variables. To do so, we set
\begin{subequations}
\label{eq:Ldef}
\begin{align}
\label{eq:Ldef1}
y(\xi)&=\sup\left\{y\ |\ \mu((-\infty,y))+y<\xi\right\},\\
\label{eq:Ldef2}
H(\xi)&=\xi-y(\xi),\\
\label{eq:Ldef3}
U(\xi)&=u\circ{y(\xi)}.
\end{align}
\end{subequations}
We define $X=L(u,\mu)$ and $L$ maps Eulerian to
Lagrangian variables. When $\mu=\muac$ (no energy
is concentrated), equation \eqref{eq:Ldef1}
simplifies and we get
\begin{equation*}
  y(\xi)+\int_{-\infty}^{y(\xi)}(u^2+u_x^2)(x)\,dx=\xi.
\end{equation*}
Reciprocally, we define the mapping $M$ from
Lagrangian to Eulerian variables: Given
$X=(y,U,H)\in\F$, we recover $(u,\mu)=M(X)\in\D$
by setting
\begin{subequations}
\label{eq:umudef}
\begin{align}
\label{eq:umudef1}
&u(x)=U(\xi)\text{ for any }\xi\text{ such that  }  x=y(\xi),\\
\label{eq:umudef2}
&\mu=y_\#(H_\xi\,d\xi).
\end{align}
\end{subequations}
Here, $y_\#(H_\xi\,d\xi)$ denotes the push-forward
of the measure $H_\xi\,d\xi$ by the mapping $y$.

In conclusion, the construction of the global
conservative solutions is based on the change of
variable from Eulerian to Lagrangian. However,
this change of variable is not bijective. The
discrepancy between the two sets of variables is
due to the freedom of relabeling in Lagrangian
coordinates. The relabeling functions can be
identified as a group, which basically consists of
the diffeomorphisms of the line with some
additional assumptions (see \cite{horay07}). Given
$X=(y,U,H)$, the element $X\circ f=(y\circ
f,U\circ f,H\circ f)$ is called the relabeled
version of $X$ with respect to the relabeling
function $f$. We can check that
\begin{equation*}
  M(X)=M(X\circ f),
\end{equation*}
that is, several configurations in Lagrangian
variables correspond to the same Eulerian
configuration. In this article, we will not be too
concerned with the aspects of relabeling
invariance. They have however to be taken into
account to establish the semigroup property of the
semigroup of solutions in Eulerian variables. We
also use the relabeling invariance in Section
\ref{sect-numexp} to construct initial data in a
convenient way for some particular initial
conditions. We define the semigroup of solutions
$T_t$ in Eulerian coordinates as
\begin{equation}
  \label{eq:defTt}
  T_t=M\circ S_t\circ L.
\end{equation}
Finally, we recall the following main result from
\cite{horay07}.
\begin{theorem}
  \label{th:main} The mapping
  $T\colon\D\times\Real_+\to\D$, where $\D$ is
  defined by Definition \ref{def:D}, defines a
  continuous semigroup of conservative solutions
  of the hyperelastic rod wave equation
  \eqref{eq:hr}, that is, given $(\bar
  u,\bar\mu)\in\D$, if we denote by
  $t\mapsto(u(t),\mu(t))=T_t(\bar u,\bar\mu)$ the
  corresponding trajectory, then $u$ is a weak
  solution of the hyperelastic rod wave equation
  \eqref{eq:hr2}.
\end{theorem}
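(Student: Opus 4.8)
The plan is to transport the whole problem into Lagrangian coordinates, where the dynamics is governed by the globally well-posed ODE system \eqref{eq:genorg}, and to verify three things in turn: that $T_t$ maps $\D$ into itself, that $\{T_t\}$ is a semigroup, and that the recovered $u$ solves \eqref{eq:hr2} weakly.

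First I would check that each of the three maps in $T_t = M \circ S_t \circ L$ behaves correctly on the relevant sets. For $L\colon\D\to\F$, one verifies that the data defined by \eqref{eq:Ldef} satisfies the Lipschitz regularity \eqref{eq:lagcoord1}, the monotonicity and non-degeneracy \eqref{eq:lagcoord2}, and above all the Lagrangian constraint \eqref{eq:lagcoord3}, which encodes the identity $\muac=(u^2+u_x^2)\,dx$ from Definition \ref{def:D}. That $S_t$ preserves $\F$ is already stated. For $M\colon\F\to\D$, one checks that the value $u(x)=U(\xi)$ in \eqref{eq:umudef1} is independent of the choice of $\xi$ with $x=y(\xi)$ (on an interval where $y$ is constant, \eqref{eq:lagcoord3} forces $U_\xi=0$, so $U$ is constant there), that $u\in\Hone$, and that the push-forward $\mu=y_\#(H_\xi\,d\xi)$ has the correct absolutely continuous part. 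Composing yields $T_t\colon\D\to\D$.

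The semigroup property $T_{s+t}=T_s\circ T_t$ is the delicate point. Since $S_{s+t}=S_s\circ S_t$, it is enough to control the middle composition $L\circ M$ that appears on expanding $T_s\circ T_t$. This map is not the identity on $\F$ because of the relabeling freedom; instead one shows that $L\circ M(X)=X\circ f$ for a suitable relabeling function $f$. The identity is then closed using two equivariance properties: the flow commutes with relabeling, $S_t(X\circ f)=S_t(X)\circ f$, and $M$ is relabeling invariant, $M(X\circ f)=M(X)$ (stated in the excerpt). Chaining these gives $M\circ S_t\circ L\circ M=M\circ S_t$ on $\F$, whence the semigroup identity. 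The base case $T_0=\id$ reduces to $M\circ L=\id$ on $\D$, which one checks directly from \eqref{eq:Ldef} and \eqref{eq:umudef}.

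For continuity and the weak-solution claim, one equips $\D$ with a metric pulled back through $L$ from the topology of $E$ modulo relabeling, and checks that $L$, $t\mapsto S_t$, and $M$ are each continuous, so that $T\colon\D\times\Real_+\to\D$ is continuous. To show $u$ is a weak solution of \eqref{eq:hr2}, I would insert a test function into the weak formulation in Eulerian coordinates and change variables via $x=y(t,\xi)$; under this substitution the ODEs \eqref{eq:genorg} together with the derivative identities \eqref{eq:genorgder} reproduce the transport structure \eqref{eq:hr21}, and the explicit kernels match \eqref{eq:P}--\eqref{eq:Q}. The hard part will be the semigroup step: proving that $L\circ M(X)$ is genuinely a relabeling of $X$ and that the flow commutes with relabeling forces one to confront the non-bijectivity of the Eulerian--Lagrangian correspondence directly, via a careful description of the relabeling group and its action. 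The verification of \eqref{eq:lagcoord3} along the flow and the degeneration of the change of variables at points where $y_\xi=0$ (concentrated energy) are the remaining technical pressure points.
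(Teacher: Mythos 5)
This theorem is not proved in the paper at all: it is recalled verbatim from the reference \cite{horay07}, so there is no in-paper proof to compare against. Your outline faithfully reproduces the strategy of that reference (the Holden--Raynaud construction): $L$ and $M$ as inverse-up-to-relabeling maps between $\D$ and $\F$, preservation of $\F$ by the Lagrangian flow, the semigroup property closed via equivariance of $S_t$ under relabeling together with $M(X\circ f)=M(X)$, and the weak formulation recovered by the change of variables $x=y(t,\xi)$; you also correctly single out the genuinely delicate points (that $L\circ M$ is a relabeling, the metric on $\D$ modulo relabeling, and the degeneracy where $y_\xi=0$). As a self-contained proof it remains a roadmap --- each of those hard steps is named rather than executed --- but as a description of the argument it matches the cited construction.
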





The function $y(t,\xi)$ gives the trajectory of a
particle which evolves in the velocity field given
by $\gamma u(t,x)$. If $u$ is smooth, then it is
Lipschitz in the second variable and the mapping
$\xi\to y(t,\xi)$ remains a diffeomorphism. We
denote its inverse by $x\to y^{-1}(t,x)$. In this
case, the density $\rho(t,x)$ is given by
\begin{equation}
  \label{eq:defrho}
  \rho(t,x)=\frac1{y_\xi(t,y^{-1}(t,x))}.
\end{equation}
We can also recover the energy density as
\begin{equation}
  \label{eq:defendens}
  (u^2+u_x^2)(t,x)=\frac{H_\xi}{y_\xi}(t,y^{-1}(t,x))).
\end{equation}
In the following sections, we design numerical schemes which
preserve the positivity of the particle and energy
densities as defined in \eqref{eq:defrho} and
\eqref{eq:defendens}.

\section{Equivalent System of ODEs in a Banach
  space}\label{sect-bana}

In this section, we reformulate the hyperelastic
rod wave equation \eqref{eq:hr2} as a system of
ordinary differential equations in a Banach space
as this was done in \cite{horay07} but where we
decouple the functions $y$, $U$ and $H$ and their
derivatives $y_\xi$, $U_\xi$ and $H_\xi$ that 
we denote by $q$, $w$ and $h$. We set
$\zeta(t,\xi):=y(t,\xi)-\xi$ and
$v(\xi,1)=q(t,\xi)-1$. If we assume that
\begin{equation}
  \label{eq:assumder}
  q=y_\xi,\quad w=U_\xi\,\text{ and } h=H_\xi
\end{equation}
then \eqref{eq:genorg} and \eqref{eq:genorgder}
rewrite
\begin{subequations}
  \label{eq:sysban}
  \begin{align}
    \label{eq:sysban1}
    \zeta_t=y_t&=\gamma U,\\
    \label{eq:sysban2}
    U_t&=-Q,\\
    \label{eq:sysban3}
    H_t&=U^3-2PU,\\
    \label{eq:sysban4}
    v_t=q_t&=\gamma w,\\
    \label{eq:sysban5}
    w_t&=\sfrac{\gamma}{2}h+
    \bigl(\sfrac{3-2\gamma}{2}U^2-P\bigr)q,\\ 
    \label{eq:sysban6}
    h_t&=-2QUq+\bigl(3U^2-2P\bigr)w,
  \end{align}
\end{subequations}
where $P$ and $Q$ are given by 
\begin{equation}\label{eq:P3}
  P=\sfrac{1}{2}\int_\R
  \e^{-\sgn(\xi-\eta)(y(\xi)-y(\eta))}
  \Bigl(\sfrac{3-2\gamma}{2}U^2q+
  \sfrac{\gamma}{2}h\Bigr)(\eta)\,d\eta
\end{equation}
and 
\begin{equation}\label{eq:Q3}
  Q=-\sfrac{1}{2}\int_\R\sgn(\xi-\eta)
  \e^{-\sgn(\xi-\eta)(y(\xi)-y(\eta))}
  \Bigl(\sfrac{3-2\gamma}{2}U^2q+
  \sfrac{\gamma}{2}h\Bigr)(\eta)\,d\eta.
\end{equation}
Note that equation \eqref{eq:sysban} is semilinear
in the variables $(q,w,h)$. Since the terms $P$ and $Q$
have similar structure, in the remaining of the
paper most of the proofs will be
established just for one of them. Now, we do not
require \eqref{eq:assumder} to hold any longer
and, setting $Y:=(\zeta,U,H,v,w,h)$, we obtain the
system of differential equations
\begin{equation*}
Y_t(t)=G(Y(t)),
\end{equation*}
where $G$ is defined by \eqref{eq:sysban}. In the
remaining, we will sometimes abuse the notation
and write $Y=(y,U,H,q,w,h)$ instead of
$Y=(\zeta,U,H,v,w,h)$. Then, we implicitly assume
the relations $y(\xi)=\zeta(\xi)+\xi$ and
$q=v+1$. The variables $y$ and $q$ are the
physical ones but do not have the proper
decay/boundedness properties at infinity and this
is why $\zeta$ and $v$ have to be introduced. The
system \eqref{eq:sysban} is defined in the Banach
space $F$, where $F$ is given by
\begin{equation*}
F:=L^\infty(\R)\times \bigl(L^\infty(\R)\cap L^2(\R)\bigr)
\times L^\infty(\R)\times L^2(\R)\times L^2(\R) \times
L^2(\R).
\end{equation*}
For any $Y=(\zeta,U,H,v,w,h)\in F$ we use the
following norm on $F$:
\begin{equation*}
  \norm{Y}_F=\norm{\zeta}_{L^\infty}+\norm{U}_{L^2}+\norm{U}_{L^\infty}+\norm{H}_{L^\infty}+
  \norm{v}_{L^2}+\norm{w}_{L^2}+\norm{h}_{L^2}.
\end{equation*}
The following proposition holds.
\begin{proposition}
  \label{prop:LipG}
  The mappings $P:F\to\Hone(\R)$ and
  $Q:F\to\Hone(\R)$ belongs to $C^1(F,H^1(\Real))$
  and $G:F\to F$ belongs to $C^1(F,F)$. Moreover,
  given $M>0$, let
    \begin{equation*}
      B_M=\{X\in F\ |\ \norm{X}_F\leq M\}.
    \end{equation*}
    There exists a constant $C(M)$ which only
    depends on $M$ such that
    \begin{equation}
      \label{eq:bdderPQ}
      \norm{P(Y)}_{H^1}+\norm{Q(Y)}_{H^1}+\norm{\fracpar{P}{Y}(Y)}_{L(F,H^1)}+\norm{\fracpar{Q}{Y}(Y)}_{L(F,H^1)}\leq C(M)
    \end{equation}
    and
    \begin{equation}
      \label{eq:bdderG}
      \norm{G(Y)}_{F}+\norm{\fracpar{G}{Y}(Y)}_{L(F,F)}\leq C(M)
    \end{equation}
    for all $Y\in B_M$.
\end{proposition}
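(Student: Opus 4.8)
The plan is to prove the two bounds by treating $P$ and $Q$ as the core objects—since once they are shown to be $C^1$ from $F$ to $H^1(\R)$ with the stated bounds, the regularity and bounds on $G$ follow almost mechanically from the algebraic structure of \eqref{eq:sysban}. The key observation is that the exponential kernel $\e^{-\sgn(\xi-\eta)(y(\xi)-y(\eta))}$ is bounded by $1$ whenever $y_\xi\ge 0$, but since we have dropped assumption \eqref{eq:assumder}, I must be careful: here $y$ and $q$ are independent variables, so the kernel is $\e^{-\sgn(\xi-\eta)(y(\xi)-y(\eta))}$ with $y=\zeta+\xi$, and $y(\xi)-y(\eta)=(\zeta(\xi)-\zeta(\eta))+(\xi-\eta)$. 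For $\xi>\eta$ the exponent is $-(y(\xi)-y(\eta))=-(\xi-\eta)-(\zeta(\xi)-\zeta(\eta))$, so the kernel is bounded by $\e^{-(\xi-\eta)}\e^{2\norm{\zeta}_{L^\infty}}$, and symmetrically for $\xi<\eta$. This is the crucial point: the $\zeta$-dependence contributes only a bounded multiplicative factor $\e^{2M}$, while the factor $\e^{-|\xi-\eta|}$ provides the convolution decay that makes everything work in $L^2$ and $L^\infty$.

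**First** I would establish the zeroth-order bounds on $\norm{P(Y)}_{H^1}$ and $\norm{Q(Y)}_{H^1}$. Writing $P=\frac12 K * g$ schematically, where $g=\frac{3-2\gamma}{2}U^2q+\frac{\gamma}{2}h$ and $K$ is the $\zeta$-modulated kernel, I bound $\norm{g}_{L^1}\le C\big(\norm{U}_{L^2}^2\norm{q}_{L^\infty}+\norm{h}_{L^1}\big)$—but since $q=v+1$ with only $v\in L^2$, I instead split $q=1+v$ and estimate $\norm{U^2 q}_{L^1}\le\norm{U}_{L^2}^2+\norm{U}_{L^\infty}\norm{U}_{L^2}\norm{v}_{L^2}$, and $\norm{h}_{L^1}$ by Cauchy--Schwarz against the exponential, exploiting that convolution of $L^2$ against $\e^{-|\cdot|}$ lands in $L^\infty\cap L^2$. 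By Young's inequality, $\e^{-|\cdot|}*L^2\subset L^2$ and $\e^{-|\cdot|}*L^1\subset L^\infty$, giving $\norm{P}_{L^\infty}$ and $\norm{P}_{L^2}$ in terms of $M$. For the $H^1$ norm I use $P_\xi=Qy_\xi$ and the explicit formula \eqref{eq:Qder} for $Q_\xi$, which expresses the derivatives as the same type of convolution integrals plus algebraic terms, so the same Young/Cauchy--Schwarz estimates close the bound \eqref{eq:bdderPQ} at zeroth order.

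**The harder part** is the differentiability and the bound on the Fréchet derivatives $\partial P/\partial Y$ and $\partial Q/\partial Y$ in $L(F,H^1)$. The dependence on $(U,v,h)$ is polynomial inside the integral and hence smooth, but the dependence on $\zeta$ enters through the exponential kernel, so differentiating in $\zeta$ brings down a factor $\mp(\zeta(\xi)-\zeta(\eta))$ multiplying the kernel. I must verify that this extra factor is controlled: the product $|\xi-\eta|^k\e^{-|\xi-\eta|}$ is still bounded and integrable, and $|\zeta(\xi)-\zeta(\eta)|\le 2\norm{\zeta}_{L^\infty}$, so differentiation does not destroy the convolution decay—it only replaces $\e^{-|\cdot|}$ by a kernel still dominated by a constant times $(1+|\cdot|)\e^{-|\cdot|}\in L^1\cap L^\infty$. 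I would make the differentiability rigorous by exhibiting the candidate derivative as a bounded linear operator via these estimates and then checking the remainder is $o(\norm{\delta Y}_F)$ using the mean value form of the exponential increment. I expect the main technical obstacle to lie precisely in handling the Gâteaux-to-Fréchet upgrade for the $\zeta$-direction uniformly on $B_M$, since this is the only genuinely nonlinear and non-polynomial dependence; all other directions reduce to multilinear convolution estimates. Once \eqref{eq:bdderPQ} holds, the bound \eqref{eq:bdderG} follows because each component of $G$ in \eqref{eq:sysban} is a polynomial in $(U,q,w,h)$ times $P$ or $Q$, and products of $L^\infty$ and $L^2$ functions with the already-bounded $P,Q\in H^1\hookrightarrow L^\infty$ are estimated by the algebra property and the chain rule, completing the proof.
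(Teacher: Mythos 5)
Your proposal is correct and, for the bounds themselves, follows the same skeleton as the paper: you make the identical key observation that $y(\xi)-y(\eta)=(\xi-\eta)+(\zeta(\xi)-\zeta(\eta))$, so the kernel is $\e^{-\abs{\xi-\eta}}$ times a factor controlled by $\e^{2\norm{\zeta}_{L^\infty}}$; you use the same Young inequalities ($\e^{-\abs{\cdot}}\star L^2\subset L^2\cap L^\infty$) for the zeroth-order estimates; you use the same explicit formulas $P_\xi=Qq$ and \eqref{eq:Qder} for the $H^1$ part; and you deduce \eqref{eq:bdderG} from \eqref{eq:bdderPQ} by the product rule, exactly as the paper does via Lemma \ref{lem:Blip}. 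Where you genuinely diverge is the proof of differentiability in the $\zeta$-direction. You keep the kernel as a joint function of $(\xi,\eta)$ and differentiate under the integral sign, obtaining a candidate derivative with the extra factor $\mp(\bar\zeta(\xi)-\bar\zeta(\eta))$ and then estimating a remainder. The paper instead splits $Q=Q_1+Q_2$ according to the sign of $\xi-\eta$ and on each piece factors the $\zeta$-dependence multiplicatively, $\e^{-\sgn(\xi-\eta)(\zeta(\xi)-\zeta(\eta))}=\e^{-\sgn(\xi-\eta)\zeta(\xi)}\,\e^{\sgn(\xi-\eta)\zeta(\eta)}$, so that $Q_1=-\tfrac12\e^{-\zeta(\xi)}A(R(Y))$ with $A$ a fixed bounded convolution and $R(Y)=\e^{\zeta}\bigl(\tfrac{3-2\gamma}{2}U^2(1+v)+\tfrac{\gamma}{2}h\bigr)$ purely pointwise; Fr\'echet differentiability is then automatic from the product rule applied to elementary $C^1$ maps, with no differentiation under the integral at all. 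Your route also works, and the obstacle you flag (the G\^ateaux-to-Fr\'echet upgrade) is in fact harmless: since $\abs{\bar\zeta(\xi)-\bar\zeta(\eta)}\leq 2\norm{\bar\zeta}_{L^\infty}$ uniformly in $(\xi,\eta)$, the exponential's Taylor remainder $\e^{a}(\e^{b}-1-b)$ is pointwise $O(\norm{\bar\zeta}_{L^\infty}^2)$ on $B_M$, so the remainder operator is quadratic in $\norm{\bar Y}_F$ directly in $L^2\cap L^\infty$ --- and no polynomial factor $\abs{\xi-\eta}^k$ ever enters, contrary to what your sketch suggests. Two small points to clean up: the factor brought down by differentiation is the increment $\bar\zeta(\xi)-\bar\zeta(\eta)$, not $\zeta(\xi)-\zeta(\eta)$; and $h$ lies only in $L^2$ in the space $F$, so the intermediate $L^1$ bound on the integrand that you first write down is unavailable --- your subsequent switch to pairing $L^2$ data against $\e^{-\abs{\cdot}}$ is the correct (and the paper's) estimate.
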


Here, abusing slightly the notations, we denote by
the same letter $P$ the function $P(t,\xi)$ and
the mapping $Y\mapsto P$. The same holds for
$Q$. The norms $L(F,H^1(\Real))$ and $L(F,F)$ are
the operator norms.

\begin{proof}
  First we prove that the mappings $Y\mapsto P$
  and $Y\mapsto Q$ as given by \eqref{eq:P3} and
  \eqref{eq:Q3} belong to $C^1(F,L^\infty(\R)\cap
  L^2(\R))$. We rewrite $Q$ as
  \begin{align}
    \notag Q(X)(\xi)&= -\sfrac{\e^{-\zeta(\xi)}}{
      2}\int_\Real\chi_{\{\eta<\xi\}} (\eta)
    \e^{-(\xi-\eta)}\e^{\zeta(\eta)}\\ \notag
    &\qquad\qquad\times
    \Bigl(\sfrac{3-2\gamma}{2}U^2q+
    \sfrac{\gamma}{2}h\Bigr)(\eta)(\eta)\,d\eta\\
    \notag
    &\quad+\sfrac{\e^{\zeta(\xi)}}{2}
    \int_\Real\chi_{\{\eta>\xi\}}(\eta)\e^{(\xi-\eta)}
    \e^{-\zeta(\eta)}\\
    \label{eq:Qsum}
    &\qquad\qquad\times\Bigl(\sfrac{3-2\gamma}{2}U^2q+
    \sfrac{\gamma}{2}h\Bigr)(\eta)\,d\eta,
  \end{align}
  where $\chi_B$ denotes the indicator function of
  a given set $B$. We decompose $Q$ into the sum
  $Q_1+Q_2$, where $Q_1$ and $Q_2$ are the
  operators corresponding to the two terms in the 
  sum on the
  right-hand side of \eqref{eq:Qsum}. Let
  $h(\xi)=\chi_{\{\xi>0\}}(\xi)\e^{-\xi}$ and $A$
  be the map defined by $A\colon v\mapsto h\star
  v$. Then, $Q_1$ can be rewritten as
  \begin{equation}
    \label{eq:Q1def}
    Q_1=-\sfrac{\e^{-\zeta(\xi)}}{2}A\circ
    R(Y)(\xi),
  \end{equation}
  where $R$ is the operator from $F$ to $L^2(\R)$
  given by 
  \begin{equation*}
    R(Y)(\xi)=\e^{\zeta(\xi)}\Bigl(\sfrac{3-2\gamma}{2}U^2(1+v)+
    \sfrac{\gamma}{2}h\Bigr)(\xi).
  \end{equation*}
  The mapping $A$ is a continuous linear mapping
  from $L^2(\R)$ into $L^2(\R)\cap L^\infty(\R)$ as, from
  Young inequalities, we have
  \begin{equation}
    \label{eq:young1}
    \norm{h\star v}_{L^2}\leq\norm{h}_{L^1}\norm{v}_{L^2}
    \text{ and }
    \norm{h\star v}_{L^\infty}\leq\norm{h}_{L^2}\norm{v}_{L^2}.
  \end{equation}
  For any $Y\in B_M$, we have
  \begin{align*}
    \norm{Q_1}_{L^2\cap L^\infty}\leq C(M)
    \norm{A\circ R}_{L^2\cap L^\infty}\leq
    C(M)\norm{R}_{L^2}\leq C(M)
  \end{align*}
  for some constant $C(M)$ which depends only on
  $M$. From now on, we denote generically by
  $C(M)$ such constant even if its value may
  change from line to line. The same result holds
  for $Q$ and $P$. Since $R$ is composed of sums
  and products of $C^1$ maps, the fact that
  $R\colon F\rightarrow L^2$ is $C^1$ follows
  directly from the following short lemma whose
  proof is essentially the same as the proof of
  the product rule for derivatives in $\Real$.
  \begin{lemma}  
    \label{lem:Blip}
    Let $1\leq p\leq\infty$. If $K_1\in C^1(F,\Linf(\R))$
    and $K_2\in C^1(F,L^p(\R))$, then the product
    $K_1K_2$ belongs to $C^1(F,L^p(\R))$ and
    \begin{equation*}
      \fracpar{(K_1K_2)}{Y}(Y)[\bar Y]=K_1(Y)\fracpar{K_2}{Y}(Y)[\bar Y]+K_2(Y)\fracpar{K_1}{Y}(Y)[\bar Y].
    \end{equation*}
  \end{lemma}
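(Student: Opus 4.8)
The plan is to reduce the whole statement to the single structural fact that pointwise multiplication is a bounded bilinear map $\Linf(\R)\times L^p(\R)\to L^p(\R)$, i.e.\ $\norm{fg}_{L^p}\leq\norm{f}_{\Linf}\norm{g}_{L^p}$ (H\"older's inequality), and then to run the classical product-rule argument. First I would record that the statement is well posed: since $K_1(Y)\in\Linf(\R)$ and $K_2(Y)\in L^p(\R)$, the product $K_1(Y)K_2(Y)$ indeed lies in $L^p(\R)$, and the proposed linear map $\bar Y\mapsto K_1(Y)\fracpar{K_2}{Y}(Y)[\bar Y]+K_2(Y)\fracpar{K_1}{Y}(Y)[\bar Y]$ is a bounded operator from $F$ to $L^p(\R)$. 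Here the first summand is the product of $K_1(Y)\in\Linf$ with an $L^p$-valued derivative, the second the product of $K_2(Y)\in L^p$ with an $\Linf$-valued derivative, and in both cases H\"older bounds the $L^p$ norm by $\bigl(\norm{K_1(Y)}_{\Linf}\norm{\fracpar{K_2}{Y}(Y)}_{L(F,L^p)}+\norm{K_2(Y)}_{L^p}\norm{\fracpar{K_1}{Y}(Y)}_{L(F,\Linf)}\bigr)\norm{\bar Y}_F$.

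Next I would verify the Fr\'echet condition. Writing $\Delta_i=K_i(Y+\bar Y)-K_i(Y)$ and $D_i=\fracpar{K_i}{Y}(Y)[\bar Y]$, the increment of the product minus the candidate derivative splits as
\begin{align*}
&K_1(Y+\bar Y)K_2(Y+\bar Y)-K_1(Y)K_2(Y)-K_1(Y)D_2-K_2(Y)D_1\\
&\qquad=K_1(Y+\bar Y)(\Delta_2-D_2)+\Delta_1 D_2+K_2(Y)(\Delta_1-D_1).
\end{align*}
Each of the three terms is $o(\norm{\bar Y}_F)$ in $L^p(\R)$: the first by H\"older, using that $\norm{K_1(Y+\bar Y)}_{\Linf}$ stays bounded near $Y$ and $\norm{\Delta_2-D_2}_{L^p}=o(\norm{\bar Y}_F)$ (differentiability of $K_2$); the third by H\"older, using $\norm{\Delta_1-D_1}_{\Linf}=o(\norm{\bar Y}_F)$ (differentiability of $K_1$) with $K_2(Y)$ fixed in $L^p$; and the middle cross term by H\"older, using $\norm{\Delta_1}_{\Linf}\to0$ (continuity of $K_1$) together with $\norm{D_2}_{L^p}=O(\norm{\bar Y}_F)$. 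This proves differentiability with the asserted derivative.

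Finally I would upgrade this to $C^1$ by checking that $Y\mapsto\fracpar{(K_1K_2)}{Y}(Y)$ is continuous into $L(F,L^p(\R))$. Writing $M_g$ for multiplication by $g$, the derivative is $M_{K_1(Y)}\circ\fracpar{K_2}{Y}(Y)+M_{K_2(Y)}\circ\fracpar{K_1}{Y}(Y)$, and for each summand the usual add-and-subtract estimate bounds its variation between $Y$ and $Y'$ by one term controlled by $\norm{\fracpar{K_i}{Y}(Y)-\fracpar{K_i}{Y}(Y')}_{L(F,\cdot)}$ (continuity of the Fr\'echet derivatives) and one term controlled by $\norm{K_i(Y)-K_i(Y')}$ in the appropriate norm (continuity of $K_i$), the remaining factors being locally bounded.

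The argument is entirely routine---indeed, as the paper notes, formally identical to the scalar product rule---so I do not expect a genuine obstacle. The only point demanding attention is the bookkeeping of which factor lives in $\Linf$ and which in $L^p$, so that every product is estimated by H\"older in the correct order; this is precisely where the hypothesis $K_1\in C^1(F,\Linf(\R))$, rather than merely $L^p$, is essential, since it guarantees that all the products above land back in $L^p(\R)$.
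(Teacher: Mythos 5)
Your proof is correct and follows exactly the route the paper intends: the paper omits the proof, remarking only that it is ``essentially the same as the proof of the product rule for derivatives in $\Real$,'' and your argument is the standard elaboration of that remark, with H\"older's inequality $\norm{fg}_{L^p}\leq\norm{f}_{\Linf}\norm{g}_{L^p}$ supplying the bounded bilinearity needed to run the classical three-term decomposition and the continuity check for the derivative. Your algebraic splitting of the increment is verified correct, and the bookkeeping of which factor is measured in $\Linf$ and which in $L^p$ is handled properly throughout.
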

  With this lemma in hands, we thus obtain that
  \begin{equation*}
    \fracpar{R}{Y}(Y)[\bar Y]=\e^{\zeta}\left(\frac{3-2\gamma}{2}\left(\bar\zeta U^2(1+v)+2U\bar U(1+v)+U^2\bar v\right)+\frac{\bar\zeta\gamma}2h+\frac{\gamma}2\bar h\right)
  \end{equation*}
  and
  \begin{equation*}
    \norm{\fracpar{R}{Y}(Y)}_{L(F,L^2)}\leq C(M).
  \end{equation*}
  Then, $Q_1$ is in $C^1(F,L^2(\R)\cap L^\infty(\R))$,
  \begin{equation*}
    \fracpar{Q_1}{Y}(Y)[\bar Y]=\frac{\e^{-\zeta}}{2}(\bar\zeta A(R(Y))-A(\fracpar{R}{Y}(Y)[\bar Y]))
  \end{equation*}
  and
  \begin{equation*}
    \norm{\fracpar{Q_1}{Y}(Y)}_{L(F,L^2\cap L^\infty)}\leq C(M).
  \end{equation*}
  We obtain the same result for $Q_2$, $Q$ and
  $P$. We differentiate $Q$ and get
  \begin{equation}
    \label{eq:diffQ}
    Q_\xi=\sfrac{\gamma}{2}h+
    \sfrac{3-2\gamma}{2}U^2q-Pq,
  \end{equation}
  see \eqref{eq:Qder}. Hence, the mapping
  $Y\mapsto Q_\xi$ is differentiable,
  \begin{equation*}
    \fracpar{Q_\xi}{Y}(Y)[\bar Y]=\frac{\gamma}2\bar h+\frac{3-2\gamma}{2}(2U\bar U+U^2\bar q)-\fracpar{P}{Y}(Y)[\bar Y]q-P\bar q,
  \end{equation*}
  and
  \begin{equation*}
    \norm{\fracpar{Q_\xi}{Y}(Y)}_{L(F,L^2)}\leq C(M).
  \end{equation*}
  It follows that $Q$ belongs to $C^1(F,H^1(\R))$
  and $\norm{\fracpar{Q}{Y}}_{L(F,H^1)}\leq
  C(M)$. The same result holds for $P$ and
  \eqref{eq:bdderPQ} is proved. By using Lemma
  \ref{lem:Blip}, we get that $G\in C^1(F,F)$ and
  this proves \eqref{eq:bdderG}.
\end{proof}

By using Proposition \ref{prop:LipG} and the
standard contraction argument, we prove the
existence of short-time solutions to
\eqref{eq:sysban}:
\begin{theorem}\label{th:exist}
  For any initial values
  $Y_0=(\zeta_0,U_0,H_0,v_0,w_0,h_0) \in F$, there
  exists a time $T$, only depending on the norm of
  the initial values, such that the system of
  differential equations \eqref{eq:sysban} admits
  a unique solution in $C^1([0,T],F)$. Moreover,
  for any two solutions $Y_1$ and $Y_2$ such that
  $\sup_{t\in[0,T]}\norm{Y_1(t)}_F\leq M$ and
  $\sup_{t\in[0,T]}\norm{Y_2(t)}_F\leq M$, then
  \begin{equation}
    \label{eq:stabor}
    \sup_{t\in[0,T]}\norm{Y_1(t)-Y_2(t)}_F\leq C(M)\norm{Y_1(0)-Y_2(0)}_F,
  \end{equation}
  where the constant $C(M)$ depends only on $M$.
\end{theorem}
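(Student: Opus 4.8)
The plan is to prove Theorem~\ref{th:exist} by a standard Picard--Lindel\"of (contraction mapping) argument applied to the integral reformulation of \eqref{eq:sysban}, using the $C^1$-regularity and local boundedness of $G$ furnished by Proposition~\ref{prop:LipG}. First I would fix an initial value $Y_0\in F$, set $M=2\norm{Y_0}_F$, and observe that by \eqref{eq:bdderG} there is a constant $C(M)$ bounding both $\norm{G(Y)}_F$ and $\norm{\fracpar{G}{Y}(Y)}_{L(F,F)}$ on the closed ball $B_M$. Since $G$ is $C^1$ on $B_M$ with its differential bounded in operator norm by $C(M)$, the mean value inequality gives the Lipschitz estimate $\norm{G(Y_1)-G(Y_2)}_F\leq C(M)\norm{Y_1-Y_2}_F$ for all $Y_1,Y_2\in B_M$. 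This Lipschitz bound, together with the bound on $\norm{G}_F$, is the only input from Proposition~\ref{prop:LipG} that I need.

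Next I would define the solution map $\mathcal{T}$ on the space $C([0,T],B_M)$ (with the sup-in-time norm) by $(\mathcal{T}Y)(t)=Y_0+\int_0^t G(Y(s))\,ds$, and choose $T$ small enough, depending only on $\norm{Y_0}_F$ (equivalently on $M$), to make $\mathcal{T}$ both a self-map of the ball and a strict contraction. For the self-map property I would estimate $\norm{(\mathcal{T}Y)(t)-Y_0}_F\leq \int_0^t\norm{G(Y(s))}_F\,ds\leq T\,C(M)$, so requiring $T\,C(M)\leq M/2$ keeps the image in $B_M$ since $\norm{Y_0}_F=M/2$. For the contraction property I would use the Lipschitz estimate to get $\norm{(\mathcal{T}Y_1)(t)-(\mathcal{T}Y_2)(t)}_F\leq T\,C(M)\sup_{s\in[0,T]}\norm{Y_1(s)-Y_2(s)}_F$, so shrinking $T$ further to make $T\,C(M)\leq 1/2$ yields a contraction factor $1/2$. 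Banach's fixed point theorem then produces a unique fixed point $Y\in C([0,T],B_M)$, which is the unique mild solution; the regularity $Y\in C^1([0,T],F)$ follows immediately because $t\mapsto G(Y(t))$ is continuous, so the integral equation shows $Y$ is differentiable with $Y_t=G(Y)$.

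For the stability estimate \eqref{eq:stabor} I would argue directly from the differential equation rather than re-running the fixed-point construction, since here the two solutions are assumed to stay in $B_M$ on the whole interval $[0,T]$. Writing $D(t)=\norm{Y_1(t)-Y_2(t)}_F$ and using $Y_1(t)-Y_2(t)=Y_1(0)-Y_2(0)+\int_0^t\bigl(G(Y_1(s))-G(Y_2(s))\bigr)\,ds$, the Lipschitz bound gives $D(t)\leq D(0)+C(M)\int_0^t D(s)\,ds$. Gr\"onwall's inequality then yields $D(t)\leq D(0)\,\e^{C(M)t}\leq D(0)\,\e^{C(M)T}$, and taking the supremum over $t\in[0,T]$ produces \eqref{eq:stabor} with the constant $C(M)$ suitably renamed to absorb the factor $\e^{C(M)T}$.

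The argument is entirely standard, so I do not expect a genuine obstacle; the only point requiring a little care is that the nonlocal terms $P$ and $Q$ sit inside $G$, but Proposition~\ref{prop:LipG} has already packaged all their analytic content into the single bound \eqref{eq:bdderG}, so the proof never needs to touch the convolution structure again. The mildly delicate bookkeeping issue is to ensure that the time $T$ is selected in terms of $\norm{Y_0}_F$ alone (through $M$ and $C(M)$) and not in a way that secretly depends on the unknown solution; this is handled by fixing $M=2\norm{Y_0}_F$ \emph{before} invoking the bounds, so that self-mapping and contraction are arranged on a ball whose radius is determined purely by the data.
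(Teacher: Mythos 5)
Your proposal is correct and follows exactly the route the paper takes: the paper proves existence and uniqueness by "the standard contraction argument" using Proposition~\ref{prop:LipG}, and obtains \eqref{eq:stabor} as "a direct application of Proposition~\ref{prop:LipG} and Gronwall's Lemma." You have simply written out in full the Picard--Lindel\"of and Gr\"onwall steps that the paper leaves implicit.
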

\begin{proof}
The stability result \eqref{eq:stabor} is a direct
application of Proposition \ref{prop:LipG} and
Gronwall's Lemma. 
\end{proof}
The system of differential equations \eqref{eq:sysban} 
in the Banach space
$F$ has an interesting geometric property: it
possesses an invariant. In fact, the following
quantity
\begin{equation*}
  I(Y):=U^2q^2+w^2-qh
\end{equation*}
is conserved along the exact solution of the
problem as we show now.  For any $Y(t)$ solution
of \eqref{eq:sysban}, we have
\begin{equation}
  \label{eq:compIpres}
  \begin{aligned}
    \sfrac{d}{dt}I(Y(t))&=
    2UU_tq^2+2U^2qq_t+2ww_t-q_th-qh_t=
    -2UQq^2+2U^2q\gamma w\\
    &+2w\Bigl(\sfrac{\gamma}{2}h+
    \bigl(\sfrac{3-2\gamma}{2}U^2-P\bigr)q\Bigr)-\gamma
    wh -q\bigl(-2QUq+(3U^2-2P)\bigr)=0.
  \end{aligned}
\end{equation}
Additionally, we have
\begin{lemma}\label{lem:presprop}
  The following properties are preserved
  (independently one of each other) by the
  governing equations \eqref{eq:sysban}
  \begin{enumerate}
  \item[(i)] $q$, $w$, $h$ belongs to $L^\infty(\R)$.
  \item[(ii)] $qh=U^2q^2+w^2$ (or $I(Y)=0$).
  \item[(iii)] $qh=U^2q^2+w^2$ (or $I(Y)=0$) and
    $q\geq0,\ h\geq0,\ q+h\geq c$ almost
    everywhere for some constant $c>0$.
  \item[(iv)]The functions $y,U$ and $H$ are
    differentiable and $y_\xi=q$, $U_\xi=w$ and
    $H_\xi=h$.
  \end{enumerate}
\end{lemma}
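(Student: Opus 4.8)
The plan is to exploit the semilinear structure of \eqref{eq:sysban}: for each fixed $\xi$, the triple $(q,w,h)(t,\xi)$ solves the linear system $q_t=\gamma w$, $w_t=\bigl(\sfrac{3-2\gamma}{2}U^2-P\bigr)q+\sfrac{\gamma}{2}h$, $h_t=-2QU\,q+(3U^2-2P)w$, whose coefficients $U$, $P$, $Q$ are, by Proposition \ref{prop:LipG}, bounded by a constant $C(M)$ on any interval where $\norm{Y}_F\leq M$ (recall $P,Q\in\Hone\subset\Linf$). This single observation drives all four parts, and in each case the mechanism is Gronwall's inequality applied pointwise in $\xi$, with a constant uniform in $\xi$.

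For (i), I would set $N(t,\xi):=q^2+w^2+h^2$ and estimate, using Cauchy--Schwarz and the above bounds, $\frac{d}{dt}N\leq C(M)N$. Gronwall then gives $N(t,\xi)\leq N(0,\xi)\e^{C(M)t}$, so a uniform-in-$\xi$ bound on $(q,w,h)$ at $t=0$ propagates, i.e.\ $q,w,h\in\Linf(\R)$ is preserved. Part (ii) is immediate from the computation \eqref{eq:compIpres}, which shows $\frac{d}{dt}I(Y(t,\xi))=0$ for almost every $\xi$; hence $I(Y(0,\xi))=0$ forces $I(Y(t,\xi))=0$ for all $t$.

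For (iii) the identity $I(Y)=0$ is preserved by (ii), so $qh=U^2q^2+w^2\geq0$ throughout; thus $q$ and $h$ always share the same sign, and $|w|\leq\sqrt{qh}\leq\sfrac12(q+h)$ whenever $q,h\geq0$. I would then run a first-exit-time argument on $p:=q+h$. A direct computation gives $p_t=-2QU\,q+(\gamma+3U^2-2P)w$, and on any interval where $q,h\geq0$ the bounds $0\leq q\leq p$ and $|w|\leq\sfrac12 p$ yield $\abs{p_t}\leq C(M)p$, hence $p_t\geq-C(M)p$. Starting from $p(0,\xi)\geq c>0$, Gronwall gives $p(t,\xi)\geq c\,\e^{-C(M)t}>0$ as long as $q,h\geq0$; since $qh\geq0$ together with $p>0$ forces $q,h\geq0$, a continuity argument shows this regime never breaks down, so $q+h$ stays bounded below by a positive constant and, consequently, $q\geq0$ and $h\geq0$.

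For (iv) I would introduce the differences $a:=y_\xi-q$, $b:=U_\xi-w$, $d:=H_\xi-h$. Differentiating \eqref{eq:sysban1}--\eqref{eq:sysban3} in $\xi$ and using $P_\xi=Qy_\xi$ together with the genuine expression for $Q_\xi$ (which differs from $w_t$ only through the replacement of $y_\xi$ by $q$) gives the linear homogeneous system $a_t=\gamma b$, $b_t=-Pa$, $d_t=(3U^2-2P)b-2QU\,a$; since the coefficients are bounded by $C(M)$ and $(a,b,d)(0)=0$, uniqueness (or Gronwall) forces $(a,b,d)\equiv0$, that is, $y_\xi=q$, $U_\xi=w$, $H_\xi=h$ are preserved. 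The main obstacle is the one preliminary point this argument presupposes, namely that $y$, $U$, $H$ remain \emph{genuinely} differentiable in $\xi$; I expect to obtain this by analysing the difference quotients $\bigl(y(t,\xi+\epsi)-y(t,\xi)\bigr)/\epsi$, and likewise for $U$ and $H$, which satisfy a small perturbation of the same linear system, and by showing via the continuous-dependence estimate \eqref{eq:stabor} that they converge, uniformly in the increment, to $q,w,h$ as $\epsi\to0$. Once differentiability is secured, identifying the limits with $q,w,h$ is exactly the linear-uniqueness argument above.
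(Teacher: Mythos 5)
Your parts (i)--(iii) follow the paper's proof essentially verbatim: (i) is Gronwall applied to the semilinear subsystem for $(q,w,h)$ with coefficients $U,P,Q$ bounded via Proposition \ref{prop:LipG}; (ii) is the invariance of $I$ from \eqref{eq:compIpres}; and (iii) is the same first-exit-time argument, the only cosmetic difference being that you run Gronwall on $q+h$ from below where the paper bounds $1/(q+h)$ from above -- both hinge on the same key inequality $\abs{w}\leq\sqrt{qh}\leq\sfrac12(q+h)$. Part (iv) is where you genuinely depart from the paper. The paper does not linearise at all: it invokes the existence--uniqueness result of \cite{horay07} for the coupled system \eqref{eq:syseq1} in $V\times H^1(\R)\times V$, observes that differentiating that system in $\xi$ produces \eqref{eq:sysban}, and concludes by uniqueness of solutions to \eqref{eq:sysban} that the components $(q,w,h)$ of the decoupled solution must coincide with $(y_\xi,U_\xi,H_\xi)$ of the \cite{horay07} solution. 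Your route -- setting $a=y_\xi-q$, $b=U_\xi-w$, $d=H_\xi-h$ and showing they solve the homogeneous linear system $a_t=\gamma b$, $b_t=-Pa$, $d_t=(3U^2-2P)b-2QUa$ with zero data -- is correct (note your sign convention for $Q_\xi$ agrees with \eqref{eq:genorgder}, i.e.\ $-Q_\xi$ is $w_t$ with $q$ replaced by $y_\xi$ in the $P$-term only, the boundary terms of the convolution always producing the decoupled variables $q,h$) and has the merit of being self-contained within the present framework, at the price of having to establish the preliminary differentiability yourself rather than importing it.

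That preliminary step is the one thin spot. The stability estimate \eqref{eq:stabor}, applied to $Y(t,\cdot)$ and its translate $Y(t,\cdot+\epsi)$ (which is again a solution, by translation covariance of $P$ and $Q$), only yields \emph{boundedness} of the difference quotients uniformly in $\epsi$, not their convergence to $(q,w,h)$. To close the argument you must write the equations satisfied by $D_\epsi y-q$, $D_\epsi U-w$, $D_\epsi H-h$ (where $D_\epsi f=(f(\cdot+\epsi)-f)/\epsi$), observe that $D_\epsi Q$ and $D_\epsi P$ are averages of $Q_\xi$, $P_\xi$ over $[\xi,\xi+\epsi]$, and control the resulting inhomogeneities in $L^2$ via continuity of translations, so that Gronwall gives convergence to zero in $L^2$ uniformly on $[0,T]$; measuring the undifferentiated components in $L^\infty$ would \emph{not} work here, since $v_0,w_0,h_0$ need not be uniformly continuous. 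With that caveat spelled out, your argument is a valid and arguably more transparent alternative to the paper's appeal to \cite{horay07}.
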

\begin{proof} We consider the short time solution
  given by Theorem~\ref{th:exist} on the time interval
  $[0,T]$. Given an initial data $Y_0$ which
  satisfies (i), then we use Gronwall's Lemma and
  the semi-linearity of
  \eqref{eq:sysban4}-\eqref{eq:sysban6} with
  respect to $q$, $w$, $h$ and obtain that
  \begin{equation*}
    \norm{q(t)}_{L^\infty}+\norm{w(t)}_{L^\infty}+\norm{h(t)}_{L^\infty}\leq C(\norm{q_0}_{L^\infty}+\norm{w_0}_{L^\infty}+\norm{h_0}_{L^\infty})
  \end{equation*}
  for a constant $C$ which only depends on
  $\norm{Y_0}_F$. We have already seen that
  $I(Y(t))$ is preserved, so that the condition
  (ii) is satisfied. We consider a fixed
  $\xi$. Let us denote $\bar T=\sup\{t\in[0,T]\ |\
  (q+h)(\bar t,\xi)>0\text{ for all } \bar
  t\in[0,t]\}$. Since
  \begin{equation}
    \label{eq:presI}
    qh=q^2U^2+w^2
  \end{equation}
  for all time $t\in[0,T]$, the product $qh$ is
  positive and therefore $q\geq0$ and $h\geq 0$
  for $t\in[0,\bar T]$. From the governing
  equations \eqref{eq:sysban}, we get
  \begin{align*}
    \sfrac{d}{dt}\big(\sfrac{1}{q+h}\big)\leq C\sfrac{\abs{w}+q}{(q+h)^2}
  \end{align*}
  for some constant $C$ which depends only on
  the norm of the initial data. Since $q$ and
  $h$ are positive, it follows from
  \eqref{eq:presI} that $\abs{w}\leq
  \sfrac12(q+h)$ and therefore
  $\sfrac{d}{dt}
  \big(\sfrac{1}{q+h}\big)\leq
  \sfrac{C}{q+h}$. Gronwall's inequality yields
  $\sfrac{1}{q+h}(t)\leq\sfrac{1}{q_0+h_0}\e^{C\bar
    T}$. Hence, we have $\bar T=T$ and
  \begin{equation*}
    \e^{-CT}(q_0+h_0)\leq (q+h)(t). 
  \end{equation*}
  In particular it implies that $\bar T=T$ and
  there exists a constant $c>0$ such that $q+h\geq c$
  for almost every $\xi$ and $t\in[0,T]$. Thus we
  have proved that $Y(t)$ satisfies the condition
  (iii). The last property follows from
  \cite{horay07}, where it is proved that there
  exists a unique solution to the system
  \begin{equation}
    \label{eq:syseq1}
    \begin{aligned}
      \zeta_t&=\gamma U,\\
      U_t&=-Q,\\
      H_t&=U^3-2PU
    \end{aligned}
  \end{equation}
  in the Banach space $V\times H^1(\R)\times V$, where
  $V=\{f\in L^\infty(\R)\ |\ f_\xi\in L^2(\R)\}$. By
  differentiation of \eqref{eq:syseq1}, we obtain
  that $(y,U,H,y_\xi,U_\xi,H_\xi)(t)$ satisfies
  \eqref{eq:sysban}. Therefore by uniqueness of
  the solutions, for an initial data $Y_0$ which
  satisfies (iii), we obtain that the property
  (iv) is satisfied.
\end{proof}

Having a closer look at Lemma~\ref{lem:presprop}, 
we now define the following set.

\begin{definition}
  \label{def:G} The set $\G$ consists of the
  elements $(y,U,H,q,w,h)\in F$ which satisfy the
  conditions (i), (iii) and (iv).
\end{definition}

As a consequence of Lemma \ref{lem:presprop}, the
set $\G$ is preserved by the system. For any
initial data in $\G$, the solution of
\eqref{eq:sysban} coincide with the solutions that
are obtained in \cite{horay07}. In particular, we
prove in the same way as in \cite{horay07} that
\begin{theorem}
  \label{th:globsol} For initial data in $\G$,
  the solutions to \eqref{eq:sysban} are global in
  time.
\end{theorem}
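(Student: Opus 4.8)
The plan is to upgrade the short-time existence of Theorem~\ref{th:exist} to global existence by the standard continuation argument. Since the existence time in Theorem~\ref{th:exist} depends only on $\norm{Y_0}_F$, it suffices to prove that, for any $Y_0\in\G$ and any $T^\ast>0$, the solution obeys an a priori bound $\sup_{t\in[0,T^\ast]}\norm{Y(t)}_F\leq C(T^\ast,Y_0)$ on its interval of existence; one can then restart the solution indefinitely with a uniform time step and cover all of $[0,\infty)$. Throughout I would use that, by Lemma~\ref{lem:presprop}, the solution remains in $\G$, so that the algebraic relation $qh=U^2q^2+w^2$ from \eqref{eq:presI}, together with $q\geq0$ and $h\geq0$, holds for all time.

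First I would extract the bounds that come for free from the energy. Since $h=H_\xi\geq0$ and $\lim_{\xi\to-\infty}H=0$, the function $H$ is nondecreasing and $\norm{H}_{\Linf}=H(+\infty)=\int_\R h\,d\xi=:E$. Using $H_t=U^3-2PU$ and the fact that $U(\xi)\to0$ as $\xi\to+\infty$ (itself a consequence of $U^2(\xi)=-\int_\xi^\infty 2Uw\,d\eta$ and the pointwise bound $2\abs{Uw}\leq h$ derived from $qh=U^2q^2+w^2$), one finds $\frac{d}{dt}E=\lim_{\xi\to+\infty}(U^3-2PU)=0$, so $E$ is conserved, exactly as in \cite{horay07}. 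The same relation gives $U^2q\leq h$ pointwise, whence $\norm{U}_{\Linf}^2\leq E$ and, inserting this into \eqref{eq:P3}--\eqref{eq:Q3}, $\norm{P}_{\Linf}+\norm{Q}_{\Linf}\leq C(E)$. Thus all coefficients driving the nonlinearity are controlled by the conserved energy alone.

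The crucial step is the a priori $\Linf$ bound on $q$, $w$ and $h$. Adding \eqref{eq:sysban4} and \eqref{eq:sysban6} and using $\abs{w}\leq\frac12(q+h)$ (from $w^2\leq qh\leq\frac14(q+h)^2$) together with the energy bounds on $U$, $P$, $Q$, I would derive a pointwise differential inequality $\frac{d}{dt}(q+h)\leq C(E)(q+h)$; Gronwall's inequality then yields $\norm{q(t)}_{\Linf}+\norm{h(t)}_{\Linf}\leq(\norm{q_0}_{\Linf}+\norm{h_0}_{\Linf})\e^{C(E)t}$, and hence an $\Linf$ bound on $w$ as well. The essential point is that the rate $C(E)$ depends only on the conserved energy, so these norms stay finite on every bounded time interval.

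With the $\Linf$ norms of $q$, $w$, $h$, $U$, $H$, $P$, $Q$ under control on $[0,T^\ast]$, only the $\Ltwo$-part of the norm remains. The bound on $\norm{\zeta}_{\Linf}$ is immediate from $\zeta_t=\gamma U$, and for $\norm{U}_{\Ltwo}^2+\norm{v}_{\Ltwo}^2+\norm{w}_{\Ltwo}^2+\norm{h}_{\Ltwo}^2$ I would differentiate in time and estimate each term of \eqref{eq:sysban}, writing $q=1+v$ and using $\norm{U^2}_{\Ltwo}\leq\norm{U}_{\Linf}\norm{U}_{\Ltwo}$ together with the $\Ltwo$-control of $P$ and $Q$ coming from the Young inequalities \eqref{eq:young1} as in Proposition~\ref{prop:LipG}, whose coefficients involve only the $\Linf$ quantities already bounded on $[0,T^\ast]$. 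This closes into a linear Gronwall inequality and yields the desired bound on $\norm{Y(t)}_F$. The main obstacle is the third step: the naive estimate from Lemma~\ref{lem:presprop}(i) has a constant depending on the a priori growing full norm $\norm{Y_0}_F$, and the whole argument hinges on replacing it by a rate depending only on the conserved $E$ — which is precisely what the relation $qh=U^2q^2+w^2$, and the resulting control of $P$, $Q$ and $U$ by $E$, make possible.
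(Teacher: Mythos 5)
Your proposal is correct and follows essentially the same route as the paper, which simply defers to the argument of \cite{horay07}: conservation of the total energy $E=\int_\R h\,d\xi$, the pointwise consequences of $qh=U^2q^2+w^2$ with $q,h\geq0$ (namely $2\abs{Uw}\leq h$, $U^2q\leq h$, $\abs{w}\leq\frac12(q+h)$) giving $\norm{U}_{\Linf}^2\leq E$ and $\norm{P}_{\Linf}+\norm{Q}_{\Linf}\leq C(E)$, then Gronwall on $q+h$ and finally on the $\Ltwo$ part of $\norm{Y}_F$, combined with the continuation criterion from Theorem~\ref{th:exist}. You have in effect supplied the details that the paper omits by citation.
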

We denote by $S_t$ the semigroup of solutions to
\eqref{eq:sysban} in $\G$. Thus, we slightly abuse
the notations, as $S_t$ was already introduced in
the previous section, see \eqref{eq:defTt}, but,
as we explained, the two semigroups are
essentially the same. Note that global existence
can only be established for initial data in $\G$
and do not hold in general for initial data in
$F$.

\section{Decay at infinity}
\label{sect-decay}

The terms $P$ and $Q$, as given by \eqref{eq:P3}
and \eqref{eq:Q3} which appear in the governing
equations \eqref{eq:sysban} are global in the
sense that they are not compactly supported even
if $Y$ is. Consequently the set of compactly
supported functions is not preserved by the
system. However, we identify in this section
decay properties which are preserved by the
system. We denote by $F^e$, the subspace of $F$ of
functions with exponential decay defined as
\begin{equation*}
  F^e=\{Y\in F\ |\ q,w,h\in L^\infty(\R),\quad 
  \e^{\abs{\xi}}U,\,\e^{\abs{\xi}}w\in L^2(\Real),\, \e^{\abs{\xi}}h\,\in L^1(\R)\}.
\end{equation*}
We define the following norm on $F^e$
\begin{multline*}
  \norm{Y}_{F^e}=\norm{Y}_F+\norm{q}_{L^\infty}+\norm{w}_{L^\infty}+\norm{h}_{L^\infty}\\
  +\snorm{\e^{\frac{\abs{\xi}}2}U}_{L^2}+
  \snorm{\e^{\frac{\abs{\xi}}2}w}_{L^2}+\norm{\e^{\abs{\xi}}h}_{L^1}.
\end{multline*}
Given $\alpha>1$, we denote by $F^{\alpha}$, the
subspace of $F$ of functions with polynomial decay
defined as
\begin{multline*}
  F^\alpha=\{Y\in F\ |\ q,w,h\in
  L^\infty(\R),\quad
  (1+\abs{\xi})^{\frac{\alpha}2}
  U,\,(1+\abs{\xi})^{\frac{\alpha}2} w\in L^2(\Real),\\
  (1+\abs{\xi})^\alpha h\,\in L^1(\R)\}.
\end{multline*}
We define the following norm on $F^\alpha$
\begin{multline*}
  \norm{Y}_{F^\alpha}=\norm{Y}_F+\norm{q}_{L^\infty}+\norm{w}_{L^\infty}+\norm{h}_{L^\infty}\\
  +\snorm{(1+\abs{\xi})^{\frac{\alpha}2}U}_{L^2}+
  \snorm{(1+\abs{\xi})^{\frac{\alpha}2}w}_{L^2}+\snorm{(1+\abs{\xi})^\alpha
    h}_{L^1}.
\end{multline*}
\begin{theorem}
  \label{th:decay} The spaces $F^e$ and $F^\alpha$
  are preserved by the flow of \eqref{eq:sysban}. Considering the short-time
  solutions given by Theorem \ref{th:exist}, we
  have that
  \begin{enumerate}
  \item[(i)] If $Y_0\in F^e$, then
    $\sup_{t\in[0,T]}\norm{Y(t,\cdot)}_{F^e}\leq
    C$,\\
  \item[(ii)] If $Y_0\in F^\alpha$, then
    $\sup_{t\in[0,T]}\norm{Y(t,\cdot)}_{F^\alpha}\leq
    C$,
  \end{enumerate}
  for a constant $C$ which only depends on $T$ and
  $\norm{Y_0}_{F^e}$ (case (i)) or $T$ and
  $\norm{Y_0}_{F^\alpha}$ (case (ii)).
\end{theorem}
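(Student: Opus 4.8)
The plan is to run a Gronwall argument on a single combined weighted quantity, the only substantive work being to show that the nonlocal terms $P$ and $Q$ inherit the decay of the data. Throughout I fix the short-time solution of Theorem \ref{th:exist} on $[0,T]$ and record two families of uniform bounds available on this interval: first $\sup_{[0,T]}\norm{Y}_F\le M$, since the solution lies in $C^1([0,T],F)$; second the $\Linf$ bounds $\snorm{q}_{\Linf}+\snorm{w}_{\Linf}+\snorm{h}_{\Linf}\le M$, which follow from the preservation of property (i) in Lemma \ref{lem:presprop} together with the Gronwall estimate in its proof (enlarging $M$ if necessary). All constants below are written $C(M)$. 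I treat the exponential case first and set $\phi(\xi)=\e^{\abs{\xi}/2}$, so that the quantities to be controlled are $a:=\snorm{\phi U}_{\Ltwo}$, $b:=\snorm{\phi w}_{\Ltwo}$ and $c:=\snorm{\e^{\abs\xi}h}_{L^1}$.

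The key step is a weighted estimate on $P$ and $Q$. Since $y(\xi)=\xi+\zeta(\xi)$ with $\zeta\in\Linf$, the kernel obeys $\abs{\e^{-\sgn(\xi-\eta)(y(\xi)-y(\eta))}}\le\e^{2\snorm{\zeta}_{\Linf}}\e^{-\abs{\xi-\eta}}\le C(M)\e^{-\abs{\xi-\eta}}$, so \eqref{eq:Q3} gives the pointwise bound $\abs{Q(\xi)}\le C(M)\int_\R\e^{-\abs{\xi-\eta}}\abs{S(\eta)}\,d\eta$ with $S=\sfrac{3-2\gamma}{2}U^2q+\sfrac{\gamma}{2}h$, and likewise for $P$. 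The weight is then transferred onto the source by the elementary inequality $\phi(\xi)\e^{-\abs{\xi-\eta}}\le\phi(\eta)\e^{-\abs{\xi-\eta}/2}$, which reduces the estimate to a convolution with $\e^{-\abs{\cdot}/2}\in L^1$. Young's inequality \eqref{eq:young1} yields $\snorm{\phi Q}_{\Ltwo}+\snorm{\phi P}_{\Ltwo}\le C(M)\snorm{\phi S}_{\Ltwo}$, and since $\snorm{\phi U^2q}_{\Ltwo}\le\snorm{Uq}_{\Linf}a\le C(M)a$ and $\snorm{\phi h}_{\Ltwo}^2=\int_\R\e^{\abs{\xi}}h^2\,d\xi\le\snorm{h}_{\Linf}c\le C(M)c$, I obtain $\snorm{\phi P}_{\Ltwo}+\snorm{\phi Q}_{\Ltwo}\le C(M)(a+\sqrt c)$.

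With these bounds in hand I differentiate the three weighted quantities along the flow using \eqref{eq:sysban2}, \eqref{eq:sysban5} and \eqref{eq:sysban6}. Working with $a^2$, $b^2$ and $c$ and applying Cauchy--Schwarz and Young's inequality everywhere (for instance $\frac{d}{dt}a^2=-2\int\phi^2UQ\,d\xi\le 2a\snorm{\phi Q}_{\Ltwo}\le C(M)(a^2+c)$, while every term in $\frac{d}{dt}\snorm{\e^{\abs\xi}h}_{L^1}\le\int\e^{\abs\xi}\abs{h_t}\,d\xi$ is dominated by products such as $\snorm{\phi Q}_{\Ltwo}a$, $ab$, $\snorm{\phi P}_{\Ltwo}b$), the apparent quadratic growth collapses: setting $D:=a^2+b^2+c$ I find $D'\le C(M)D$. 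Gronwall's inequality then gives $D(t)\le D(0)\e^{C(M)T}$ on $[0,T]$, which together with the $F$-norm and $\Linf$ bounds yields $\sup_{[0,T]}\norm{Y}_{F^e}\le C$; in particular $F^e$ is preserved. The polynomial case is identical once $\phi$ is replaced by $(1+\abs{\xi})^{\alpha/2}$ and $\e^{\abs\xi}$ by $(1+\abs{\xi})^\alpha$: the weight-transfer step becomes Peetre's inequality $(1+\abs\xi)^{\alpha/2}\le 2^{\alpha/2}(1+\abs\eta)^{\alpha/2}(1+\abs{\xi-\eta})^{\alpha/2}$, and the excess factor is absorbed because $(1+\abs{\xi-\eta})^{\alpha/2}\e^{-\abs{\xi-\eta}}$ is integrable.

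I expect the main obstacle to be the weighted control of the nonlocal terms $P$ and $Q$ — making precise that the exponential decay of the kernel lets one move the weight from the evaluation point $\xi$ onto the source variable $\eta$ — and, subordinate to it, the bookkeeping needed to see that the naively quadratic differential inequalities close linearly in $D=a^2+b^2+c$. A minor technical point, which I would handle by truncating the weights to $\min(\phi,n)$ and passing to the limit by monotone convergence, is the justification of differentiating the unbounded-weight norms along the $C^1([0,T],F)$ solution.
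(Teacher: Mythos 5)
Your argument is correct and rests on the same central idea as the paper's proof: transfer the weight from the evaluation point of $P$ and $Q$ onto the source variable through the exponentially decaying kernel, using $\abs{\xi}\leq\abs{\eta}+\abs{\xi-\eta}$ (respectively Peetre's inequality in the polynomial case), and then close a Gronwall inequality on the weighted quantity $D=a^2+b^2+c=\norm{\e^{\abs{\xi}}U^2}_{L^1}+\norm{\e^{\abs{\xi}}w^2}_{L^1}+\norm{\e^{\abs{\xi}}h}_{L^1}$, which is exactly the paper's $L(t)$. Where you genuinely differ is in how the bilinear terms $QU$, $Pw$, $Qh$, etc.\ are estimated. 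The paper proves the weighted $L^\infty$ bound $\norm{\e^{\abs{\xi}}Q}_{L^\infty}\leq CL(t)$ and pairs it with the \emph{unweighted} $L^1$ norms of $U$, $w$, $h$; this forces a preliminary Gronwall argument establishing apriori $L^1$ bounds on $U,w,h$ (via \eqref{eq:estL1Q}--\eqref{eq:L1bound}), and in the polynomial case an additional $L^1$ estimate on $(1+\abs{\xi})^\alpha Q$ combined with the splitting $2QU\leq Q^2+U^2$. You instead split the weight evenly between the two factors, proving $\snorm{\e^{\abs{\xi}/2}Q}_{L^2}\leq C(M)(a+\sqrt{c})$ and pairing weighted $L^2$ norms throughout; this removes the preliminary $L^1$ step entirely and treats the exponential and polynomial cases by one uniform mechanism. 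I checked the individual differential inequalities you indicate and they do close linearly in $D$ as claimed; the constants depend only on $\sup_{[0,T]}\norm{Y}_F$ and the $L^\infty$ bounds on $q,w,h$, both of which are controlled by $T$ and $\norm{Y_0}_{F^e}$ via Theorem \ref{th:exist} and Lemma \ref{lem:presprop}, so the dependence of the final constant is as stated in the theorem. Your closing remark about truncating the weight to justify differentiating the a priori possibly infinite weighted norms (and to know $D(t)<\infty$ before Gronwall applies) is the right fix, and the same care is implicitly required in the paper's integral formulation.
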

\begin{proof} Let us prove the case (i). First, we
  establish $L^1$ bounds on the solutions. By
  applying the Cauchy--Schwartz inequality, we get
  \begin{equation*}
    \int_\Real \abs{U_0(\xi)}\,d\xi=
    \int_\Real \e^{-\abs{\sfrac{\xi}2}}
\e^{\abs{\sfrac{\xi}2}}\abs{U_0(\xi)}\,
d\xi\leq\sqrt{2}\norm{\e^{\abs{\xi}}U_0^2(\xi)}_{L^1}^{\sfrac12},
  \end{equation*}
  which implies that $U_0\in L^1(\R)$ and
  $\norm{U_0}_{L^1}\leq C$ for some constant $C$
  which depends only on
  $\norm{\e^{\abs{\xi}}U_0^2(\xi)}_{L^1}$. Similarly
  we get that $w_0\in L^1(\R)$ and
  $\norm{w_0}_{L^1}\leq C$ for some constant $C$
  which depends only on
  $\norm{\e^{\abs{\xi}}w_0^2(\xi)}_{L^1}$. We
  denote generically by $C$ such a constant, which
  depends only on $T$ and $\norm{Y_0}_{F^e}$.
  From Theorem \ref{th:exist} and Lemma
  \ref{lem:presprop}, we get that
  \begin{equation*}
    \norm{q(t,\cdot)}_{L^\infty}+\norm{w(t,\cdot)}_{L^\infty}+\norm{h(t,\cdot)}_{L^\infty}\leq C.
  \end{equation*}
  By following the same argument as in the proof
  of Proposition \ref{prop:LipG}, from \eqref{eq:Q1def}
  to \eqref{eq:young1}, but, instead, using the
  Young inequality $\norm{\kappa\star
    r}_{L^1}\leq\norm{\kappa}_{L^1}\norm{r}_{L^1}$,
  we obtain that
  \begin{equation}
    \label{eq:estL1Q}
    \norm{Q(t,\cdot)}_{L^1}\leq C(\norm{h(t,\cdot)}_{L^1}+1)
  \end{equation}
  for a constant $C$ which depends only on
  $\norm{Y(t)}_{F^e}$ and, therefore, only on
  $\norm{Y_0}_{F^e}$ and $T$. The same estimate
  holds for $P$, that is,
  \begin{equation}
    \label{eq:estL1P}
    \norm{P(t,\cdot)}_{L^1}\leq C(\norm{h(t,\cdot)}_{L^1}+1).
  \end{equation}
  Let us denote
  \begin{equation*}
    J(t):=\norm{U(t,\cdot)}_{L^1}+
    \norm{w(t,\cdot)}_{L^1}+\norm{h(t,\cdot)}_{L^1}.
  \end{equation*}
  From the governing equations \eqref{eq:sysban},
  after using \eqref{eq:estL1Q} and
  \eqref{eq:estL1P}, we get
  \begin{equation*}
    J(t)\leq J(0)+C+C\int_{0}^t J(\tau)\,d\tau.
  \end{equation*}
  Hence, by applying Gronwall's Lemma, we get
  that, for $t\in[0,T]$,
  \begin{equation}
    \label{eq:L1bound}
    J(t)=\norm{U(t,\cdot)}_{L^1}+
    \norm{w(t,\cdot)}_{L^1}+\norm{h(t,\cdot)}_{L^1}\leq C
  \end{equation}
  for another constant $C$. Let $L(t)$ denotes
  \begin{equation}
    \label{eq:defI}
    L(t)=\norm{\e^{\abs{\xi}}U^2(t,\cdot)}_{L^1}
    +\norm{\e^{\abs{\xi}}w^2(t,\cdot)}_{L^1}+
    \norm{\e^{\abs{\xi}}h(t,\cdot)}_{L^1}.
  \end{equation}
  From the definition of $Q$, we get that
  \begin{equation}
    \label{eq:grosestQ}
    Q(t,\xi)\leq C \int_\Real \e^{-\abs{\xi-\eta}}(U^2+h)(t,\eta)\,d\eta
  \end{equation}
  so that
  \begin{align*}
    \e^{\abs{\xi}}Q(t,\xi)&\leq C \int_\Real \e^{\abs{\xi}}\e^{-\abs{\xi-\eta}}\e^{-\abs{\eta}}\e^{\abs{\eta}}(U^2+h)(t,\eta)\,d\eta\\
    &\leq C L(t)
  \end{align*}
  because $\abs{\xi}-\abs{\eta}\leq\abs{\xi-\eta}$
  and therefore
  \begin{equation}
    \label{eq:decexpQ}
    \norm{\e^{\abs{\xi}} Q(t,\cdot)}_{L^\infty}\leq C L(t).
  \end{equation}
  Similarly, we get that 
  \begin{equation}
    \label{eq:decexpP}
    \norm{\e^{\abs{\xi}}
      P(t,\cdot)}_{L^\infty}\leq C L(t).
  \end{equation}
  From the governing equations \eqref{eq:sysban},
  we get that
  \begin{align}
    \notag
    \norm{\e^{\abs{\xi}}U^2(t,\xi)}_{L^1}&\leq
    \norm{\e^{\abs{\xi}}U_0^2}_{L^1}+
    \int_0^t\norm{2\e^{\abs{\xi}}QU(\tau,\cdot)}_{L^1}
    \,d\tau\\
    \notag &\leq
    \norm{\e^{\abs{\xi}}U_0^2}_{L^1}+2\int_0^t
    \norm{\e^{\abs{\xi}}Q(\tau,\cdot)}_{L^\infty}
    \norm{U(\tau,\cdot)}_{L^1}\,d\tau\\
    \label{eq:estidecu2}
    &\leq
    \norm{\e^{\abs{\xi}}U_0^2}_{L^1}+C\int_0^t
    I(\tau)\,d\tau,
  \end{align}
  by using the $L^1$ apriori estimates 
  \eqref{eq:L1bound} and \eqref{eq:decexpQ}. From
  \eqref{eq:sysban}, we also obtain that
  \begin{align*}
    \norm{\e^{\abs{\xi}}h(t,\xi)}_{L^1}&\leq
    \norm{\e^{\abs{\xi}}h_0}_{L^1}+\int_0^t
    (2\norm{\e^{\abs{\xi}}Q(\tau,\cdot)}_{L^\infty}
    \norm{U(\tau,\cdot)}_{L^1})\,d\tau+\\
    &\quad \int_0^t
    (C\norm{\e^{\abs{\xi}}U^2(\tau,\cdot)}_{L^1}+
    \norm{\e^{\abs{\xi}}P(\tau,\cdot)}_{L^\infty}
    \norm{w(\tau,\cdot)}_{L^1})\,d\tau
  \end{align*}
  which, after using the $L^1$ estimates
  \eqref{eq:L1bound}, \eqref{eq:decexpQ} and
  \eqref{eq:decexpP}, yields
  \begin{equation}
    \label{eq:estidech}
    \norm{\e^{\abs{\xi}}h(t,\xi)}_{L^1}\leq\norm{\e^{\abs{\xi}}h_0}_{L^1}+C+
    C\int_0^tI(\tau)\,d\tau.
  \end{equation}
  Similarly we get that
  \begin{align}
    \notag \norm{\e^{\abs{\xi}}w^2(t,\xi)}_{L^1}&
    \leq\norm{\e^{\abs{\xi}}w_0^2}_{L^1}+
    \int_0^t\sfrac\gamma2\norm{\e^{\abs{\xi}}h(\tau,\xi)}_{L^1}
    \,d\tau\\
    \notag &\quad
    +\int_{0}^t(C\norm{\e^{\abs{\xi}}U^2(\tau,\cdot)}_{L^1}+
    C\norm{\e^{\abs{\xi}}P(\tau,\cdot)}_{L^\infty}
    \norm{w(\tau,\cdot)}_{L^1})\,d\tau.\\
    \label{eq:estidecw2}
    &\leq\norm{\e^{\abs{\xi}}w_0^2}_{L^1}+
    C+C\int_0^tI(\tau)\,d\tau.
  \end{align}
  After summing \eqref{eq:estidecu2},
  \eqref{eq:estidech} and \eqref{eq:estidecw2}, we
  get $L(t)\leq L(0)+C+C\int_{0}^tL(\tau)\,d\tau$
  and the result follows by applying Gronwall's
  inequality. We now turn to case (ii). We
  introduce the quantity
  \begin{equation*}
    K(t)=\snorm{(1+\abs{\xi})^\alpha U^2(t,\cdot)}_{L^1}
    +\snorm{(1+\abs{\xi})^\alpha w^2(t,\cdot)}_{L^1}+
    \snorm{(1+\abs{\xi})^\alpha h(t,\cdot)}_{L^1}.
  \end{equation*}
  From \eqref{eq:grosestQ}, we get
  \begin{equation}
    \label{eq:estxialQinf}
    (1+\abs{\xi})^\alpha Q\leq
    C\int_\Real(1+\abs{\xi})^\alpha\e^{-\abs{\xi-\eta}}(1+\abs{\eta})^{-\alpha}(1+\abs{\eta})^{\alpha}(U^2q+h)\,d\eta.
  \end{equation}
  Since
  $\abs{\xi}\leq\abs{\xi-\eta}+\abs{\eta}\leq(1+\abs{\xi-\eta})(1+\abs{\eta})$,
  we have $(1+\abs{\xi})\leq
  2(1+\abs{\xi-\eta})(1+\abs{\eta})$ and
  \begin{equation}
    \label{eq:ineqal}
    (1+\abs{\xi})^\alpha\leq
    2^\alpha(1+\abs{\xi-\eta})^\alpha(1+\abs{\eta})^\alpha.     
  \end{equation}
  Then, it follows from \eqref{eq:estxialQinf}
  that
  \begin{align}
    \notag
    (1+\abs{\xi})^\alpha Q&\leq
    C\int_\Real\e^{-\abs{\xi-\eta}}(1+\abs{\xi-\eta})^{\alpha}(1+\abs{\eta})^{\alpha}(U^2q+h)\,d\eta\\
    \label{eq:estQKt}
    &\leq
    C\norm{\e^{-z}(1+\abs{z})^\alpha}_{L^\infty}K(t)\leq
    CK(t)
  \end{align}
  so that $\norm{(1+\abs{\xi})^\alpha
    Q}_{L^\infty}\leq CK(t)$. We have to
  estimate
  $\norm{(1+\abs{\xi})^{\alpha}Q}_{L^1}$. We have
  \begin{align}
    \notag
    \norm{(1+\abs{\xi})^{\alpha}Q}_{L^1}&\leq\int_{\Real^2}(1+\abs{\xi})^\alpha\e^{-\abs{\xi-\eta}}(1+\abs{\eta})^{-\alpha}(1+\abs{\eta})^{\alpha}(U^2q+h)\,d\eta
    d\xi\\
    \notag&=\int_{\Real^2}(1+\abs{\eta+z})^\alpha\e^{-\abs{z}}(1+\abs{\eta})^{-\alpha}(1+\abs{\eta})^{\alpha}(U^2q+h)\,d\eta dz\\
    \notag&\leq
    2^\alpha\int_{\Real^2}(1+\abs{z})^\alpha\e^{-\abs{z}}(1+\abs{\eta})^{\alpha}(U^2q+h)\,d\eta
    dz\quad \text{(by \eqref{eq:ineqal})}\\
    \notag&\leq
    C\int_{\Real}(1+\abs{z})^\alpha\e^{-\abs{z}}\,dz\int_\Real(1+\abs{\eta})^{\alpha}(U^2+h)\,d\eta\\
    \label{eq:bdQusq}&\leq CK(t).
  \end{align}
  Hence,
  \begin{equation}
    \label{eq:bdalpQ}
    \norm{(1+\abs{\xi})^{\alpha}Q}_{L^1\cap L^{\infty}}\leq CK(t)
  \end{equation}
  and the same bound holds for $P$. From the
  governing equations, we obtain
    \begin{align*}
      \notag \norm{(1+\abs{\xi})^\alpha
        U^2(t,\xi)}_{L^1}&\leq
      \norm{(1+\abs{\xi})^\alpha U_0^2}_{L^1}+
      \int_0^t\norm{2(1+\abs{\xi})^\alpha
        QU(\tau,\cdot)}_{L^1}
      \,d\tau\\
      \notag &\leq \norm{(1+\abs{\xi})^\alpha
        U_0^2}_{L^1}+2\int_0^t
      \norm{(1+\abs{\xi})^\alpha Q^2(\tau,\cdot)}_{L^1}\,d\tau\\
      &\quad+2\int_0^t\norm{(1+\abs{\xi})^\alpha U^2(\tau,\cdot)}_{L^1}\,d\tau\\
    \label{eq:estidecu2}
    &\leq
    \norm{(1+\abs{\xi})^\alpha U_0^2}_{L^1}+C\int_0^t
    K(\tau)\,d\tau,
  \end{align*}
  by \eqref{eq:bdalpQ}, as
  $\norm{Q}_{L^\infty}\leq C$, see
  \eqref{eq:bdderPQ}. In a similar way, one proves
  that
  \begin{equation*}
    \norm{(1+\abs{\xi})^\alpha
      w^2(t,\xi)}_{L^1}\leq\norm{(1+\abs{\xi})^\alpha w_0^2}_{L^1}+C\int_0^t
    K(\tau)\,d\tau
  \end{equation*}
  and 
  \begin{equation*}
    \norm{(1+\abs{\xi})^\alpha
      h(t,\xi)}_{L^1}\leq\norm{(1+\abs{\xi})^\alpha h_0}_{L^1}+C\int_0^t
    K(\tau)\,d\tau
  \end{equation*}
  so that
  \begin{equation*}
    K(t)\leq K(0)+C\int_0^tK(\tau)\,d\tau 
  \end{equation*}
  and the result follows from Gronwall's Lemma.
\end{proof}
For later use, we note that, in this proof, we
have established that
\begin{equation}
  \label{eq:estdecPQ}
  \norm{\e^{\abs{\xi}}Q}_{L^\infty}+
  \norm{\e^{\abs{\xi}}P}_{L^\infty}\leq C(\norm{Y}_{F^e})
\end{equation}
and 
\begin{equation}
  \label{eq:estdecPQ2}
  \norm{(1+\abs{\xi})^\alpha Q}_{L^\infty\cap L^1}+
  \norm{(1+\abs{\xi})^\alpha P}_{L^\infty\cap L^1}\leq C(\norm{Y}_{F^\alpha})
\end{equation}
for some given increasing function $C$, see
\eqref{eq:decexpQ}, \eqref{eq:decexpP} and
\eqref{eq:bdalpQ}.

\section{Semi-Discretisation in space}\label{sect-semi}

The first step towards a discretisation of
\eqref{eq:sysban} is to consider
step-functions. We consider an equally-spaced grid
on the real line defined by the points
$$
\xi_i=i\Delta\xi,
$$
where $\Delta\xi$ is the grid step and
$i=0,\pm1,\pm2,\ldots$. We introduce the space
\begin{multline*}
  F_{\Delta\xi}=\{ Y\in F\,:\:\text{each component of }
  Y\text{ consists of }\\\text{piecewise constant
    functions in each intervals
    $[\xi_i,\xi_{i+1})$}\}.
\end{multline*}
The system \eqref{eq:sysban} does not preserve the
set $F_{\Delta\xi}$ of piecewise constant
function. Thus, we define
\begin{equation}\label{eq:Pbar}
P_{\Delta\xi}( Y)(\xi)=\sum_{i=-\infty}^\infty 
P( Y)(\xi_i)
\chi_{[\xi_i,\xi_{i+1})}(\xi),
\end{equation}
\begin{equation}\label{eq:Qbar}
  Q_{\Delta\xi}( Y)(\xi)=\sum_{i=-\infty}^\infty 
  Q( Y)(\xi_i)
  \chi_{[\xi_i,\xi_{i+1})}(\xi)
\end{equation}
and consider a second system of differential
equations
\begin{equation}
\label{eq:sysbar}
\begin{aligned}
  \zeta_t&=\gamma U\\
  U_t&=-Q_{\Delta\xi}\\
  H_t&= U^3-2P_{\Delta\xi} U\\
  q_t&=\gamma  w\\
  w_t&=\sfrac{\gamma}{2} h+
  \bigl(\sfrac{3-2\gamma}{2} U^2-P_{\Delta\xi}\bigr) q\\
  h_t&=-2Q_{\Delta\xi} U q+ \bigl(3
  U^2-2P_{\Delta\xi}\bigr) w,
\end{aligned}
\end{equation}
or, shortly,
$$
 Y_t(t)=G_{\Delta\xi}( Y(t)).
$$
Like in the preceding section, we show that this
system of differential equations possesses a
short-time solution, an invariant and that it
solution converges to the solution of
\eqref{eq:sysban} as $\Delta\xi\to0$. In the
next theorem we prove, by a contraction argument,
the short-time existence of solutions to
\eqref{eq:sysbar}.
\begin{theorem}\label{th:existbar}
  For any initial value $ Y_0=( y_0, U_0, H_0,
  q_0, w_0, h_0)\in F$, there exists a time $T$,
  only depending on $\norm{Y_0}_F$, such that the
  system of differential equations
  \eqref{eq:sysbar} admits a unique solution in
  $C^1([0,T], F)$.
\end{theorem}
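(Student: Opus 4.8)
The plan is to mirror exactly the contraction-argument strategy that underlies Theorem~\ref{th:exist}, since system \eqref{eq:sysbar} differs from \eqref{eq:sysban} only in that the nonlocal terms $P$ and $Q$ are replaced by their piecewise-constant sampled versions $P_{\Delta\xi}$ and $Q_{\Delta\xi}$. The whole proof reduces to showing that $G_{\Delta\xi}$ is locally Lipschitz (indeed $C^1$ would suffice, but Lipschitz on balls is all we need) as a map $F\to F$, and then invoking the standard Picard--Lindelöf fixed-point argument in the Banach space $C([0,T],F)$ with $T$ chosen small enough in terms of $\norm{Y_0}_F$.

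First I would establish the analogue of the key estimate in Proposition~\ref{prop:LipG}, namely that for every $M>0$ there is a constant $C(M)$, \emph{independent of $\Delta\xi$}, such that
\begin{equation*}
  \norm{P_{\Delta\xi}(Y)}_{H^1}+\norm{Q_{\Delta\xi}(Y)}_{H^1}\leq C(M)
  \quad\text{and}\quad
  \norm{G_{\Delta\xi}(Y)}_F\leq C(M)
\end{equation*}
for all $Y\in B_M$. The point is that sampling is a harmless operation at the level of these bounds: by definition \eqref{eq:Pbar}--\eqref{eq:Qbar}, $P_{\Delta\xi}$ takes the values $P(Y)(\xi_i)$, and since Proposition~\ref{prop:LipG} already gives $P(Y)\in H^1(\R)\hookrightarrow C_b(\R)$ with $\norm{P(Y)}_{L^\infty}\leq C(M)$, the sampled function inherits the $L^\infty$ bound pointwise, and its $L^2$ norm is controlled the same way. (One does \emph{not} claim an $H^1$ bound on the step function itself, which is discontinuous; rather, $P_{\Delta\xi}$ enters \eqref{eq:sysbar} only as a bounded multiplier, so the relevant bound is in $L^\infty\cap L^2$, which follows from $\norm{P(Y)}_{H^1}\leq C(M)$.) With these at hand, the semilinear structure of \eqref{eq:sysban4}--\eqref{eq:sysban6} in $(q,w,h)$ carries over verbatim, so $G_{\Delta\xi}$ maps $B_M$ into $B_{C(M)}$.

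Next I would prove the Lipschitz estimate $\norm{G_{\Delta\xi}(Y_1)-G_{\Delta\xi}(Y_2)}_F\leq C(M)\norm{Y_1-Y_2}_F$ on $B_M$. For the local terms (the products $U^2q$, $\gamma U$, etc.) this is the ordinary product-rule Lipschitz bound used in Lemma~\ref{lem:Blip}. For the nonlocal terms one writes
\begin{equation*}
  P_{\Delta\xi}(Y_1)-P_{\Delta\xi}(Y_2)=\sum_i\bigl(P(Y_1)-P(Y_2)\bigr)(\xi_i)\,\chi_{[\xi_i,\xi_{i+1})},
\end{equation*}
and since Proposition~\ref{prop:LipG} provides $\norm{P(Y_1)-P(Y_2)}_{L^\infty}\leq C(M)\norm{Y_1-Y_2}_F$ (from the bound on $\partial P/\partial Y$ together with $H^1\hookrightarrow L^\infty$), the sampled difference is bounded pointwise by the same quantity, hence in $L^\infty$ and in $L^2$. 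Therefore $G_{\Delta\xi}$ is Lipschitz on $B_M$ with a constant depending only on $M$.

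The conclusion is then completely standard: on the ball $B_{2\norm{Y_0}_F}$, the map
\begin{equation*}
  (\mathcal{T}Y)(t)=Y_0+\int_0^t G_{\Delta\xi}(Y(\tau))\,d\tau
\end{equation*}
is a contraction on a closed ball of $C([0,T],F)$ once $T$ is small enough that $T\,C(M)<1$ and the ball is preserved; its unique fixed point is the desired $C^1([0,T],F)$ solution, and $T$ depends only on $\norm{Y_0}_F$. \textbf{The main obstacle} to watch for is precisely the $\Delta\xi$-uniformity: one must make sure every constant coming out of the sampling step is controlled by $\norm{P(Y)}_{L^\infty}$ and $\norm{P(Y)-P(Y')}_{L^\infty}$ rather than by any $H^1$ seminorm of the step function (which would blow up as $\Delta\xi\to0$), because this uniformity is exactly what the later convergence analysis in Section~\ref{sect-full} will need. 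Since the sampling operator is an $L^\infty\to L^\infty$ contraction in the relevant sense, this is achievable, but it is the one place where naively transcribing the proof of Theorem~\ref{th:exist} could introduce a hidden $\Delta\xi$-dependence.
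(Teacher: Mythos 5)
Your approach is essentially the paper's: the paper also reduces Theorem~\ref{th:existbar} to a boundedness/regularity statement for $G_{\Delta\xi}$ (point (i) of Lemma~\ref{lem:LipGbar}, which shows $G_{\Delta\xi}\in C^1(F,F)$ with bounds uniform on balls $B_M$) by writing $P_{\Delta\xi}=\lP\circ P$, $Q_{\Delta\xi}=\lP\circ Q$ for the sampling operator $\lP$ and proving that $\lP$ is continuous from $H^1(\R)$ into $L^\infty(\R)\cap L^2(\R)$, after which the contraction argument is standard. Your emphasis on $\Delta\xi$-uniformity is exactly the right thing to watch and matches what the paper needs later.

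One step is imprecise as written: you assert that because $P_{\Delta\xi}(Y_1)-P_{\Delta\xi}(Y_2)$ is bounded pointwise by $C(M)\norm{Y_1-Y_2}_F$, it is controlled ``hence in $L^\infty$ and in $L^2$.'' A pointwise (i.e.\ $L^\infty$) bound on a step function supported on all of $\R$ does not give an $L^2$ bound; for that you need the full $H^1$ Lipschitz estimate $\norm{P(Y_1)-P(Y_2)}_{H^1}\leq C(M)\norm{Y_1-Y_2}_F$ from Proposition~\ref{prop:LipG} together with the quantitative bound on the sampling operator that the paper proves explicitly, namely
\begin{equation*}
  \norm{\lP(f)}_{L^2}^2\leq(1+\Delta\xi)\norm{f}_{L^2}^2+\Delta\xi\norm{f_\xi}_{L^2}^2,
\end{equation*}
obtained by integrating $f(\xi_i)^2=f(\xi)^2-2\int_{\xi_i}^{\xi}ff_\xi$ over each cell. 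The same remark applies to your $L^2$ bound on $P_{\Delta\xi}(Y)$ itself. This is a repair, not a change of route: all the needed ingredients are in Proposition~\ref{prop:LipG}, and once this estimate is inserted your argument coincides with the paper's.
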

This theorem is a consequence of point (i) in the
following lemma.
\begin{lemma}
  \label{lem:LipGbar}
  The following statements hold
  \begin{enumerate}
  \item[(i)]The mapping $G_{\Delta\xi}:F\to F$
    belongs to $C^1(F,F)$ and
    \begin{equation}
      \label{eq:bdderGxi}
      \norm{G_{\Delta\xi}(Y)}_{F}+\norm{\fracpar{G_{\Delta\xi}}{Y}(Y)}_{L(F,F)}\leq C(M),
    \end{equation}
    for any $Y\in B_M$.
  \item[(ii)] For any $Y\in F$, we have
    \begin{equation}
      \label{eq:estdiffGbarG}
      \norm{G(Y)-G_{\Delta\xi}(Y)}_{F}\leq C\sqrt{\Delta\xi}
    \end{equation}
    for some constant $C$ which only depends on
    $\norm{Y}_F$.
  \end{enumerate}
\end{lemma}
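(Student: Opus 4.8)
The two statements require rather different treatments, so I would organise the proof around the two parts.

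For part~(i), the point is that $G_{\Delta\xi}$ differs from $G$ only in that $P$ and $Q$ are replaced by their piecewise-constant samplings $P_{\Delta\xi}$ and $Q_{\Delta\xi}$ defined in \eqref{eq:Pbar}--\eqref{eq:Qbar}. The plan is to show first that the sampling operator $R_{\Delta\xi}\colon f\mapsto\sum_i f(\xi_i)\chi_{[\xi_i,\xi_{i+1})}$ is bounded from $H^1(\R)$ into $L^\infty(\R)\cap L^2(\R)$, with operator norm independent of $\Delta\xi$. Since Proposition~\ref{prop:LipG} already gives $P,Q\in C^1(F,H^1(\R))$ with the bound \eqref{eq:bdderPQ}, composing with the bounded linear map $R_{\Delta\xi}$ yields immediately that $P_{\Delta\xi},Q_{\Delta\xi}\in C^1(F,L^\infty\cap L^2)$ with $\frac{\partial}{\partial Y}(P_{\Delta\xi})=R_{\Delta\xi}\circ\frac{\partial P}{\partial Y}$, and the same constant $C(M)$ controls them. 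The remaining structure of $G_{\Delta\xi}$ is built from sums and products of these terms with the coordinate functions $U,q,w,h$, so Lemma~\ref{lem:Blip} applies verbatim, exactly as in the proof of Proposition~\ref{prop:LipG}, to give $G_{\Delta\xi}\in C^1(F,F)$ together with \eqref{eq:bdderGxi}. I would emphasise the uniformity of the $H^1\to L^\infty$ sampling bound, since that is what makes the constant $\Delta\xi$-independent; the $L^\infty$ control of a step function by the $H^1$ norm of the sampled function follows from $\Linf$-control of $H^1$ functions plus the fact that sampling cannot increase the sup-norm.

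For part~(ii), the difference $G(Y)-G_{\Delta\xi}(Y)$ reduces, after inspecting \eqref{eq:sysban} and \eqref{eq:sysbar}, to expressions built from the two pointwise differences $P-P_{\Delta\xi}$ and $Q-Q_{\Delta\xi}$ multiplied by bounded coordinate functions. So the whole estimate hinges on bounding $\norm{P-P_{\Delta\xi}}_{L^2}$ and $\norm{Q-Q_{\Delta\xi}}_{L^2}$ by $C\sqrt{\Delta\xi}$. For this I would use the standard approximation estimate for replacing an $H^1$ function $f$ by its left-endpoint step function: on each interval $[\xi_i,\xi_{i+1})$ one has $|f(\xi)-f(\xi_i)|\le\int_{\xi_i}^{\xi_{i+1}}|f_\xi|\,d\xi$, whence $\int_{\xi_i}^{\xi_{i+1}}|f(\xi)-f(\xi_i)|^2\,d\xi\le\Delta\xi\,\bigl(\int_{\xi_i}^{\xi_{i+1}}|f_\xi|\,d\xi\bigr)^2\le(\Delta\xi)^2\int_{\xi_i}^{\xi_{i+1}}|f_\xi|^2\,d\xi$ by Cauchy--Schwarz; summing over $i$ gives $\norm{f-R_{\Delta\xi}f}_{L^2}\le\Delta\xi\,\norm{f_\xi}_{L^2}$. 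Applying this with $f=Q$ (resp.\ $f=P$) and invoking \eqref{eq:bdderPQ} to bound $\norm{Q_\xi}_{L^2}$ by $C(\norm{Y}_F)$ gives $\norm{Q-Q_{\Delta\xi}}_{L^2}\le C\,\Delta\xi$, which is even better than required. The factor $\sqrt{\Delta\xi}$ in \eqref{eq:estdiffGbarG}, rather than $\Delta\xi$, is presumably there because the $L^\infty$-type terms (those entering the $\norm{U}_{L^\infty}$ and $\norm{H}_{L^\infty}$ slots of the $F$-norm) are only controlled at the slower rate; for those I would instead estimate the sup-norm of $P-P_{\Delta\xi}$ using the modulus of continuity of the $H^1$ function, which yields $\norm{P-P_{\Delta\xi}}_{L^\infty}\le C\sqrt{\Delta\xi}$ via $|f(\xi)-f(\xi_i)|\le\sqrt{\Delta\xi}\,\norm{f_\xi}_{L^2}$.

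I expect the main obstacle to be bookkeeping rather than any single hard estimate: one must carefully match each term of $G(Y)-G_{\Delta\xi}(Y)$ to the correct component of the $F$-norm and verify that the coordinate factors multiplying $P-P_{\Delta\xi}$ and $Q-Q_{\Delta\xi}$ lie in the right space to close the estimate (for instance, products such as $Q U$ and $Q U q$ appearing in the $H$- and $h$-equations must be handled so that the $L^2$ gain on $Q-Q_{\Delta\xi}$ is preserved after multiplying by the $L^\infty$-bounded quantities $U,q,w$). The $\sqrt{\Delta\xi}$ rate is then the worst rate arising across all components, dominated by the $L^\infty$-slot contributions. Once the two pointwise sampling estimates and the uniform boundedness of the coordinate factors (from $Y\in F$ and Lemma~\ref{lem:presprop}) are in hand, the bound \eqref{eq:estdiffGbarG} follows by the triangle inequality over the finitely many summands.
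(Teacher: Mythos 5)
Your proof follows essentially the same route as the paper's: part (i) rests on the boundedness of the sampling operator from $H^1(\R)$ into $L^\infty(\R)\cap L^2(\R)$ composed with the $C^1$ maps $P,Q$ of Proposition~\ref{prop:LipG}, and part (ii) on the two approximation estimates $\norm{f-\lP(f)}_{L^\infty}\le\sqrt{\Delta\xi}\,\norm{f_\xi}_{L^2}$ and $\norm{f-\lP(f)}_{L^2}\le C\,\Delta\xi\,\norm{f}_{H^1}$ applied to $P$ and $Q$ via \eqref{eq:bdderPQ}, with the $L^\infty$ component dictating the $\sqrt{\Delta\xi}$ rate, exactly as in the paper. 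The one small correction: for a general $Y\in F$ the factors $q,w,h$ lie only in $L^2(\R)$, not $L^\infty(\R)$ (Lemma~\ref{lem:presprop} concerns the flow, not arbitrary elements of $F$), so in products such as $\bigl(P-P_{\Delta\xi}\bigr)q$ it is the sup-norm bound on $P-P_{\Delta\xi}$ that must be paired with the $L^2$ bound on $q$ — which your $\sqrt{\Delta\xi}$ $L^\infty$ estimate indeed supplies.
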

\begin{proof} For any function $f\in H^1(\R)$, let
  $\lP(f)$ be the function defined as
  $\displaystyle\lP(f)(\xi)=\sum_{i=-\infty}^\infty
  f(\xi_i)\chi_{[\xi_i,\xi_{i+1})}(\xi)$.  Thus,
  we can rewrite $Q_{\Delta\xi}(Y)$ and $P_{\Delta\xi} (Y)$ as
  \begin{equation*}
    Q_{\Delta\xi}(Y)=\lP[Q(Y)]\text{ and }P_{\Delta\xi}(Y)=\lP[P(Y)].
  \end{equation*}
  Let us prove that $\lP$ is a continuous mapping
  from $H^1(\R)$ to $L^\infty(\R)\cap L^2(\R)$. 
  By using the
  Sobolev embedding theorem of $H^1(\R)$ into
  $L^\infty(\R)$, we get
  \begin{equation*}
    \norm{\lP(f)}_{L^\infty}\leq\norm{f}_{L^\infty}\leq C\norm{f}_{H^1}
  \end{equation*}
  for some constant $C$, so that $\lP$ is
  continuous from $H^1(\R)$ into $L^\infty(\R)$. 
  The $L^2$ norm of $\lP(f)$ is given by
  \begin{equation*}
    \norm{\lP(f)}_{L^2}^2=\sum_{i=-\infty}^\infty \Delta\xi f(\xi_i)^2.
  \end{equation*}
  We have, for all $\xi\in[\xi_i,\xi_{i+1})$, that
  \begin{align*}
    f(\xi_i)^2&=f(\xi)^2-2\int_{\xi_i}^{\xi}
    f(\eta)f_\xi(\eta)\,d\eta\\
    &\leq
    f(\xi)^2+\int_{\xi_i}^{\xi_{i+1}}
    f^2(\eta)\,
    d\eta+\int_{\xi_i}^{\xi_{i+1}}
    f_\xi^2(\eta)\,d\eta
  \end{align*}
  which, after integration over
  $[\xi_i,\xi_{i+1})$, yields
  \begin{equation*}
    \Delta \xi f(\xi_i)^2\leq\int_{\xi_i}^{\xi_{i+1}}
    f(\eta)^2\,d\eta+
    \Delta\xi\bigl(\int_{\xi_i}^{\xi_{i+1}}f^2(\eta)\,d\eta+
    \int_{\xi_i}^{\xi_{i+1}}f_\xi^2(\eta)\,d\eta\bigr).
  \end{equation*}
  Hence,
  \begin{equation*}
    \norm{\lP(f)}_{L^2}^2\leq(1+\Delta\xi)
    \norm{f}_{L^2}^2+\Delta\xi\norm{f_\xi}_{L^2}^2
  \end{equation*}
  and the mapping $\lP$ is continuous from
  $H^1(\R)$ to $L^2(\R)$. Since $Q_{\Delta\xi}$
  and $P_{\Delta\xi}$ are compositions of a
  continuous linear map $\lP$ and a $C^1$ map,
  they are also $C^1$ and
  \begin{equation*}
    \fracpar{P_{\Delta\xi}}{Y}(\bar Y)=\lP(\fracpar{P}{Y}(Y)[\bar Y])
  \end{equation*}
  for all $\bar Y\in F$.  The same holds for $Q$
  so that \eqref{eq:bdderGxi} follows from Lemma
  \ref{lem:Blip}. Let us prove point (ii). First
  we note that \eqref{eq:estdiffGbarG} follows
  directly from the definitions of $G$,
  $G_{\Delta\xi}$ and the estimate
  \begin{equation}
    \label{eq:estdiffGbarG01}
    \norm{Q(Y)-Q_{\Delta\xi}(Y)}_{L^2\cap L^\infty}+\norm{P(Y)-P_{\Delta\xi}(Y)}_{L^2\cap L^\infty}\leq C\sqrt{\Delta\xi}.
  \end{equation}
  Let us prove \eqref{eq:estdiffGbarG01}. We
  estimate $\norm{\id-\lP}_{L(H^1,L^\infty\cap
    L^2)}$, where the norm here is the operator
  norm from $H^1(\R)$ to $L^\infty(\R)\cap
  L^2(\R)$.  Let us consider $f\in H^1(\R)$, we
  have
  \begin{equation*}
    \norm{f-\lP(f)}_{L^\infty}\leq
    \sup_{i}
    \norm{f(\xi)-f(\xi_i)}_{L^\infty([\xi_i,\xi_{i+1}])}.
  \end{equation*}
  For any $\xi\in[\xi_i,\xi_{i+1})$, we have
  $\abs{f(\xi)-f(\xi_i)}
  \leq\sqrt{\Delta\xi}\norm{f_\xi}_{L^2}$, by the
  Cauchy--Schwartz inequality. Hence,
  \begin{equation*}
    \norm{f-\lP(f)}_{L^\infty}\leq
    \sqrt{\Delta\xi}\norm{f_\xi}_{L^2}\leq\sqrt{\Delta\xi}\norm{f}_{H^1}.
  \end{equation*}
  We have
  \begin{align*}
    \int_{\xi_i}^{\xi_{i+1}}
    \abs{f(\xi)-\lP(f)(\xi)}^2\,d\xi
    &=\int_{\xi_i}^{\xi_{i+1}}
    \abs{\int_{\xi_i}^{\xi}f_\xi(\eta)
    \,d\eta}^2\,d\xi\\ &\leq\int_{\xi_i}^{\xi_{i+1}}
    ((\xi-\xi_i)\int_{\xi_i}^{\xi}f_\xi^2(\eta)
    \,d\eta)\,d\xi\\
    &\leq\int_{\xi_i}^{\xi_{i+1}}f_\xi^2(\eta)
    \,d\eta\int_{\xi_i}^{\xi_{i+1}}
    (\xi-\xi_{i})\,d\xi\\
    &=\sfrac{(\Delta\xi)^2}{2}
    \int_{\xi_i}^{\xi_{i+1}}f_\xi^2(\eta)\,d\eta.
  \end{align*}
  Hence,
  \begin{equation}
    \label{eq:L2normProj}
    \norm{f-\lP(f)}_{L^2}\leq\sfrac{\Delta\xi}{\sqrt2}
    \norm{f}_{H^1}
  \end{equation}
  and we have proved that $\norm{\id-\lP}_{L^2\cap
    L^\infty}\leq C\sqrt{\Delta\xi}$ for some
  constant $C$. Then, we have
  \begin{align*}
    \norm{Q(Y)-Q_{\Delta\xi}(Y)}_{L^2\cap L^\infty}\leq
    C\sqrt{\Delta\xi}\norm{Q(Y)}_{H^1}\leq
    C'\sqrt{\Delta\xi}
  \end{align*}
  for another constant $C'$ which depends only on
  $\norm{Y}_F$.  One proves in the same way the
  same estimate for $P$ and thus we obtain
  \eqref{eq:estdiffGbarG01}.
\end{proof}

Concerning our new system of equations
\eqref{eq:sysbar}, it is not difficult to show in
the same way as in \eqref{eq:compIpres} that
\begin{equation*}
  I_{\Delta\xi}(Y):= U^2 q^2+ w^2- q h
\end{equation*}
is also a conserved quantity along the exact
solution of our problem. The system
\eqref{eq:sysbar} is introduced because it allows
for a space discretisation of the original system
\eqref{eq:sysban}. Indeed, the set of piecewise
constant functions is preserved:
\begin{lemma}
  The set $F_{\Delta\xi}$ is preserved, that is,
  if $Y_0\in F_{\Delta\xi}$ and $Y(t)$ is the
  solution of \eqref{eq:sysbar} with initial data
  $Y_0$, then $Y(t)\in F_{\Delta\xi}$ for all
  $t\in[0,T]$.
\end{lemma}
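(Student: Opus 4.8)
The plan is to realise $F_{\Delta\xi}$ as a closed linear subspace of $F$ that is left invariant by the vector field $G_{\Delta\xi}$, and then to transfer the conclusion from this subspace to $F$ using the uniqueness part of Theorem~\ref{th:existbar}. In outline: first I would verify that $G_{\Delta\xi}$ maps $F_{\Delta\xi}$ into itself; then I would solve $Y_t=G_{\Delta\xi}(Y)$ inside the Banach space $F_{\Delta\xi}$; finally I would identify the resulting trajectory with $Y(t)$ by uniqueness.

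The crucial observation is that, by their very definitions \eqref{eq:Pbar} and \eqref{eq:Qbar}, the functions $P_{\Delta\xi}(Y)$ and $Q_{\Delta\xi}(Y)$ are constant on each interval $[\xi_i,\xi_{i+1})$ for \emph{every} $Y\in F$. The collection of functions that are constant on each such interval is stable under finite linear combinations and under pointwise products, hence also under the powers $U^2$ and $U^3$. Inspecting the right-hand side of \eqref{eq:sysbar}, I would then conclude that if $U,q,w,h$ are piecewise constant on the grid, so are the six expressions $\gamma U$, $-Q_{\Delta\xi}$, $U^3-2P_{\Delta\xi}U$, $\gamma w$, $\frac{\gamma}{2}h+\bigl(\frac{3-2\gamma}{2}U^2-P_{\Delta\xi}\bigr)q$ and $-2Q_{\Delta\xi}Uq+\bigl(3U^2-2P_{\Delta\xi}\bigr)w$. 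This gives the invariance $G_{\Delta\xi}(Y)\in F_{\Delta\xi}$ for all $Y\in F_{\Delta\xi}$.

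Since the constraint of being constant on each $[\xi_i,\xi_{i+1})$ is preserved under $L^2$ and $L^\infty$ limits, $F_{\Delta\xi}$ is closed in $F$ and is therefore itself a Banach space for $\norm{\,\cdot\,}_F$. Combining this with Lemma~\ref{lem:LipGbar}(i), the restriction of $G_{\Delta\xi}$ to $F_{\Delta\xi}$ is a $C^1$ self-map satisfying the bound \eqref{eq:bdderGxi}, so the contraction argument of Theorem~\ref{th:existbar} applies verbatim inside $F_{\Delta\xi}$ and yields a short-time solution $\widetilde Y(t)\in F_{\Delta\xi}$ with $\widetilde Y(0)=Y_0$. (Equivalently, every Picard iterate generated from $Y_0$ lies in $F_{\Delta\xi}$, and so does its limit because $F_{\Delta\xi}$ is closed.) Read back in $F$, the curve $\widetilde Y$ solves the same initial value problem as $Y$; hence the uniqueness statement of Theorem~\ref{th:existbar} forces $Y(t)=\widetilde Y(t)\in F_{\Delta\xi}$ for all $t\in[0,T]$.

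I do not expect a genuine obstacle: the whole content of the lemma is the inclusion $G_{\Delta\xi}(F_{\Delta\xi})\subseteq F_{\Delta\xi}$, which holds exactly because the nonlocal integral operators $P$ and $Q$ have been replaced by their grid-sampled, piecewise constant approximations $P_{\Delta\xi}$ and $Q_{\Delta\xi}$. The only bookkeeping point is the closedness of $F_{\Delta\xi}$, which legitimises both the restricted contraction scheme and the transfer of uniqueness, and which is immediate from the description of its elements.
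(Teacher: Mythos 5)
Your argument is correct and is exactly the ``straightforward'' proof the paper omits: the grid-sampled operators $P_{\Delta\xi}$, $Q_{\Delta\xi}$ output piecewise constant functions by construction, so the right-hand side of \eqref{eq:sysbar} maps the closed subspace $F_{\Delta\xi}$ into itself, and the Picard iteration together with uniqueness from Theorem~\ref{th:existbar} keeps the trajectory in $F_{\Delta\xi}$. No issues to report.
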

The proof of this lemma is straightforward. We can
now compare solutions of \eqref{eq:sysbar} and of the
original system \eqref{eq:sysban}.
\begin{theorem}\label{th:convbar}
  Given $M>0$ and $Y_0,Y_{0,\Delta\xi}\in F$. Let
  $Y(t)$ be the short-time solution of
  \eqref{eq:sysban} with initial data $Y_0$ and
  $Y_{\Delta\xi}(t)$ be the short-time solution of
  \eqref{eq:sysbar} with initial data $Y_{0,\Delta\xi}$
  in the interval $[0,T]$. If we have
  \begin{equation*}
    \norm{Y(t)}_F\leq M\text{ and }\norm{Y_{\Delta\xi}(t)}_F\leq M\quad\text{ for all }t\in[0,T], 
  \end{equation*}
  then we also have
  \begin{equation}
    \label{eq:bdcompsys}
    \norm{Y(t)-Y_{\Delta\xi}(t)}_{F}\leq 
    \bigl(\norm{Y_0-Y_{0,\Delta\xi}}+CT\sqrt{\Delta\xi}\bigr)\e^{CT}
     \quad\text{for all }t\in[0,T]
  \end{equation}
  with some constant $C$ which depends only on $M$.
\end{theorem}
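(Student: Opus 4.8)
The plan is to run a Gronwall argument on the difference $Y(t)-Y_{\Delta\xi}(t)$, combining the stability (Lipschitz) bound for $G$ coming from Proposition \ref{prop:LipG} with the consistency estimate \eqref{eq:estdiffGbarG} of Lemma \ref{lem:LipGbar}. First I would write both flows in integral form,
\begin{equation*}
Y(t)-Y_{\Delta\xi}(t)=Y_0-Y_{0,\Delta\xi}+\int_0^t\bigl(G(Y(\tau))-G_{\Delta\xi}(Y_{\Delta\xi}(\tau))\bigr)\,d\tau,
\end{equation*}
which is convenient because it avoids having to differentiate the possibly nonsmooth scalar function $t\mapsto\norm{Y(t)-Y_{\Delta\xi}(t)}_F$; one then simply takes the $F$-norm and passes it inside the integral.

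The key step is to split the integrand by inserting the intermediate term $G(Y_{\Delta\xi}(\tau))$, namely
\begin{equation*}
G(Y)-G_{\Delta\xi}(Y_{\Delta\xi})=\bigl(G(Y)-G(Y_{\Delta\xi})\bigr)+\bigl(G(Y_{\Delta\xi})-G_{\Delta\xi}(Y_{\Delta\xi})\bigr).
\end{equation*}
For the first bracket I would use that the ball $B_M$ is convex and that Proposition \ref{prop:LipG} bounds $\norm{\fracpar{G}{Y}(Y)}_{L(F,F)}\leq C(M)$ on $B_M$; by the mean value inequality along the segment joining $Y(\tau)$ and $Y_{\Delta\xi}(\tau)$ (both in $B_M$ by hypothesis), the map $G$ is Lipschitz on $B_M$ with constant $C(M)$, so $\norm{G(Y(\tau))-G(Y_{\Delta\xi}(\tau))}_F\leq C(M)\norm{Y(\tau)-Y_{\Delta\xi}(\tau)}_F$. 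For the second bracket, the consistency estimate \eqref{eq:estdiffGbarG} applied at $Y_{\Delta\xi}(\tau)\in B_M$ gives $\norm{G(Y_{\Delta\xi}(\tau))-G_{\Delta\xi}(Y_{\Delta\xi}(\tau))}_F\leq C\sqrt{\Delta\xi}$, where $C$ depends only on $M$.

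Setting $\phi(t):=\norm{Y(t)-Y_{\Delta\xi}(t)}_F$ and combining these two bounds with $t\leq T$, I obtain
\begin{equation*}
\phi(t)\leq\phi(0)+CT\sqrt{\Delta\xi}+C(M)\int_0^t\phi(\tau)\,d\tau,
\end{equation*}
and Gronwall's lemma then yields $\phi(t)\leq\bigl(\phi(0)+CT\sqrt{\Delta\xi}\bigr)\e^{C(M)T}$, which is exactly \eqref{eq:bdcompsys} after renaming the constants into a single $C$ depending only on $M$. I expect no genuine obstacle here: the real analytic content is already packaged into Proposition \ref{prop:LipG} (the $C^1$ bound that furnishes stability of $G$) and into the consistency estimate of Lemma \ref{lem:LipGbar}(ii). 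The only point deserving a little care is that the Lipschitz constant for $G$ must depend on $M$ alone, and this is guaranteed precisely by the convexity of $B_M$ together with the uniform bound on $\fracpar{G}{Y}$ over that ball; one could equally split the integrand the other way, applying consistency at $Y(\tau)$ and the Lipschitz bound \eqref{eq:bdderGxi} for $G_{\Delta\xi}$, with the identical outcome.
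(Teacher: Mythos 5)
Your argument is correct and is essentially identical to the paper's proof: the same integral formulation, the same insertion of the intermediate term $G(Y_{\Delta\xi}(\tau))$, the Lipschitz bound from Proposition \ref{prop:LipG} on $B_M$ for the first bracket, the consistency estimate \eqref{eq:estdiffGbarG} for the second, and Gronwall's Lemma to conclude. The only difference is that you spell out the mean value inequality on the convex ball $B_M$, which the paper leaves implicit.
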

\begin{proof}
  The proof of this theorem is a consequence of Lemma~\ref{lem:LipGbar} 
  and of Gronwall's Lemma. We have
  \begin{align*}
    Y(t)-Y_{\Delta\xi}(t)&=Y_0-Y_{0,\Delta\xi}+
    \int_{0}^t\bigl(G(Y(\tau))-G_{\Delta\xi}(Y_{\Delta\xi}(\tau))\bigr)\,d\tau\\
    &=Y_0-Y_{0,\Delta\xi}+\int_{0}^t\bigl(G(Y(\tau))-G(Y_{\Delta\xi}(\tau))
    +G(Y_{\Delta\xi}(\tau))-G_{\Delta\xi}(Y_{\Delta\xi}(\tau))\bigr)\,
    d\tau
  \end{align*}
  which yields, after using Proposition
  \ref{prop:LipG} and Lemma \ref{lem:LipGbar},
  \begin{equation*}
    \norm{Y(t)-Y_{\Delta\xi}(t)}_F\leq\norm{Y_0-Y_{0,\Delta\xi}}_F+C\int_{0}^t\norm{Y(\tau)-Y_{\Delta\xi}(\tau)}_F\,d\tau+CT\sqrt{\Delta\xi},
  \end{equation*}
  for some constant $C$ which depends only on
  $M$. Then, \eqref{eq:bdcompsys} follows from
  Gronwall's Lemma.
\end{proof}

Lemma \ref{lem:presprop} and Theorem
\ref{th:decay} show that there exist properties of
the initial data that are preserved by the system
\eqref{eq:sysban}. The same results - with the
exception of property (iv) in Lemma
\ref{lem:presprop} - hold for the system
\eqref{eq:sysbar}. This is the content of the
following theorem.

\begin{theorem}
  \label{th:proppresbar}
  We consider an initial data $Y_0\in F$ and the
  corresponding short time solution $Y(t)$ of
  \eqref{eq:sysbar} given by Theorem
  \ref{th:existbar}.
  \begin{enumerate}
  \item[(i)] If $q_0$, $w_0$, $h_0$ belongs to
    $L^\infty(\R)$ then
    \begin{equation*}
      \sup_{t\in[0,T]}(\norm{q(t,\cdot)}_{L^\infty}+\norm{w(t,\cdot)}_{L^\infty}+\norm{h(t,\cdot)}_{L^\infty})\leq C
    \end{equation*}
    for some constant $C$ which depends only on
    $T$ and $\norm{Y_0}_{F^e}$.
  \item[(ii)] If we have $qh=U^2q^2+w^2$ for $t=0$ 
   (or $I(Y_0)=0$) 
    then this holds for all $t\in[0,T]$.
  \item[(iii)] If we have $qh=U^2q^2+w^2$ 
   (or $I(Y)=0$) and
    $q\geq0,\ h\geq0,\ q+h\geq c$ almost
    everywhere for some constant $c>0$, then the
    same relations holds for all $t\in[0,T]$.
  \item[(iv)] If $Y_0\in F^e$, then
    \begin{equation}
      \label{eq:decGbarexp}
      \sup_{t\in[0,T]}\norm{Y(t,\cdot)}_{F^e}\leq C,
    \end{equation}
    if $Y_0\in F^\alpha$, then
    \begin{equation}
      \label{eq:decGbarpol}
      \sup_{t\in[0,T]}\norm{Y(t,\cdot)}_{F^\alpha}\leq C,
    \end{equation}
    where the constant $C$ depends only on $T$ and
    $\norm{Y_0}_{F^e}$, and $T$ and
    $\norm{Y_0}_{F^\alpha}$, respectively.
  \end{enumerate}
\end{theorem}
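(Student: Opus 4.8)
The plan is to show that the proofs of Lemma~\ref{lem:presprop} (parts (i)--(iii)) and of Theorem~\ref{th:decay} transfer essentially verbatim to the system \eqref{eq:sysbar} once $P$ and $Q$ are replaced by $P_{\Delta\xi}$ and $Q_{\Delta\xi}$ throughout. Property (iv) of Lemma~\ref{lem:presprop} is absent precisely because the consistency relations $y_\xi=q$, $U_\xi=w$, $H_\xi=h$ rely on differentiating \eqref{eq:syseq1} and on the uniqueness argument of \cite{horay07}; the projected right-hand sides $P_{\Delta\xi}$, $Q_{\Delta\xi}$ break this link, so that argument no longer applies. Part (ii) is immediate: the computation \eqref{eq:compIpres} showing $\sfrac{d}{dt}I_{\Delta\xi}(Y)=0$ is a purely algebraic cancellation in which $P$ and $Q$ enter only through the paired terms $-2UQq^2+2QUq^2$ and $-2Pwq+2Pqw$, so replacing them by $P_{\Delta\xi}$, $Q_{\Delta\xi}$ leaves the cancellation intact.

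For parts (i) and (iii) the single ingredient to verify is that $P_{\Delta\xi}$ and $Q_{\Delta\xi}$ remain bounded in $L^\infty$. This follows from the contractivity of the projection established in the proof of Lemma~\ref{lem:LipGbar}, namely $\norm{\lP(f)}_{L^\infty}\leq\norm{f}_{L^\infty}$, which gives $\norm{Q_{\Delta\xi}}_{L^\infty}\leq\norm{Q}_{L^\infty}\leq C$ and likewise for $P_{\Delta\xi}$. With these bounds, the Gronwall argument on $\norm{q}_{L^\infty}+\norm{w}_{L^\infty}+\norm{h}_{L^\infty}$ using the semilinearity of \eqref{eq:sysbar} in $(q,w,h)$ reproduces (i), and the Gronwall estimate on $\sfrac{1}{q+h}$ from Lemma~\ref{lem:presprop} — which uses only $I_{\Delta\xi}(Y)=0$ (hence $qh=q^2U^2+w^2\geq0$ and $\abs{w}\leq\sfrac12(q+h)$) together with the $L^\infty$ boundedness of $U$, $P_{\Delta\xi}$, $Q_{\Delta\xi}$ — reproduces (iii).

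For part (iv) the task is to re-establish for $P_{\Delta\xi}$ and $Q_{\Delta\xi}$ the three estimates on $P$, $Q$ that drive Theorem~\ref{th:decay}: the $L^1$ bound \eqref{eq:estL1Q}, the weighted $L^\infty$ bound \eqref{eq:decexpQ}, and the weighted $L^1\cap L^\infty$ bound \eqref{eq:bdalpQ}. The clean route is to avoid estimating the projection directly and instead feed the pointwise representation into the definition of $\lP$. Writing $Q_{\Delta\xi}(\xi)=Q(\xi_i)$ on $[\xi_i,\xi_{i+1})$ and using the pointwise bound \eqref{eq:grosestQ} for $Q(\xi_i)$, each weighted norm of $Q_{\Delta\xi}$ becomes a sum of the form $\sum_i\Delta\xi\,\phi(\xi_i)\abs{Q(\xi_i)}$, up to a bounded weight-distortion factor from $\phi(\xi)\leq C\phi(\xi_i)$ on each cell (valid for $\phi=\e^{\abs{\xi}}$ and $\phi=(1+\abs{\xi})^\alpha$ when $\Delta\xi\leq1$). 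Inserting \eqref{eq:grosestQ} and the inequalities $\abs{\xi_i}-\abs{\eta}\leq\abs{\xi_i-\eta}$ and \eqref{eq:ineqal}, the weight is transferred onto the kernel and the remaining $\xi_i$-sum is a Riemann sum for a convergent integral ($\int\e^{-\abs{z}}\,dz$ or $\int\e^{-\abs{z}}(1+\abs{z})^\alpha\,dz$), hence uniformly bounded in $\Delta\xi$. This yields \eqref{eq:estL1Q}, \eqref{eq:decexpQ} and \eqref{eq:bdalpQ} with $Q_{\Delta\xi}$ in place of $Q$ (and identically for $P_{\Delta\xi}$), after which the Gronwall arguments producing \eqref{eq:decGbarexp} and \eqref{eq:decGbarpol} are unchanged.

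The step I expect to be the main obstacle is this last transfer of the weighted $L^1$ estimates through the projection: unlike the $L^\infty$ bounds, they are not a direct consequence of the contractivity of $\lP$, and one must check that discretising the convolution kernel does not spoil the decay. Controlling the weight distortion cell-by-cell and recognising the resulting grid sums as uniformly bounded Riemann sums is what makes the estimates survive the passage to $Q_{\Delta\xi}$; everything else is a line-by-line repetition of the continuous proofs.
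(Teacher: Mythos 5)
Your proposal is correct and follows essentially the same route as the paper: parts (i)--(iii) are transferred verbatim once the $L^\infty$ contractivity of $\lP$ gives the bounds on $P_{\Delta\xi}$, $Q_{\Delta\xi}$, and part (iv) is reduced to re-establishing \eqref{eq:estL1Q}, \eqref{eq:decexpQ} and \eqref{eq:bdalpQ} for the projected operators via the cell-by-cell weight distortion $\e^{\abs{\xi}-\abs{\xi_i}}\leq\e^{\Delta\xi}$ and the resulting uniformly bounded grid sums. The only minor deviation is that for the unweighted $L^1$ bound \eqref{eq:estL1Q} you work directly from the kernel representation \eqref{eq:grosestQ}, whereas the paper goes through the projection error $\norm{f-\lP(f)}_{L^1}\leq\Delta\xi\norm{f_\xi}_{L^1}$ combined with the explicit formula \eqref{eq:diffQ} for $Q_\xi$; both arguments are valid.
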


\begin{proof}
  The system \eqref{eq:sysbar} is obtained from
  \eqref{eq:sysban} by simply replacing $P$ and
  $Q$ by $P_{\Delta\xi}$ and $Q_{\Delta\xi}$ as
  defined in \eqref{eq:Pbar} and
  \eqref{eq:Qbar}. Therefore, the proofs of points
  (i), (ii) and (iii) in Lemma \ref{lem:presprop},
  which do not require any special properties of
  $P$ and $Q$, apply directly to
  \eqref{eq:sysbar}. After introspection of the
  proof of Theorem \ref{th:decay}, we can see that
  in order to prove \eqref{eq:decGbarexp}, we need
  to prove that the estimates \eqref{eq:estL1Q},
  \eqref{eq:estL1P}, \eqref{eq:decexpQ},
  \eqref{eq:decexpP}, which hold for $P$ and $Q$,
  also hold for $P_{\Delta\xi}$ and
  $Q_{\Delta\xi}$, namely,
  \begin{equation}
    \label{eq:L1PQbar}
    \norm{Q_{\Delta\xi}(t,\cdot)}_{L^1}\leq C(\norm{h(t,\cdot)}_{L^1}+1),\quad
    \norm{P_{\Delta\xi}(t,\cdot)}_{L^1}\leq C(\norm{h(t,\cdot)}_{L^1}+1)
  \end{equation}
  and
  \begin{equation}
    \label{eq:decexpQPbar}
    \norm{\e^{\abs{\xi}}Q_{\Delta\xi}(t,\cdot)}_{L^\infty}\leq C L(t),\quad \norm{\e^{\abs{\xi}}
      P_{\Delta\xi}(t,\cdot)}_{L^\infty}\leq C L(t),
  \end{equation}
  where $L(t)$ is defined in \eqref{eq:defI} and
  $C$ is a constant which depends only on $T$ and
  $\norm{Y_0}_{F^e}$. We denote generically by $C$
  such constant. In the same way that we obtained
  \eqref{eq:L2normProj}, we now get that, for any
  $f\in W^{1,1}(\R)$,
  \begin{align*}
    \int_{\xi_i}^{\xi_{i+1}}
    \abs{f(\xi)-\lP(f)(\xi)}\,d\xi
    &=\int_{\xi_i}^{\xi_{i+1}}
    \abs{\int_{\xi_i}^{\xi}f_\xi(\eta)\,d\eta}
    \,d\xi\\
    &\leq\Delta\xi\int_{\xi_i}^{\xi_{i+1}}
    \abs{f_\xi(\eta)}\,d\eta
  \end{align*}
  and therefore
  \begin{equation}
    \label{eq:L1normproj}
    \norm{f-\lP(f)}_{L^1}\leq\Delta\xi\norm{f_\xi}_{L^1}.
  \end{equation}
  We obtain, after using successively
  \eqref{eq:L1normproj}, \eqref{eq:estL1Q},
  \eqref{eq:diffQ} and \eqref{eq:estL1P}, that
  \begin{align*}
    \norm{Q_{\Delta\xi}}_{L^1}&\leq\norm{Q_{\Delta\xi}-Q}_{L^1}+\norm{Q}_{L^1}\\
    &\leq\Delta\xi\norm{Q_\xi}_{L^1}+C(\norm{h}_{L^1}+1)\\
    &=\Delta\xi\norm{\sfrac{\gamma}{2}h+
      \sfrac{3-2\gamma}{2}U^2q-Pq}_{L^1}+C(\norm{h}_{L^1}+1)\\
    &\leq C\norm{P}_{L^1}+C(\norm{h}_{L^1}+1)\\
    &\leq C(\norm{h}_{L^1}+1).
  \end{align*}
  We handle in the same way $\norm{P_{\Delta\xi}}_{L^1}$
  and this concludes the proof of
  \eqref{eq:L1PQbar}. For any $\xi\in\Real$, we
  have $\xi\in[\xi_i,\xi_{i+1})$ for some
  $i$. Then,
  \begin{equation*}
    \e^{\abs{\xi}}Q_{\Delta\xi}(t,\xi)=\e^{\abs{\xi}-\abs{\xi_i}}\e^{\abs{\xi_i}}
    Q(t,\xi_i)\leq \e^{\Delta\xi}\norm{\e^\xi Q(t,\xi)}_{L^\infty}\leq CL(t)
  \end{equation*}
  by \eqref{eq:decexpQ} and, therefore,
  $\norm{\e^{\abs{\xi}}
    Q_{\Delta\xi}(t,\cdot)}_{L^\infty}\leq C
  L(t)$. Similarly, we obtain the corresponding
  result for $P_{\Delta\xi}$ so that
  \eqref{eq:decexpQPbar} is proved. Again, after
  introspection of the proof of Theorem
  \ref{th:decay}, we can check that, in order to
  prove \eqref{eq:decGbarpol}, we need to prove
  that
  \begin{equation}
    \label{eq:needprpol}
    \norm{(1+\abs{\xi})^\alpha Q_{\Delta\xi}(t,\cdot)}_{L^\infty\cap L^1}+
    \norm{(1+\abs{\xi})^\alpha P_{\Delta\xi}(t,\cdot)}_{L^\infty\cap L^1}\leq C K(t).
  \end{equation}
  We have 
  \begin{equation*}
    \norm{(1+\abs{\xi})^\alpha Q_{\Delta\xi}(t,\cdot)}_{L^\infty}\leq\norm{(1+\abs{\xi})^\alpha Q(t,\cdot)}_{L^\infty}\leq CK(t)
  \end{equation*}
  by \eqref{eq:estQKt}. Since
  $\e^{\xi-\eta}\leq\e^{\Delta\xi}\e^{\xi_i-\eta}$
  for any $(\xi,\eta)\in[\xi_i,\xi_{i+1}]^2$, we
  get
  \begin{align*}
    \norm{(1+\abs{\xi})^\alpha Q_{\Delta\xi}(t,\cdot)}_{L^1}&\leq\sum_{i=-\infty}^{\infty}\int_{\xi_i}^{\xi_{i+1}}\int_\Real(1+\abs{\xi})^{\alpha}\e^{-\abs{\xi_i-\eta}}(U^2q+h)\,d\eta d\xi\\
    &\leq \e^{\Delta\xi}\sum_{i=-\infty}^{\infty}\int_{\xi_i}^{\xi_{i+1}}\int_\Real(1+\abs{\xi})^{\alpha}\e^{-\abs{\xi-\eta}}(U^2q+h)\,d\eta d\xi\\
    &=\e^{\Delta\xi}\int_\Real\int_\Real(1+\abs{\xi})^{\alpha}\e^{-\abs{\xi-\eta}}(U^2q+h)\,d\eta
    d\xi \leq CK(t),
  \end{align*}
  by \eqref{eq:bdQusq}. The corresponding
  results for $P$ are established in the same way and
  this concludes the poof of \eqref{eq:needprpol}.
\end{proof}
In order to complete the discretisation in space,
we have to consider a finite subspace of
$F_{\Delta\xi}$. Given any integer $N$, we denote
$R=N\Delta\xi$ and we introduce the subset $F_{R}$
of $F$ defined as
\begin{align*}
  F_R=\{Y&\in  F\ :&\\
  &U(\xi)=q(\xi)=w(\xi)=h(\xi)=0,&\text{ for all
  }\xi\in(-\infty,-R)\cup[R,\infty),\\
  &\zeta(\xi)=\zeta_{\infty},\,
  H(\xi)=H_\infty,&\text{ for all }\xi\in[R,\infty),\\
  &\zeta(\xi)=\zeta_{{-\infty}},\,H(\xi)=0&\text{ for all
  }\xi\in(-\infty,-R),\\
  &\text{where $\zeta_{\pm\infty}$ and $H_\infty$
    are constants} \}.
\end{align*}
The set $F_R$ basically corresponds to functions
with compact support ($U$, $q$, $w$ and $h$ vanish
outside a compact set). We do not require that the
functions $\zeta$ and $H$ have compact support
($\zeta$ and $H$ belongs to $L^\infty$ with no
extra decay condition) but we impose that they are
constant outside the compact interval $[-R,R]$. We
denote $F_{\{\Delta\xi,R\}}=F_R\cap
F_{\Delta\xi}$.  The set $F_{\{\Delta\xi,R\}}$ is
not preserved by the flow of \eqref{eq:sysbar}
because, as mentioned earlier, $P$ and $Q$ do not
preserve compactly supported functions. That is
why we introduce the cut-off versions of $P$ and
$Q$ given by
\begin{equation*}
  P_{\{\Delta\xi,R\}}(Y)(\xi)=\sum_{i=-N}^{N-1}
  P(Y)(\xi_i)
  \chi_{[\xi_i,\xi_{i+1})}(\xi),
\end{equation*}
\begin{equation*}
  Q_{\{\Delta\xi,R\}}(Y)(\xi)=\sum_{i=-N}^{N-1} 
  Q(Y)(\xi_i)
  \chi_{[\xi_i,\xi_{i+1})}(\xi)
\end{equation*}
and define a third system of differential
equations
\begin{equation}
  \label{eq:sysbarr}
  \begin{aligned}
    \zeta_t&=\gamma U,\\
    U_t&=-Q_{\{\Delta\xi,R\}},\\
    H_t&= U^3-2P_{\{\Delta\xi,R\}} U,\\
    q_t&=\gamma  w,\\
    w_t&=\sfrac{\gamma}{2} h+
    \bigl(\sfrac{3-2\gamma}{2} U^2-P_{\{\Delta\xi,R\}}\bigr) q,\\
    h_t&=-2Q_{\{\Delta\xi,R\}} U q+ \bigl(3 U^2-2P_{\{\Delta\xi,R\}}\bigr)
    w,
  \end{aligned}
\end{equation}
or, shortly,
\begin{equation*}
  Y_t=G_{\{\Delta\xi,R\}}(Y).
\end{equation*}
It is clear from the definition that the system
\eqref{eq:sysbarr} preserves $F_{\{\Delta\xi,R\}}$ and
therefore, since $F_{\{\Delta\xi,R\}}$ is of finite
dimension, the system \eqref{eq:sysbarr} is a 
space discretisation of \eqref{eq:sysban} 
which allows for numerical computations. 
To emphasize that
we are now working in finite dimension, we denote
\begin{equation*}
  Y_i(t)=Y_{\{\Delta\xi,R\}}(t,\xi_i),
\end{equation*}
$\zeta_i= \zeta_{\{\Delta\xi,R\}}(t,\xi_i)$,
$U_i=U_{\{\Delta\xi,R\}}(t,\xi_i)$ and so on for
$H_i,q_i,w_i,h_i,P_i$ and $Q_i$
for
$i=\{-N,\ldots,N-1\}$. We have
\begin{equation*}
  Y_{\{\Delta\xi,R\}}(t,\xi)=\sum_{i=-N}^{N-1}
  Y_i(t)\chi_{[\xi_i,\xi_{i+1})}(\xi).
\end{equation*}
Again, we can show that  
\begin{equation}\label{eq:IDR}
I^i_{\{\Delta\xi,R\}}(Y):= U_i^2 q_i^2+ w_i^2- q_i h_i
\end{equation}
are conserved quantities along the exact solution
of problem \eqref{eq:sysbarr}.  Finally, note that
$F_{\{\Delta\xi,R\}}$ is contained in $F^e$ and
$F^\alpha$. Concerning the exact solution of
\eqref{eq:sysbarr}, we have the following theorem.

\begin{theorem}\label{th:existbarr}
  For an initial values $ Y_0=( y_0,
  U_0,  H_0, q_0, w_0, h_0)\in
  F$, there exists a time $T$, only depending on
  the norm of the initial values, such that the
  system of differential equations
  \eqref{eq:sysbarr} admits a unique solution in
  $C^1([0,T], F)$.
\end{theorem}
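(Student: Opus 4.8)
The plan is to mirror the proofs of Theorem~\ref{th:exist} and Theorem~\ref{th:existbar}. I would first establish that $G_{\{\Delta\xi,R\}}$ shares the regularity and boundedness of $G$ and $G_{\Delta\xi}$, namely that $G_{\{\Delta\xi,R\}}\in C^1(F,F)$ with
\begin{equation*}
  \norm{G_{\{\Delta\xi,R\}}(Y)}_F+\norm{\fracpar{G_{\{\Delta\xi,R\}}}{Y}(Y)}_{L(F,F)}\leq C(M)
\end{equation*}
for all $Y\in B_M$, and then invoke the standard contraction argument in $F$ exactly as before. Once this bound is available, Picard iteration applied to the integral equation $Y(t)=Y_0+\int_0^t G_{\{\Delta\xi,R\}}(Y(\tau))\,d\tau$ produces a unique fixed point on a time interval $[0,T]$ whose length depends only on $\norm{Y_0}_F$, and since $G_{\{\Delta\xi,R\}}$ is continuous the fixed point satisfies $Y\in C^1([0,T],F)$.

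The only thing to verify is therefore the analog of Lemma~\ref{lem:LipGbar}(i) for the cut-off nonlinearity. Here the key observation is that truncating the summation to $\{-N,\dots,N-1\}$ amounts to composing the sampling operator $\lP$ of Lemma~\ref{lem:LipGbar} with multiplication by the indicator $\chi_{[-R,R)}$: writing $\lP_R(f)=\chi_{[-R,R)}\lP(f)$, we have $P_{\{\Delta\xi,R\}}=\lP_R\circ P$ and $Q_{\{\Delta\xi,R\}}=\lP_R\circ Q$. Since multiplication by $\chi_{[-R,R)}$ is a contraction on both $L^\infty(\R)$ and $L^2(\R)$, we get $\norm{\lP_R(f)}_{L^2\cap L^\infty}\leq\norm{\lP(f)}_{L^2\cap L^\infty}\leq C\norm{f}_{H^1}$, so $\lP_R$ is again a continuous linear map from $H^1(\R)$ into $L^2(\R)\cap L^\infty(\R)$. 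As $P,Q\in C^1(F,H^1(\R))$ by Proposition~\ref{prop:LipG}, the compositions $P_{\{\Delta\xi,R\}},Q_{\{\Delta\xi,R\}}$ lie in $C^1(F,L^2(\R)\cap L^\infty(\R))$ with $\fracpar{P_{\{\Delta\xi,R\}}}{Y}(Y)[\bar Y]=\lP_R(\fracpar{P}{Y}(Y)[\bar Y])$ and a uniform bound on $B_M$ inherited from \eqref{eq:bdderPQ}. Feeding these into Lemma~\ref{lem:Blip}, precisely as in the proof of Lemma~\ref{lem:LipGbar}, yields the $C^1$ bound for $G_{\{\Delta\xi,R\}}$ displayed above.

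I do not expect a genuine obstacle: the passage from $G_{\Delta\xi}$ to $G_{\{\Delta\xi,R\}}$ only restricts the summation range, and this restriction is controlled by the same operator norm (in fact $\norm{\lP_R}\leq\norm{\lP}$), so every estimate used for $G_{\Delta\xi}$ survives verbatim. The one point worth stating carefully is that, although the dynamics ultimately live on the finite-dimensional invariant subspace $F_{\{\Delta\xi,R\}}$, it is cleanest to run the fixed-point argument in the full Banach space $F$ as in Theorems~\ref{th:exist} and~\ref{th:existbar}; the finite-dimensionality of $F_{\{\Delta\xi,R\}}$ is then used only a posteriori, to justify that \eqref{eq:sysbarr} is an ordinary differential equation amenable to numerical integration.
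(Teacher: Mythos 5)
Your proposal is correct and follows essentially the same route as the paper: the paper also reduces the theorem to the $C^1$ bound for $G_{\{\Delta\xi,R\}}$ (point (i) of Lemma~\ref{lem:LipGbarr}), obtained by writing the cut-off operators as $\lP_R$ composed with the already-analysed maps, noting that the truncation has operator norm at most one, and then invoking the standard contraction argument. The only cosmetic difference is that the paper factors through $P_{\Delta\xi}=\lP\circ P$ and applies the indicator afterwards, whereas you bundle the sampling and the indicator into a single operator acting on $P$; the two decompositions are identical.
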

This theorem is a consequence of point (i) in the
following lemma.
\begin{lemma}
  \label{lem:LipGbarr}
  The following statements holds
  \begin{enumerate}
  \item[(i)]The mapping $G_{\{\Delta\xi,R\}}:F\to F$
    belongs to $C^1(F,F)$ and
    \begin{equation}
      \label{eq:bdderGxiR}
      \norm{G_{\{\Delta\xi,R\}}(Y)}_{F}+\norm{\fracpar{G_{\{\Delta\xi,R\}}}{Y}(Y)}_{L(F,F)}\leq C(M),
    \end{equation}
    for any $Y\in B_M$.
  \item[(ii)] For any $Y\in F^e$, we have
    \begin{equation}
      \label{eq:estdiffGbarGr}
      \norm{G_{\{\Delta\xi,R\}}(Y)-G_{\Delta\xi}(Y)}_{F}\leq C \e^{-R},
    \end{equation}
    for some constant $C$ which only depends on
    $\norm{Y}_{F^e}$.
  \item[(iii)] For any $Y\in F^\alpha$, we have
    \begin{equation}
      \label{eq:estdiffGbarpol}
      \norm{G_{\{\Delta\xi,R\}}(Y)-G_{\Delta\xi}(Y)}_{F}\leq C
      (\sqrt{\Delta\xi}+\frac1{R^{\alpha/2}}),
    \end{equation}
    for some constant $C$ which only depends on
    $\norm{Y}_{F^\alpha}$.
  \end{enumerate}
\end{lemma}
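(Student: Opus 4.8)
The plan is to reduce all three statements to size estimates on the cut-off error of the discretised kernels $P_{\Delta\xi},Q_{\Delta\xi}$. The starting observation is that, since $\bigcup_{i=-N}^{N-1}[\xi_i,\xi_{i+1})=[-R,R)$, the cut-off operators are simply restrictions of the operators from the previous lemma:
\[
P_{\{\Delta\xi,R\}}(Y)=\chi_{[-R,R)}P_{\Delta\xi}(Y),\qquad Q_{\{\Delta\xi,R\}}(Y)=\chi_{[-R,R)}Q_{\Delta\xi}(Y).
\]
For point (i) this exhibits $P_{\{\Delta\xi,R\}}$ as the composition of the $C^1$ map $P_{\Delta\xi}$ (Lemma~\ref{lem:LipGbar}(i)) with multiplication by the fixed function $\chi_{[-R,R)}$, which is linear with operator norm $\le1$ on both $\Linf(\R)$ and $\Ltwo(\R)$; the same holds for $Q$. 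Hence $G_{\{\Delta\xi,R\}}\in C^1(F,F)$ and \eqref{eq:bdderGxiR} follows from \eqref{eq:bdderGxi} together with Lemma~\ref{lem:Blip}.

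For (ii) and (iii) I would first record that $G_{\Delta\xi}(Y)-G_{\{\Delta\xi,R\}}(Y)$ is nonzero only in the $U$, $H$, $w$, $h$ slots, where it equals $\Delta Q:=Q_{\Delta\xi}-Q_{\{\Delta\xi,R\}}$ or $\Delta P:=P_{\Delta\xi}-P_{\{\Delta\xi,R\}}$ multiplied by factors among $U$, $q$, $w$ that are already controlled in $\Linf$ or $\Ltwo$ by $\norm{Y}_{F^e}$ (resp. $\norm{Y}_{F^\alpha}$). Since both $\Delta P$ and $\Delta Q$ are supported in $\{\abs{\xi}\ge R\}$, both bounds reduce to estimating $\norm{\Delta Q}_{\Ltwo\cap\Linf}+\norm{\Delta P}_{\Ltwo\cap\Linf}$ on this tail, by $C\e^{-R}$ in case (ii) and by $C(\sqrt{\Delta\xi}+R^{-\alpha/2})$ in case (iii).

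For (ii) I would argue directly. The exponential decay bound \eqref{eq:estdecPQ} transfers to the piecewise constant $Q_{\Delta\xi}$ exactly as in the proof of Theorem~\ref{th:proppresbar} (using $\abs{\,\abs{\xi}-\abs{\xi_i}\,}\le\Delta\xi$), giving $\norm{\e^{\abs{\xi}}Q_{\Delta\xi}(Y)}_{\Linf}\le C(\norm{Y}_{F^e})$. Restricting to $\{\abs{\xi}\ge R\}$ then yields $\norm{\Delta Q}_{\Linf}\le C\e^{-R}$ and, since $\int_{\abs{\xi}\ge R}\e^{-2\abs{\xi}}\,d\xi=\e^{-2R}$, also $\norm{\Delta Q}_{\Ltwo}\le C\e^{-R}$; the same for $\Delta P$. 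Multiplying by the bounded factors gives \eqref{eq:estdiffGbarGr}.

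For (iii) the clean route is to split $Q_{\Delta\xi}=(Q_{\Delta\xi}-Q)+Q$ on the tail. The first piece is controlled globally by the projection error already established in \eqref{eq:L2normProj}–\eqref{eq:estdiffGbarG01}, which contributes the $\sqrt{\Delta\xi}$ term; the second is the genuine tail of the continuous kernel, handled with the polynomial decay estimate \eqref{eq:estdecPQ2} by writing $\abs{Q}^2=(1+\abs{\xi})^{-2\alpha}\big((1+\abs{\xi})^\alpha\abs{Q}\big)^2$ and integrating, which produces a factor $\int_{\abs{\xi}\ge R}(1+\abs{\xi})^{-2\alpha}\,d\xi\le CR^{1-2\alpha}$. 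I expect the only real (if minor) obstacle to be the exponent bookkeeping at this last step: one needs $R^{1-2\alpha}\le R^{-\alpha}$, which is precisely where $\alpha>1$ enters and gives $\norm{\Delta Q}_{\Ltwo}\le C(\sqrt{\Delta\xi}+R^{-\alpha/2})$, the $\Linf$ bound being immediate and even of order $R^{-\alpha}$. The same treatment applies to $\Delta P$, and collecting the contributions yields \eqref{eq:estdiffGbarpol}.
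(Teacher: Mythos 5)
Your proposal is correct and follows essentially the same route as the paper: point (i) via the cut-off being a projection of norm at most one composed with the $C^1$ map from Lemma~\ref{lem:LipGbar}, point (ii) via the exponential decay estimate \eqref{eq:estdecPQ}, and point (iii) via the splitting $Q_{\Delta\xi}=(Q_{\Delta\xi}-Q)+Q$ on the tail together with \eqref{eq:estdecPQ2}. The only (immaterial) divergence is in the tail $L^2$ bound of (iii), where you integrate $(1+\abs{\xi})^{-2\alpha}$ against the weighted $L^\infty$ bound and invoke $\alpha>1$, whereas the paper pulls out $(1+R)^{-\alpha}$ and uses the weighted $L^1\cap L^\infty$ bound; both give the stated $R^{-\alpha/2}$.
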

Note that for $Y(t)$ solution of \eqref{eq:sysbarr}, we have
\begin{equation*}
  \sup_{t\in[0,T]}\norm{Y(t,\cdot)}_{F^e}\leq C\text{ and }\sup_{t\in[0,T]}\norm{Y(t,\cdot)}_{F^\alpha}\leq C,
\end{equation*}
where $C$ depends on $\norm{Y_0}_{F^e}$ and
$\norm{Y_0}_{F^\alpha}$, respectively. This
follows from \eqref{eq:decGbarexp},
\eqref{eq:decGbarpol}, \eqref{eq:estdiffGbarGr}
and \eqref{eq:estdiffGbarpol}.
\begin{proof}[Proof of Lemma \ref{lem:LipGbarr}]
  For any function $f\in L^\infty(\R)\cap
  L^2(\R)$, let $\lP_R(f)$ be the function defined
  as $\lP_R(f)(\xi)=f(\xi)\chi_{[-R,R)}$.  Thus,
  we can rewrite $Q_{\{\Delta\xi,R\}}(Y)$ and
  $P_{\{\Delta\xi,R\}}(Y)$ as
  \begin{equation*}
    Q_{\{\Delta\xi,R\}}(Y)=\lP_R[Q_{\Delta\xi}(Y)]\text{ and }P_{\{\Delta\xi,R\}}(Y)=\lP_R[P_{\Delta\xi}(Y)].
  \end{equation*}
  The operator $\lP_R$ is a projection from
  $L^\infty(\R)\cap L^2(\R)$ into itself and
  therefore its norm is smaller than one. Hence,
  \eqref{eq:bdderGxiR} follows from
  \eqref{eq:bdderGxi}. Let us prove (ii).  We
  consider $Y\in F^e$. We have to prove
  \begin{equation}
    \label{eq:estdiffGbarGr01}
    \norm{Q_{\{\Delta\xi,R\}}(Y)-Q_{\Delta\xi}(Y)}_{L^2\cap L^\infty}+\norm{P_{\{\Delta\xi,R\}}(Y)-P_{\Delta\xi}(Y)}_{L^2\cap L^\infty}\leq 
    C \e^{-R}.
  \end{equation}
  By \eqref{eq:estdecPQ}, we have
  $\norm{\e^{\abs{\xi}}Q}_{L^\infty}+
  \norm{\e^{\abs{\xi}}P}_{L^\infty}\leq C$. Hence,
  \begin{equation*}
    \norm{Q_{\{\Delta\xi,R\}}-Q_{\Delta\xi}}_{L^\infty}=\sup_{\abs{\xi_i}\geq R}\abs{Q(\xi_i)}\leq C\sup_{\abs{\xi_i}\geq R}\e^{-\abs{\xi_i}}=C\e^{-R}.
  \end{equation*}
  We have
  \begin{equation*}
    \norm{Q_{\{\Delta\xi,R\}}-Q_{\Delta\xi}}_{L^2}^2=\Delta\xi \sum_{\abs{\xi_i}\geq R}Q(\xi_i)^2\leq C\Delta\xi \sum_{\abs{i\Delta\xi}\geq R} \e^{-2\abs{i\Delta\xi}}\leq C\sfrac{2\Delta\xi}{1-\e^{-2\Delta\xi}}\e^{-2R}
  \end{equation*}
  and therefore
  $\norm{Q_{\{\Delta\xi,R\}}-Q_{\Delta\xi}}_{L^2}\leq
  C \e^{-R}$. We prove in the same way the
  corresponding result for $P$ and it concludes
  the proof of \eqref{eq:estdiffGbarGr01}. The
  estimate \eqref{eq:estdiffGbarGr} follows from
  \eqref{eq:estdiffGbarGr01}. Let us prove
  (iii). We consider $Y\in F^\alpha$. We have to
  prove that 
  \begin{multline}
    \label{eq:estdiffPQpol}
    \norm{Q_{\{\Delta\xi,R\}}(Y)-Q_{\Delta\xi}(Y)}_{L^2\cap
      L^\infty}\\+\norm{P_{\{\Delta\xi,R\}}(Y)-P_{\Delta\xi}(Y)}_{L^2\cap
      L^\infty}\leq C
    (\sqrt{\Delta\xi}+\frac1{R^{\alpha/2}}).
  \end{multline}
  By \eqref{eq:estdecPQ2}, we have
  $\norm{(1+\abs{\xi})^\alpha Q}_{L^\infty}+
  \norm{(1+\abs{\xi})^\alpha P}_{L^\infty}\leq
  C$. Hence,
  \begin{equation}
    \label{eq:Qdifpol1}
    \norm{Q_{\{\Delta\xi,R\}}-Q_{\Delta\xi}}_{L^\infty}=\sup_{\abs{\xi_i}\geq R}\abs{Q(\xi_i)}\leq C\sup_{\abs{\xi_i}\geq R}(1+\abs{\xi_i})^{-\alpha}=C(1+R)^{-\alpha}.
  \end{equation}
  We have
  \begin{align}
    \notag
    \norm{Q_{\{\Delta\xi,R\}}-Q_{\Delta\xi}}_{L^2(\Real)}&=\norm{Q_{\Delta\xi}}_{L^2(\Real\setminus[-R,R])}\\
    \notag
    &\leq\norm{Q_{\Delta\xi}-Q}_{L^2(\Real\setminus[-R,R])}+\norm{Q}_{L^2(\Real\setminus[-R,R])}\\
    \label{eq:Qdifpol2}
    &\leq
    C(\sqrt{\Delta\xi}+\norm{Q}_{L^2(\Real\setminus[-R,R])}),
  \end{align}
  from \eqref{eq:estdiffGbarG01}. Since
  \begin{align*}
    \norm{Q}_{L^2(\Real\setminus[-R,R])}^2&\leq
    (1+\abs{R})^{-\alpha}\int_{\Real\setminus[-R,R]}(1+\abs{\xi})^{\alpha}Q^2\,d\xi\\
    &\leq C(1+R)^{-\alpha},\quad\text{ by
      \eqref{eq:estdecPQ2}},
  \end{align*}
  the estimate \eqref{eq:estdiffPQpol} follows
  from \eqref{eq:Qdifpol1} and
  \eqref{eq:Qdifpol2}.
\end{proof}
Again, the system \eqref{eq:sysbarr} preserves
properties of the initial data:
\begin{theorem}
  We consider an initial data $Y_0\in F$ and the
  corresponding short time solution $Y(t)$ of
  \eqref{eq:sysbarr} given by Theorem
  \ref{th:existbarr}. Then, $Y(t)$ satisfy points
  (i)-(iv) as given in Theorem
  \ref{th:proppresbar}.
\end{theorem}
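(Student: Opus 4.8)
The plan is to exploit that \eqref{eq:sysbarr} has exactly the same algebraic structure as \eqref{eq:sysbar}, the only difference being that the nonlocal terms $P_{\Delta\xi}, Q_{\Delta\xi}$ are replaced by their truncations $P_{\{\Delta\xi,R\}}=\lP_R[P_{\Delta\xi}]$ and $Q_{\{\Delta\xi,R\}}=\lP_R[Q_{\Delta\xi}]$, where $\lP_R$ is multiplication by $\chi_{[-R,R)}$. Since $\abs{\lP_R[f]}\leq\abs{f}$ pointwise, we have $\norm{\omega\,\lP_R[f]}_{L^p}\leq\norm{\omega f}_{L^p}$ for every nonnegative weight $\omega$ and every $p$. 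Consequently, each estimate on $P_{\Delta\xi}, Q_{\Delta\xi}$ used in Theorem \ref{th:proppresbar} automatically holds for the truncated terms, and the entire argument reduces to a transcription.

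For point (ii), I would first revisit the computation \eqref{eq:compIpres}: inspecting it shows that the $P$- and $Q$-contributions cancel in pairs, so that $\frac{d}{dt}I(Y(t))=0$ holds for \emph{any} choice of the nonlocal terms, in particular for $P_{\{\Delta\xi,R\}}, Q_{\{\Delta\xi,R\}}$; hence $I(Y_0)=0$ is preserved. For points (i) and (iii), the arguments of Lemma \ref{lem:presprop} and Theorem \ref{th:proppresbar} use only the $L^\infty$ boundedness of the nonlocal terms together with the semi-linearity of the $q,w,h$ equations. As $\norm{Q_{\{\Delta\xi,R\}}}_{L^\infty}\leq\norm{Q_{\Delta\xi}}_{L^\infty}\leq C$ (and similarly for $P$), those Gronwall arguments carry over verbatim, yielding the $L^\infty$ bounds (i) and the positivity statement (iii).

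For the decay property (iv), I would follow the proof of Theorem \ref{th:proppresbar}, whose only substantial inputs were the weighted estimates \eqref{eq:L1PQbar}, \eqref{eq:decexpQPbar} (exponential case) and \eqref{eq:needprpol} (polynomial case) on $P_{\Delta\xi}, Q_{\Delta\xi}$. For each weight $\omega\in\{1,\e^{\abs{\xi}},(1+\abs{\xi})^\alpha\}$ occurring there, the identity $\omega(\xi)\abs{Q_{\{\Delta\xi,R\}}(\xi)}=\omega(\xi)\abs{Q_{\Delta\xi}(\xi)}\chi_{[-R,R)}(\xi)\leq\omega(\xi)\abs{Q_{\Delta\xi}(\xi)}$ shows that every weighted $L^1$ and $L^\infty$ norm of the truncated term is dominated by the corresponding norm of $Q_{\Delta\xi}$ already controlled in Theorem \ref{th:proppresbar}, and likewise for $P$. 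Feeding these into the same Gronwall estimates gives \eqref{eq:decGbarexp} and \eqref{eq:decGbarpol} for \eqref{eq:sysbarr}. I do not expect a genuine obstacle here: the proof is a transfer argument, and the only two points that must be checked are that $\lP_R$ never increases any of the norms driving the estimates, and that the invariant is structurally insensitive to the form of the nonlocal terms. Once these are in place, every step is a direct copy of the corresponding step for \eqref{eq:sysbar}.
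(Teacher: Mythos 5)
Your proposal is correct and matches the paper's (implicit) argument: the paper states this theorem without proof, precisely because the cutoff $\lP_R$ is a pointwise truncation that never increases any of the (weighted) norms driving the estimates of Theorem \ref{th:proppresbar}, and the invariant computation \eqref{eq:compIpres} is algebraically insensitive to the form of the nonlocal terms. Your two checkpoints --- the pointwise domination $\abs{\lP_R[f]}\leq\abs{f}$ and the structural cancellation in $\frac{d}{dt}I$ --- are exactly the right ones.
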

Finally, for any initial data in $Y_0\in F^e$, resp. 
$Y_0\in F^\alpha$, we
obtain the following error estimate for bounded
solutions.
\begin{theorem}
  Given $Y_0$ and $Y_{0,\Delta\xi,R}$ in $F^e$, let $Y(t)$
  and $Y_{\{\Delta\xi,R\}}(t)$ be the short-time solutions of
  \eqref{eq:sysban} and \eqref{eq:sysbarr},
  respectively, with initial data $Y_0$ and $Y_{0,\Delta\xi,R}$, respectively. If we have
  \begin{equation*}
    \norm{Y(t)}_{F^e}\leq M\text{ and }\norm{Y_{\{\Delta\xi,R\}}(t)}_F\leq M\quad\text{ for all }t\in[0,T], 
  \end{equation*}
  then we have
  \begin{equation}
    \label{eq:compares}
    \sup_{t\in[0,T]}\norm{Y(t,\cdot)-Y_{\{\Delta\xi,R\}}(t,\cdot)}_{F}\leq 
    C\big(\norm{Y_0-Y_{0,\Delta\xi,R}}_{F}+\sqrt{\Delta\xi}+\e^{-R}\big),
  \end{equation}
  where the constant $C$ depends only on $M$. For
  $Y_0$ and $Y_{0,\Delta\xi,R}$ in $F^\alpha$, we
  have that if
  \begin{equation*}
    \norm{Y(t)}_{F^\alpha}\leq M\text{ and }\norm{Y_{\{\Delta\xi,R\}}(t)}_F\leq M\quad\text{ for all }t\in[0,T], 
  \end{equation*}
  then
  \begin{equation}
    \label{eq:compares2}
    \sup_{t\in[0,T]}\norm{Y(t,\cdot)-Y_{\{\Delta\xi,R\}}(t,\cdot)}_{F}\leq 
    C\big(\norm{Y_0-Y_{0,\Delta\xi,R}}_{F}+\sqrt{\Delta\xi}+\frac1{R^{\alpha/2}}\big).
  \end{equation}
\end{theorem}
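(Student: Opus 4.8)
The plan is to follow the argument of Theorem~\ref{th:convbar}, replacing the single approximation step by a threefold telescoping. Writing both trajectories in integral form and subtracting, we have
\begin{equation*}
  Y(t)-Y_{\{\Delta\xi,R\}}(t)=Y_0-Y_{0,\Delta\xi,R}+\int_0^t\bigl(G(Y(\tau))-G_{\{\Delta\xi,R\}}(Y_{\{\Delta\xi,R\}}(\tau))\bigr)\,d\tau.
\end{equation*}
First I would split the integrand by inserting the two intermediate operators $G$ and $G_{\Delta\xi}$ evaluated at the fully discrete solution (all functions below are taken at time $\tau$):
\begin{align*}
  G(Y)-G_{\{\Delta\xi,R\}}(Y_{\{\Delta\xi,R\}})
  &=\bigl(G(Y)-G(Y_{\{\Delta\xi,R\}})\bigr)
    +\bigl(G(Y_{\{\Delta\xi,R\}})-G_{\Delta\xi}(Y_{\{\Delta\xi,R\}})\bigr)\\
  &\quad+\bigl(G_{\Delta\xi}(Y_{\{\Delta\xi,R\}})-G_{\{\Delta\xi,R\}}(Y_{\{\Delta\xi,R\}})\bigr).
\end{align*}

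The three terms are then controlled by estimates already established. The first term is bounded by $C(M)\norm{Y-Y_{\{\Delta\xi,R\}}}_F$ using the Lipschitz property of $G$ on $B_M$ furnished by Proposition~\ref{prop:LipG}, since both solutions are assumed to remain in $B_M$. The second term is bounded by $C\sqrt{\Delta\xi}$ by the approximation estimate \eqref{eq:estdiffGbarG} of Lemma~\ref{lem:LipGbar}(ii), applied at the argument $Y_{\{\Delta\xi,R\}}(\tau)$. The third term is bounded by $C\e^{-R}$ (respectively $C(\sqrt{\Delta\xi}+R^{-\alpha/2})$ in the polynomial case) by Lemma~\ref{lem:LipGbarr}(ii) (respectively (iii)), applied again at $Y_{\{\Delta\xi,R\}}(\tau)$.

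The step I expect to require the most care is verifying that this last application is legitimate: Lemma~\ref{lem:LipGbarr}(ii)--(iii) can only be invoked when its argument lies in $F^e$ (respectively $F^\alpha$) with a controlled norm, whereas the running hypothesis supplies only the plain bound $\norm{Y_{\{\Delta\xi,R\}}(t)}_F\leq M$. The resolution is exactly the remark following Lemma~\ref{lem:LipGbarr}, which guarantees $\sup_{t\in[0,T]}\norm{Y_{\{\Delta\xi,R\}}(t)}_{F^e}\leq C$ and $\sup_{t\in[0,T]}\norm{Y_{\{\Delta\xi,R\}}(t)}_{F^\alpha}\leq C$; hence the constant produced by Lemma~\ref{lem:LipGbarr} stays controlled along the whole discrete trajectory, and the bookkeeping of which decay space the discrete solution inhabits is the only genuine subtlety.

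Collecting the three bounds gives
\begin{equation*}
  \norm{Y(t)-Y_{\{\Delta\xi,R\}}(t)}_F\leq\norm{Y_0-Y_{0,\Delta\xi,R}}_F+C\int_0^t\norm{Y(\tau)-Y_{\{\Delta\xi,R\}}(\tau)}_F\,d\tau+CT\bigl(\sqrt{\Delta\xi}+\e^{-R}\bigr),
\end{equation*}
and an application of Gronwall's Lemma yields \eqref{eq:compares}. The polynomial estimate \eqref{eq:compares2} is proved identically, replacing Lemma~\ref{lem:LipGbarr}(ii) by (iii) so that the last additive term becomes $CT(\sqrt{\Delta\xi}+R^{-\alpha/2})$. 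Beyond the decay-norm bookkeeping flagged above, I do not anticipate any real difficulty, as each ingredient is already in place.
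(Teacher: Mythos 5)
Your overall strategy --- telescoping the integrand and closing with Gronwall --- is the same as the paper's, but your telescoping is anchored at the wrong argument, and this creates a genuine problem with the stated dependence of the constant. You insert the intermediate operators at $Y_{\{\Delta\xi,R\}}(\tau)$, so your third term $G_{\Delta\xi}(Y_{\{\Delta\xi,R\}})-G_{\{\Delta\xi,R\}}(Y_{\{\Delta\xi,R\}})$ must be controlled by Lemma~\ref{lem:LipGbarr}~(ii) (resp.~(iii)), whose constant depends on $\norm{Y_{\{\Delta\xi,R\}}(\tau)}_{F^e}$ (resp.\ $\norm{Y_{\{\Delta\xi,R\}}(\tau)}_{F^\alpha}$). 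The hypotheses, however, only give $\norm{Y_{\{\Delta\xi,R\}}(t)}_F\leq M$: the decay norm is assumed bounded by $M$ only for the \emph{exact} solution $Y(t)$. You correctly flagged this as the delicate point, but the remark after Lemma~\ref{lem:LipGbarr} that you invoke to resolve it yields a bound on $\sup_{t\in[0,T]}\norm{Y_{\{\Delta\xi,R\}}(t)}_{F^e}$ in terms of $\norm{Y_{0,\Delta\xi,R}}_{F^e}$ and $T$, not in terms of $M$. Since $\norm{Y_{0,\Delta\xi,R}}_{F^e}$ is nowhere assumed to be controlled by $M$, your argument proves \eqref{eq:compares} only with a constant depending additionally on $\norm{Y_{0,\Delta\xi,R}}_{F^e}$, which is weaker than the claim that $C$ depends only on $M$.

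The paper's proof avoids this by ordering the telescoping the other way: it inserts $G_{\Delta\xi}$ and $G_{\{\Delta\xi,R\}}$ at the exact solution, i.e.\ it estimates $G(Y)-G_{\Delta\xi}(Y)$ and $G_{\Delta\xi}(Y)-G_{\{\Delta\xi,R\}}(Y)$ with $Y=Y(\tau)$ --- for which the hypothesis does supply $\norm{Y(\tau)}_{F^e}\leq M$ --- and performs the Lipschitz comparison of the two trajectories \emph{last}, using the operator $G_{\{\Delta\xi,R\}}$ and Lemma~\ref{lem:LipGbarr}~(i), which requires only the plain $F$-norm bound on both arguments. With that reordering your proof goes through verbatim and yields the theorem exactly as stated; as written, it does not.
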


\begin{proof}
  We have 
  \begin{multline}
    \label{eq:diffbd}
    \norm{Y(t,\cdot)-Y_{\{\Delta\xi,R\}}(t,\cdot)}_{F}\leq
    \norm{Y_0-Y_{0,\Delta\xi,R}}_{F}+\\\int_{0}^t\norm{G(Y(\tau,\cdot))-
      G_{\{\Delta\xi,R\}}(Y_{\{\Delta\xi,R\}}(\tau,\cdot))}_{F}\,
    d\tau.
  \end{multline}
  By Proposition~\ref{prop:LipG} and Lemmas~
  \ref{lem:LipGbar} and~\ref{lem:LipGbarr}, we get
  \begin{align*}
    \|G(Y(\tau,\cdot))-G_{\{\Delta\xi,R\}}(Y_{\{\Delta\xi,R\}}&(\tau,\cdot))\|_{F}\\
    &\leq
    \norm{G(Y(\tau,\cdot))-G_{\Delta\xi}(Y(\tau,\cdot))}_{F}\\
    &\quad+\norm{G_{\Delta\xi}(Y(\tau,\cdot))-G_{\{\Delta\xi,R\}}(Y(\tau,\cdot))}_{F}\\
    &\quad+\norm{G_{\{\Delta\xi,R\}}(Y(\tau,\cdot))-G_{\{\Delta\xi,R\}}(Y_{\{\Delta\xi,R\}}(\tau,\cdot))}_{F}\\
    &\leq C\big((\Delta\xi)^{\sfrac{1}{2}}+
    \e^{-R}+\norm{Y(\tau,\cdot)-Y_{\{\Delta\xi,R\}}(\tau,\cdot)}_{F}\big)
  \end{align*}
  for a constant $C$ which depends only on
  $M$. Hence, \eqref{eq:compares} follows from
  \eqref{eq:diffbd} after applying Gronwall's
  Lemma. The proof of \eqref{eq:compares2} is
  similar.
\end{proof}


\section{Approximation of the initial data and
  Convergence of the Semi-Discrete solutions }
\label{sect-appdata}

\subsection{Approximation of the initial data}
\label{subsec:appinitdat}

The construction of the initial data
$Y_{0,\Delta_\xi,R}$ is done in two steps. First,
we change variable from Eulerian to Lagrangian,
that is, we compute $Y_0\in\G$ such that
$X=(y_0,U_0,H_0)\in\F$ satisfies
\begin{equation}
  \label{eq:U0u0}
  U_0=u_0\circ y_0.
\end{equation}
In the new set of variables, we can solve
\eqref{eq:sysban} or, rather, its discretisation
\eqref{eq:sysbarr}. Note that, given $u_0\in H^1(\R)$,
there exists several $Y_0\in\G$ such that
\eqref{eq:U0u0} holds. This is a consequence of
relabeling invariance and this fact will be used
in the numerical examples of Section
\ref{sect-numexp}. Here, we present a framework
valid for general initial data in $H^1(\R)$. In
Section \ref{sect-semigroup}, we define the
mapping $L$ from $\D$ to $\F$.  For $u_0\in H^1(\R)$
and $\mu_0$ absolutely continuous, it simplifies
and reads
\begin{subequations}
  \label{eq:initdatex}
  \begin{equation}
    y_0(\xi)+\int_{-\infty}^{y_0(\xi)}(u_0^2+u_{0,x}^2)\,dx=\xi,
  \end{equation}
  \begin{equation}
    U_0=u_0\circ y\quad\text{ and }\quad H_0=\id{}-y_0.
  \end{equation}
  Then, we set
  \begin{equation}
    q_0=y_{0,\xi},\quad w=U_{0,\xi},\quad h=H_{0,\xi}.
  \end{equation}
\end{subequations}
As earlier, we denote $v_0=1-q_0$ and
$\zeta_0=\id-y_0$. We have
\begin{equation}
  \label{eq:stposq}
  h_0q_0=q_0^2U_0^2+w_0^2,\quad q_0+h_0=1,\quad q_0>0,\quad  h_0\geq0 
  \quad \text{ for almost every $\xi\in\Real$.}
\end{equation}
The element $Y_0=(y_0,U_0,H_0,q_0,w_0,h_0)$
belongs to $\G$. The second step consists of
computing an approximation of $Y_0$ in
$F_{\{\Delta\xi,R\}}$. In the following theorem,
we show how the change of variable given by
\eqref{eq:initdatex} deal with the decay
conditions. For simplicity, we drop the subscript
zero in the notation. Let us introduce the Banach
spaces $H^{1,e}$ and $H^{1,\alpha}$ as the
subspaces of $H^1$ with respective norms
\begin{equation*}
  \norm{u}_{H^{1,e}}^2=\norm{\e^{\abs{\frac{\xi}2}}u}_{L^2}^2+\norm{\e^{\abs{\frac{\xi}2}}u_x}_{L^2}^2
\end{equation*}
and
\begin{equation*}
  \norm{u}_{H^{1,\alpha}}^2=\norm{(1+\abs{\xi})^{\frac{\alpha}{2}}u}_{L^2}^2+\norm{(1+\abs{\xi})^{\frac{\alpha}{2}}u_x}_{L^2}^2.
\end{equation*}
\begin{theorem}
  Given $u$ and $Y$ as given by
  \eqref{eq:initdatex}, we have
  \begin{enumerate}
  \item[(i)] $u\in H^{1,e}$ if and only
    if $Y\in F^e$,
  \item[(ii)] $u\in H^{1,\alpha}$ if and only if
    $Y\in F^\alpha$.
  \end{enumerate}
\end{theorem}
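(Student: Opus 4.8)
The plan is to transport every weighted integral between the Lagrangian variable $\xi$ and the Eulerian variable $x=y(\xi)$ through the change of variables encoded in \eqref{eq:initdatex}. Differentiating the defining relation $\xi=y(\xi)+\int_{-\infty}^{y(\xi)}(u^2+u_x^2)\,dx$ gives $q=y_\xi=\bigl(1+(u^2+u_x^2)\circ y\bigr)\inv$, so that $0<q\le1$ and $dx=q\,d\xi$; together with $H=\id-y$ and $U=u\circ y$ this yields the relations already recorded in \eqref{eq:stposq}, namely $h=1-q=q\,(u^2+u_x^2)\circ y$ and $w=(u_x\circ y)\,q$, whence $w^2\le qh\le h\le1$. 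In particular $q,w,h\in\Linf(\R)$ is automatic as soon as $u\in\Hone(\R)$, so the $L^\infty$ conditions in the definitions of $F^e$ and $F^\alpha$ come for free, and only the weighted integrability of $U$, $w$ and $h$ must be matched to that of $u$ and $u_x$.

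First I would record the uniform bound $0\le\xi-y(\xi)=\int_{-\infty}^{y(\xi)}(u^2+u_x^2)\,dx\le\norm{u}_{\Hone}^2=:C_0$, which holds because the total energy is finite. This makes the two weight families comparable: since $\bigl\lvert\abs{\xi}-\abs{y(\xi)}\bigr\rvert\le C_0$ we have $\e^{-C_0}\e^{\abs{\xi}}\le\e^{\abs{y(\xi)}}\le\e^{C_0}\e^{\abs{\xi}}$, and likewise $(1+\abs{\xi})^\alpha$ and $(1+\abs{y(\xi)})^\alpha$ agree up to the factor $(1+C_0)^\alpha$. I would carry out case (i) in detail; case (ii) is verbatim the same after replacing $\e^{\abs{\cdot}}$ by $(1+\abs{\cdot})^\alpha$ and using the polynomial comparison constant.

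Next I would estimate the three components in both directions by substitution. For the energy density, using $\e^{\abs{\xi}}\asymp\e^{\abs{y}}$, $h=q\,(u^2+u_x^2)\circ y$ and $q\,d\xi=dx$, the integral $\int_\R\e^{\abs{\xi}}h\,d\xi$ is comparable to $\int_\R\e^{\abs{x}}(u^2+u_x^2)\,dx=\norm{u}_{H^{1,e}}^2$, which ties $\e^{\abs{\xi}}h\in L^1(\R)$ to both weighted integrals defining $H^{1,e}$. For $w$, one direction is immediate from $w^2=(u_x\circ y)^2q^2$ and $q\le1$, giving $\int_\R\e^{\abs{\xi}}w^2\,d\xi\le\e^{C_0}\int_\R\e^{\abs{x}}u_x^2\,dx$; the reverse uses $w^2/q\le h$ to write $\int_\R\e^{\abs{x}}u_x^2\,dx=\int_\R\e^{\abs{y}}(w^2/q)\,d\xi\le\e^{C_0}\int_\R\e^{\abs{\xi}}h\,d\xi$. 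Similarly $\int_\R\e^{\abs{x}}u^2\,dx=\int_\R\e^{\abs{y}}U^2q\,d\xi\le\e^{C_0}\int_\R\e^{\abs{\xi}}U^2\,d\xi$ handles one direction for $U$ using $q\le1$.

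The one component requiring genuine care, and the main obstacle, is the reverse bound for $U$: substituting back introduces the Jacobian $q\inv=1+(u^2+u_x^2)\circ y$, so that
\[
\int_\R\e^{\abs{\xi}}U^2\,d\xi\le\e^{C_0}\int_\R\e^{\abs{x}}u^2\bigl(1+u^2+u_x^2\bigr)\,dx.
\]
The extra factor could in principle blow up where $q$ is small, but it is harmless because $u\in\Hone(\R)\hookrightarrow\Linf(\R)$: one bounds $\int_\R\e^{\abs{x}}u^2(u^2+u_x^2)\,dx\le\norm{u}_{\Linf}^2\norm{u}_{H^{1,e}}^2$, so the right-hand side is controlled by $(1+\norm{u}_{\Linf}^2)\norm{u}_{H^{1,e}}^2$ and is finite whenever $u\in H^{1,e}$. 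Invoking the $\Linf$ bound on $u$, rather than attempting to control the small-$q$ region directly, is the device that closes the argument; the identical computation with $(1+\abs{\xi})^\alpha$ in place of $\e^{\abs{\xi}}$ completes case (ii).
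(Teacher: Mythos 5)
Your proof is correct and follows essentially the same route as the paper: change variables via $dx=q\,d\xi$, compare the two weight families using $\norm{y-\id}_{L^\infty}\le\norm{u}_{H^1}^2$, and reduce each component to the weighted energy. The only difference is cosmetic: where you absorb the Jacobian $q^{-1}=1+(u^2+u_x^2)\circ y$ in the $U$-term and invoke $\norm{u}_{L^\infty}$ globally, the paper splits $\Real$ into $\{q<\frac12\}$ and $\{q>\frac12\}$ and uses $\norm{U}_{L^\infty}$ on the first piece --- the same estimate in different clothing.
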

\begin{proof}
  Let us assume that $u\in H^{1,e}$. By
  definition, we have $h=(u^2+u_x^2)\circ
  yy_\xi$. Hence,
  \begin{align*}
    \int_{\Real}\e^{\abs{\xi}}h(\xi)\,d\xi&=\int_{\Real}\e^{\abs{\xi}}(u^2+u_x^2)\circ
    y(\xi)y_\xi(\xi)\,d\xi\\
    &=\int_{\Real}\e^{\abs{y^{-1}(x)}}(u^2+u_x^2)(x)\,dx\\
    &=\int_{\Real}\e^{\abs{y^{-1}(x)-x}}\e^{\abs{x}}(u^2+u_x^2)(x)\,dx\\
    &\leq\e^{\norm{y(\xi)-\xi}_{L^\infty}}\int_{\Real}\e^{\abs{x}}(u^2+u_x^2)(x)\,dx<\infty.
  \end{align*}
  Using \eqref{eq:stposq}, we get
  \begin{equation*}
    \int_{\Real}\e^{\abs{\xi}}w^2(\xi)\,d\xi\leq\norm{q}_{L^\infty}\int_{\Real}\e^{\abs{\xi}}h(\xi)\,d\xi<\infty.
  \end{equation*}
  In order to prove that
  $\int_{\Real}\e^{\abs{\xi}}U^2\,d\xi$ is finite,
  we decompose the integral as follows:
  \begin{equation*}
    \int_{\Real}\e^{\abs{\xi}}U^2\,d\xi=\int_{\{\xi\in\Real|q<\frac12\}}\e^{\abs{\xi}}U^2\,d\xi+\int_{\{\xi\in\Real|q>\frac12\}}\e^{\abs{\xi}}U^2\,d\xi.
  \end{equation*}
  We have
  \begin{align*}
    \int_{\{\xi\in\Real|q<\frac12\}}\e^{\abs{\xi}}U^2\,d\xi&\leq
    \norm{U}_{L^\infty}^2\int_{\{\xi\in\Real|q<\frac12\}}\e^{\abs{\xi}}\,d\xi\\
    &\leq \norm{U}_{L^\infty}^2\int_{\{\xi\in\Real|h>\frac12\}}\e^{\abs{\xi}}\,d\xi,\quad\text{ as $q+h=1$},\\
    &\leq2
    \norm{U}_{L^\infty}^2\int_{\{\xi\in\Real|h>\frac12\}}h\e^{\abs{\xi}}\,d\xi\leq
    C\int_{\Real}\e^{\abs{\xi}}h\,d\xi<\infty
  \end{align*}
  and
  \begin{align*}
    \int_{\{\xi\in\Real|q>\frac12\}}\e^{\abs{\xi}}U^2\,d\xi&\leq 2 \int_{\{\xi\in\Real|q>\frac12\}}\e^{\abs{\xi}}\frac{U^2}{q}\,d\xi\\
    &\leq2
    \int_{\{\xi\in\Real|q>\frac12\}}\e^{\abs{\xi}}qh\,d\xi,\quad\text{
      as $U^2\leq qh$ by \eqref{eq:stposq}},\\
    &<\infty.
  \end{align*}
  Hence,
  $\int_{\Real}\e^{\abs{\xi}}U^2\,d\xi<\infty$. Let
  us now assume that $Y\in F^e$. Then, 
  \begin{align*}
    \int_{\Real}\e^{\abs{x}}(u^2+u_x^2)(x)\,dx&=\int_\Real\e^{\abs{y(\xi)}}(u^2+u_x^2)(y(\xi))y_\xi(\xi)\,d\xi\\
    &=\int_\Real\e^{\abs{y(\xi)}}h(\xi)\,d\xi\\
    &\leq\int_\Real\e^{\abs{y(\xi)-\xi}}\e^{\abs{\xi}}h(\xi)\,d\xi\\
    &\leq
    \e^{(\norm{y(\xi)-\xi}_{L^\infty})}\int_{\Real}\e^{\abs{\xi}}h(\xi)\,d\xi<\infty
  \end{align*}
  and $u_0\in H^{1,e}$. The case (ii) is proved in
  the same way.
\end{proof}
As a consequence of this theorem and Theorem
\ref{th:decay}, we obtain 
\begin{theorem}
  \label{th:predeceul}
  The spaces $H^{1,e}$ and $H^{1,\alpha}$ are
  preserved by the hyperelastic rod equation: If
  $u_0\in H^{1,e}$, then $u(t,\cdot)\in H^{1,e}$
  for all positive time and, similarly, if $u_0\in
  H^{1,\alpha}$, then $u(t,\cdot)\in H^{1,\alpha}$
  for all positive time.
\end{theorem}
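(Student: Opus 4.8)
The plan is to transport the weighted integrability along the chain of maps $u_0\mapsto Y_0=L(u_0,\muac)\mapsto Y(t)=S_tY_0\mapsto (u(t,\cdot),\mu(t))=M(Y(t))$, invoking the preceding theorem at the two ends and Theorem \ref{th:decay} in the middle. Throughout, recall that $\norm{u}_{H^{1,e}}^2=\int_\R\e^{\abs{x}}(u^2+u_x^2)\,dx$ and that $u(t,\cdot)\in\Hone(\R)$ already by Theorem \ref{th:main}, so that the only thing to check is finiteness of this weighted integral.

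First I would fix $u_0\in H^{1,e}$. Since $\mu_0=\muac$ is absolutely continuous, $L$ reduces to the explicit change of variables \eqref{eq:initdatex}, producing $Y_0\in\G$; by part (i) of the preceding theorem, $Y_0\in F^e$. Because $Y_0\in\G$, the solution $Y(t)=S_tY_0$ of \eqref{eq:sysban} is global (Theorem \ref{th:globsol}), and Theorem \ref{th:decay} shows that $F^e$ is preserved, with $\norm{Y(t,\cdot)}_{F^e}$ finite on every bounded interval $[0,T]$ (extend the short-time estimate by the usual continuation argument). In particular $\norm{\zeta(t,\cdot)}_{\Linf}$ and $\norm{\e^{\abs{\xi}}h(t,\cdot)}_{L^1}$ stay finite for all $t\geq0$.

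It remains to deduce $u(t,\cdot)\in H^{1,e}$ from $Y(t)\in F^e$, and this is where the one real difficulty appears: the preceding theorem is stated only for data whose measure is absolutely continuous (its proof uses $h=(u^2+u_x^2)\circ y\,y_\xi$), whereas at positive times $\mu(t)$ may develop a singular part, so its ``if'' implication cannot be quoted verbatim. I would instead reprove that implication using only the domination $\muac\leq\mu$ together with the push-forward identity $\mu=y_\#(h\,d\xi)$ from \eqref{eq:umudef2} (here $h=H_\xi$). Since $\abs{y(\xi)}\leq\abs{\xi}+\norm{\zeta(t,\cdot)}_{\Linf}$, for every $t$,
\begin{align*}
  \int_\R\e^{\abs{x}}(u^2+u_x^2)\,dx
  &=\int_\R\e^{\abs{x}}\,d\muac
  \leq\int_\R\e^{\abs{x}}\,d\mu
  =\int_\R\e^{\abs{y(\xi)}}h(\xi)\,d\xi\\
  &\leq\e^{\norm{\zeta(t,\cdot)}_{\Linf}}\int_\R\e^{\abs{\xi}}h(\xi)\,d\xi<\infty,
\end{align*}
the final bound being exactly the $F^e$-estimate of the previous step. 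Hence $u(t,\cdot)\in H^{1,e}$.

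The polynomially weighted case is identical: replace $\e^{\abs{\cdot}}$ by $(1+\abs{\cdot})^\alpha$, use part (ii) of the preceding theorem and the $F^\alpha$ part of Theorem \ref{th:decay}, and in place of the exponential inequality bound $(1+\abs{y(\xi)})^\alpha\leq(1+\norm{\zeta(t,\cdot)}_{\Linf})^\alpha(1+\abs{\xi})^\alpha$. The main obstacle is thus entirely the singular-part gap described above; once one notices that the inequality $\muac\leq\mu$ makes the back-transformation estimate insensitive to concentration of the energy, the proof is just the routine chaining of the preceding theorem with Theorem \ref{th:decay}.
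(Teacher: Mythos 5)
Your proof is correct and follows essentially the same route as the paper, which simply chains the equivalence $u\in H^{1,e}\Leftrightarrow Y\in F^e$ (and its $\alpha$ analogue) with the propagation of $F^e$ and $F^\alpha$ by the flow from Theorem \ref{th:decay}. Your extra step handling a possible singular part of $\mu(t)$ at positive times via $\muac\leq\mu$ and the push-forward identity \eqref{eq:umudef2} is a legitimate refinement of the back-transformation estimate, which the paper states only for the absolutely continuous case; it does not change the overall argument.
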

To the best of our knowledge, these decay results 
are new, even for the
Camassa-Holm equation (case $\gamma=1$). They have
to be compared with \cite{MR2160386} where it is
established that the only solution which has
compact support for all positive time is the zero
solution, i.e., the compactness of the support
(which is a kind of decay condition) is
\textit{not} preserved by the equation.

Let us now construct the approximating sequence
for the initial data. From \eqref{eq:stposq}, we
get that
\begin{equation*}
  0\leq q\leq 1,\quad 0\leq h\leq 1
\end{equation*}
and
\begin{equation}
  \label{eq:boundUxi}
  U_\xi=w\leq \sqrt{hq}\leq\frac12(h+q)=\frac12.
\end{equation}
Given an integer $n$, we consider $\Delta\xi$ and
$R$ such that
$\frac1n=\frac1R+\Delta\xi=\frac1R+\frac{R}{N}$ so
that $n\to\infty$ if and only if $\Delta\xi\to0$
and $R\to\infty$. We introduce the mapping
$\I_{\Delta\xi}:L^2\to L^2$ which approximates
$L^2$ functions by piecewise constant functions,
that is, given $f\in L^2$, let
\begin{equation*}
  \bar f_i=\frac{1}{\Delta\xi}\int_{\xi_i}^{\xi_{i+1}}f(\xi)\,d\xi
\end{equation*}
and set
\begin{equation*}
  \I_{\Delta\xi}(f)(\xi)=\sum_{i=-\infty}^{\infty} 
  \bar f_i\cdot
  \chi_{[\xi_i,\xi_{i+1})}(\xi).
\end{equation*}
We define $Y_n=(y_n,U_n,H_n,q_n,w_n,h_n)$ as
follows. Let
\begin{equation*}
  v_n(\xi)=\lP_R\I_{\Delta\xi}(v),\quad
  w_n(\xi)=\lP_R\I_{\Delta\xi}(w),\quad
  h_n(\xi)=\lP_R\I_{\Delta\xi}(h).
\end{equation*}
As usual, we denote $q=1+v$ and $q_n=1+v_n$. Let
us define the weighted integrals
\begin{equation*}
  U_{i,n}=\frac{\int_{\xi_i}^{\xi_{i+1}}q^2U\,d\xi}{\int_{\xi_i}^{\xi_{i+1}}q^2\,d\xi}.
\end{equation*}
We set
\begin{equation*}
  U_n(\xi)=\sum_{i=-N}^{N-1}  U_{i,n}\cdot
  \chi_{[\xi_i,\xi_{i+1})}(\xi),\quad\text{ for }i=-N,\ldots,N-1.
\end{equation*}
We define
\begin{equation*}
  H_n(\xi)=\lP\left(\int_{-\infty}^{\xi}h_n\,d\eta\right)\quad\text{ if }\quad\xi\in[-R,R]
\end{equation*}
and $H_n(\xi)=\int_{-\infty}^{-R}h_n\,d\eta$ if
$\xi\in(-\infty,-R)$,
$H_n(\xi)=\int_{R}^{\infty}h_n\,d\eta$ if
$\xi\in(R,\infty)$. For $y_n$, we set
\begin{equation*}
  y_n(\xi)=\xi-H_n(\xi)\quad\text{ if }\quad\xi\in[-R,R]
\end{equation*}
and $y_n(\xi)=\xi-H_n(-R)$ if
$\xi\in(-\infty,-R)$, $y_n(\xi)=\xi-H_n(R)$ if
$\xi\in(R,\infty)$. The definition of $\lP$ is
given in the proof of Lemma~\ref{lem:LipGbar}.
The following theorem states that $Y_n$
approximates $Y$ in $F_{\{\Delta\xi,R\}}$ and
satisfies additional properties which will be
useful in Theorem \ref{th:fullinv}, where we prove
that the positivity of the energy is preserved by the 
numerical scheme.
\begin{theorem}\label{th:init}
  Given $Y\in\G$, there exist a sequence $Y_n\in F_{\{\Delta\xi,R\}}$ 
  such that 
  \begin{subequations}
    \label{eq:condappseq}
    \begin{equation}
      \label{eq:convYn}
      \lim_{n\to\infty}\norm{Y_n-Y}_{F}=0,
    \end{equation}
    and
    \begin{equation}
      \label{eq:idynhn}
      q_nh_n\geq U_n^2q_n^2+w_n^2,\quad q_n+h_n=1, \text{  for all $n\geq 0$ 
        and for all $\xi$}.
    \end{equation}
    Moreover, we have
    \begin{equation}
      \label{eq:bdappsedec}
      \norm{Y_n}_{F^e}\leq
      C\norm{Y}_{F^e}\text{ and }\norm{Y_n}_{F^\alpha}\leq
      C\norm{Y}_{F^\alpha}
    \end{equation}
  \end{subequations}
  for $Y\in F^e$, resp. $Y\in F^\alpha$, and where 
  the constant $C$ which does not depend on $Y$
  and $n$.
\end{theorem}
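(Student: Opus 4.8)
The plan is to verify the three requirements \eqref{eq:convYn}, \eqref{eq:idynhn} and \eqref{eq:bdappsedec} directly for the explicit sequence $Y_n$ constructed just before the statement, treating the algebraic constraint \eqref{eq:idynhn} as the heart of the matter and deriving the convergence and the decay bounds from the approximation properties of $\lP$, $\lP_R$ and $\I_{\Delta\xi}$ established in Section~\ref{sect-semi}. Throughout, on a fixed cell $[\xi_i,\xi_{i+1})$ inside $[-R,R]$ I write $\langle f\rangle=\frac1{\Delta\xi}\int_{\xi_i}^{\xi_{i+1}}f\,d\xi$, so that $q_n=\langle q\rangle$, $h_n=\langle h\rangle$, $w_n=\langle w\rangle$ are cell averages while $U_n=\langle q^2U\rangle/\langle q^2\rangle$ is the $q^2$-weighted average.

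For the constraint \eqref{eq:idynhn}, which I expect to be the main obstacle, I start from the pointwise identities $qh=q^2U^2+w^2$ and $q+h=1$ valid since $Y\in\G$, see \eqref{eq:stposq}; eliminating $h$ gives $q=q^2(1+U^2)+w^2$. Averaging $q+h=1$ yields $q_n+h_n=1$ at once (outside $[-R,R]$ the cut-off forces $q_n=1$, $h_n=0$, so the identity persists), while averaging the identity for $q$ gives $q_n=\langle q^2\rangle+\langle q^2U^2\rangle+\langle w^2\rangle$. Using $h_n=1-q_n$, the target inequality $q_nh_n\geq U_n^2q_n^2+w_n^2$ is equivalent to $q_n\geq q_n^2+q_n^2U_n^2+w_n^2$, so by the displayed formula for $q_n$ it suffices to prove the three cell inequalities $\langle q^2\rangle\geq q_n^2$, $\langle w^2\rangle\geq w_n^2$ and $\langle q^2U^2\rangle\geq q_n^2U_n^2$. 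The first two are Jensen's inequality for $x\mapsto x^2$. For the third I would combine the Cauchy--Schwarz inequality $\langle q^2U\rangle^2=\langle q\cdot qU\rangle^2\leq\langle q^2\rangle\langle q^2U^2\rangle$ with $q_n^2=\langle q\rangle^2\leq\langle q^2\rangle$ to get $q_n^2U_n^2=q_n^2\langle q^2U\rangle^2/\langle q^2\rangle^2\leq\langle q^2U^2\rangle$. This step is precisely where the $q^2$-weighting in the definition of $U_{i,n}$ is indispensable.

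For the convergence \eqref{eq:convYn} I would treat each component of the $F$-norm separately. The operators $\I_{\Delta\xi}$ and $\lP_R$ converge strongly to the identity on $L^2(\Real)$ (the first because cell averaging tends to the identity, the second because $\int_{\abs{\xi}\geq R}\abs{f}^2\to0$), so $v_n,w_n,h_n\to v,w,h$ in $\Ltwo$. Since $Y\in\G$ forces $H\in\Linf$ with $h\geq0$, the cumulative energy $H$ is bounded and nondecreasing, hence $h\in L^1(\R)$; thus $h_n\to h$ also in $L^1$, and because $H_n$ is a piecewise-constant projection of the primitive of $h_n$, one obtains $\norm{H_n-H}_{\Linf}\to0$ with an extra projection error of order $\Oh(\Delta\xi)$ (as $0\leq h_n\leq1$); the same estimate controls $\zeta_n$. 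For $U_n$ I would bound $\abs{U_n-U}$ on each cell by the oscillation $\sqrt{\Delta\xi}\,\snorm{w}_{\Ltwo}$ (Cauchy--Schwarz, as in the proof of Lemma~\ref{lem:LipGbar}) on $[-R,R]$ and by $\sup_{\abs{\xi}\geq R}\abs{U}$ outside, both of which vanish in the limit, giving convergence in $\Ltwo\cap\Linf$.

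Finally, for the decay bounds \eqref{eq:bdappsedec} I would show that $\I_{\Delta\xi}$, $\lP_R$ and the weighted averaging defining $U_n$ are bounded, uniformly in $n$, for the weighted norms. The key observation is that on each cell the weight can be frozen at the grid point, since $\e^{\abs{\xi}}\leq\e^{\Delta\xi}\e^{\abs{\xi_i}}$ and $(1+\abs{\xi})^\alpha\leq C(1+\abs{\xi_i})^\alpha$, at the cost of a factor tending to $1$. Combining this with Jensen for the averages $h_n,w_n$ and with the oscillation estimate $U_{i,n}^2\leq C\bigl(\frac1{\Delta\xi}\int_{\text{cell}}U^2+\Delta\xi\int_{\text{cell}}w^2\bigr)$ for the weighted $\Ltwo$ norm of $U_n$ yields $\norm{Y_n}_{F^e}\leq C\norm{Y}_{F^e}$ and likewise for $F^\alpha$, with $C$ independent of $Y$ and $n$ because $\e^{\Delta\xi}\leq e$ for $\Delta\xi\leq1$.
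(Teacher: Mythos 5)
Your proposal is correct and follows essentially the same route as the paper's proof: the identity $q+h=1$ combined with Jensen's inequality for the cell averages of $q$ and $w$ and the Cauchy--Schwarz inequality for the $q^2$-weighted average defining $U_n$ is exactly the paper's argument for \eqref{eq:idynhn}, and your treatment of \eqref{eq:convYn} (uniform boundedness plus density, oscillation estimates for $U_n$, $L^1$-convergence of $h_n$ for $H_n$) and of \eqref{eq:bdappsedec} (freezing the weight on each cell at the cost of a factor $\e^{\Delta\xi}$) matches the paper step for step. The only cosmetic differences are that you eliminate $h$ from the constraint at the outset rather than substituting $w^2=q(1-q)-q^2U^2$ midway, and you deduce $h\in L^1(\Real)$ from the monotonicity and boundedness of $H$ rather than from the identity $h=h^2+U^2q^2+w^2$; both variants are valid.
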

\begin{proof}
  Let us first prove \eqref{eq:idynhn}. Since
  $q+h=1$ (see \eqref{eq:stposq}), we obtain
  $q_n+h_n=1$ from the definitions of $v_n$
  (recall that $q_n=1+v_n$) and $h_n$. We consider
  a fix given interval $I=[\xi_i,\xi_{i+1}]$ and,
  for convenience, denote by an integral without
  boundary the weighted integral $\int
  f(\xi)\,d\xi=\frac1{\Delta\xi}\int_{\xi_i}^{\xi_{i+1}}f(\xi)\,d\xi$
  so that, for $\xi\in I$, $q_n=\int q\,d\xi$,
  $w_n=\int w\,d\xi$ and $h_n=\int h\,d\xi$. Using
  Jensen's inequality, we get that
  \begin{align}
    \notag
    q_n^2+U_n^2q_n^2+w_n^2&=
    \bigl(\int q\,d\xi\bigr)^2+U_n^2\bigl(\int q\,d\xi\bigr)^2
     +\bigl(\int w\,d\xi\bigr)^2\\
    \notag
    &\leq\int q^2\,d\xi+U_n^2\int q^2\,d\xi+\int
    w^2\,d\xi\\
    \notag
    &=\int q^2\,d\xi+U_n^2\int q^2\,d\xi+\int
    \bigl(q(1-q)-q^2U^2\bigr)\,d\xi\\
    \label{eq:ineqqnsq}
    &= q_n+U_n^2\int q^2\,d\xi
    -\int(q^2U^2)\,d\xi.
  \end{align}
  Using the Cauchy-Schwarz inequality and the definition 
of $U_n$, we obtain
  \begin{equation*}
    U_n^2\int q^2\,d\xi=\frac{(\int q^2U)^2\,d\xi}{\int
      q^2\,d\xi}\leq\frac{\int
      q^2\,d\xi\int q^2U^2\,d\xi}{\int
      q^2\,d\xi}=\int q^2U^2\,d\xi.
  \end{equation*}
  Hence, \eqref{eq:ineqqnsq} yields
  \begin{equation*}
    q_n^2+U_n^2q_n^2+w_n^2\leq q_n
  \end{equation*}
  which, as $q_n+h_n=1$, is equivalent to
  $q_nh_n\geq U_n^2q_n^2+w_n^2$. Let us now prove
  \eqref{eq:convYn}. A direct computation shows
  that
  \begin{equation}
    \label{eq:unifbf0}
    \norm{\lP_R\I_{\Delta\xi}(f)}_{L^2}\leq\norm{f}_{L^2},
  \end{equation}
  for any $f\in L^2(\R)$ and any $n$. Since
  $\lim_{n\to\infty}\norm{\lP_R\I_{\Delta\xi}(f)-f}_{L^2}=0$
  for any smooth function $f$ with compact
  support, we obtain, by density and
  \eqref{eq:unifbf0}, that the same result holds
  for any $f\in L^2(\R)$. Hence,
  \begin{equation*}
    \lim_{n\to\infty}\norm{q_n-q}_{L^2}=0,\quad\lim_{n\to\infty}\norm{w_n-w}_{L^2}=0\quad\text{ and }\quad\lim_{n\to\infty}\norm{h_n-h}_{L^2}=0.
  \end{equation*}
 On the interval $I=[\xi_i,\xi_{i+1}]$, we have
 \begin{equation*}
   \abs{U_n(\xi)-U(\xi)}=\abs{\frac{\int q^2(\eta)(U(\eta)-U(\xi))\,d\eta}{\int q^2\,d\eta}}\leq\frac{\Delta\xi}2
 \end{equation*}
 as $\abs{U_\xi}\leq \frac12$, see
 \eqref{eq:boundUxi}. Hence,
 $\norm{U_n-U}_{L^\infty(-R,R)}\leq\frac{\Delta\xi}2$ and
 \begin{align}
   \notag
   \norm{U_n-U}_{L^\infty}&\leq
   \norm{U_n-U}_{L^\infty(-R,R)}+\norm{U}_{L^\infty((-\infty,-R)\cup(R,\infty))}\\
   \label{eq:Linfdiff}
   &\leq\frac{\Delta\xi}2+\norm{U}_{L^\infty((-\infty,-R)\cup(R,\infty))}.
 \end{align}
 Since $U\in H^1(\Real)$,
 $\lim_{\xi\to\pm\infty}U_n=0$ and
 \eqref{eq:Linfdiff} yields
 $\lim_{n\to\infty}\norm{U_n-U}_{L^\infty}=0$. We
 have
 \begin{align}
   \notag
   \norm{U_n-\lP_R(U)}_{L^2}^2&=\sum_{i=-N}^{N-1}\int_{\xi_i}^{\xi_{i+1}}\left(\frac{\int_{\xi_i}^{\xi_{i+1}}q^2U\,d\eta}{\int_{\xi_i}^{\xi_{i+1}}q^2\,d\eta}-U(\xi)\right)^2\,d\xi\\
   \label{eq:estsuml2u}
   &\leq\sum_{i=-N}^{N-1}\frac{1}{\int_{\xi_i}^{\xi_{i+1}}q^2\,d\eta}\int_{\xi_i}^{\xi_{i+1}}\int_{\xi_i}^{\xi_{i+1}}q^2(\eta)(U(\xi)-U(\eta))^2\,d\xi
   d\eta,
 \end{align}
 after applying Cauchy-Schwarz. For $\xi,\eta\in
 I$, we have
 \begin{align*}
   \bigl(U(\xi)-U(\eta)\bigr)^2=
   \bigl(\int_{\eta}^{\xi}U_\xi(\bar\eta)\,d\bar\eta\bigr)^2\leq\Delta\xi\int_{\eta}^{\xi}U_\xi(\bar\eta)^2\,d\bar\eta\leq\Delta\xi\int_{\xi_i}^{\xi_{i+1}}U_\xi^2\,d\bar\eta.
 \end{align*}
 Hence, \eqref{eq:estsuml2u} yields
 \begin{equation*}
   \norm{U_n-\lP_R(U)}_{L^2}^2\leq\sum_{i=-N}^{N-1}\frac{(\Delta\xi)^2}{\int_{\xi_i}^{\xi_{i+1}}q^2\,d\eta}\int_{\xi_i}^{\xi_{i+1}}q^2\,d\eta\int_{\xi_i}^{\xi_{i+1}}U_\xi^2\,d\bar\eta\leq(\Delta\xi)^2\norm{U_\xi}_{L^2}^2.
 \end{equation*}
 It follows that
 \begin{align*}
   \norm{U_n-U}_{L^2}&\leq\norm{U_n-\lP_R(U)}_{L^2}+\norm{U-\lP_R(U)}_{L^2}\\
   &\leq
   \Delta\xi\norm{U_\xi}_{L^2}+\norm{U}_{L^2((-\infty,-R)\cup(R,\infty))}
 \end{align*}
 and therefore
 $\lim_{n\to\infty}\norm{U_n-U}_{L^2}=0$. The
 function $h$ belongs to $L^1(\R)$ because
 $h=h^2+U^2q^2+w^2$, by \eqref{eq:stposq}. A
 direct computation shows that
 \begin{equation}
   \label{eq:unifbf02}
   \norm{\lP_R\I_{\Delta\xi}(f)}_{L^1}\leq\norm{f}_{L^1},
 \end{equation}
 for any $f\in L^1(\R)$ and any $n$. Since
 $\lim_{n\to\infty}\norm{\lP_R\I_{\Delta\xi}(f)-f}_{L^1}=0$
 for any smooth function $f$ with compact support,
 we obtain, by density and \eqref{eq:unifbf02},
 that the same result holds for any $f\in
 L^1(\R)$. Hence,
 $\lim_{n\to\infty}\norm{h_n-h}_{L^1}=0$ and
 therefore
 \begin{equation*}
   \lim_{n\to\infty}\norm{H_n-H}_{L^\infty}=0.
 \end{equation*}
 Since $y_n=\xi-H_n$ and $y=\xi-H$, we get also
 that
 $\lim_{n\to\infty}\norm{y_n-y}_{L^\infty}=0$.
 Let us look at the bounds on the decay of $Y$. We
 assume $Y\in F^{e}$. We have
 \begin{align*}
   \int_\Real
   \e^{\abs{\xi}}\abs{h_n}\,d\xi&=\frac{1}{\Delta\xi}\sum_{i=-N}^{N+1}\int_{\xi_i}^{\xi_{i+1}}\int_{\xi_i}^{\xi_{i+1}}\e^{\abs{\xi}}\abs{h(\eta)}\,d\eta
   d\xi\\
   &=\frac{1}{\Delta\xi}\sum_{i=-N}^{N+1}\int_{\xi_i}^{\xi_{i+1}}\int_{\xi_i}^{\xi_{i+1}}\e^{\abs{\xi}}\e^{-\abs{\eta}}\e^{\abs{\eta}}\abs{h(\eta)}\,d\eta
   d\xi\\
   &\leq\frac{1}{\Delta\xi}\sum_{i=-N}^{N+1}\int_{\xi_i}^{\xi_{i+1}}\int_{\xi_i}^{\xi_{i+1}}\e^{\abs{\xi-\eta}}\e^{\abs{\eta}}\abs{h(\eta)}\,d\eta
   d\xi\\
   &\leq
   \e^{\Delta\xi}\sum_{i=-N}^{N+1}\int_{\xi_i}^{\xi_{i+1}}\e^{\abs{\eta}}\abs{h(\eta)}\,d\eta\leq
   3\norm{Y}_{F^e},
 \end{align*}
 after assuming, without loss of generality, that
 $\Delta\xi\leq1$.  Similarly one proves that
 $\int_\Real\e^{\abs{\xi}}w^2\,d\xi\leq
 C\norm{Y}_{F^e}$. It remains to estimate
 $\int_\Real U_n^2\e^{\abs{\xi}}\,d\xi$. For any
 $\eta,\xi\in[\xi_i,\xi_{i+1}]$, we have
 \begin{align*}
   U^2(\eta)&=U^2(\xi)+2\int_{\xi}^{\eta}UU_\xi(\bar\xi)\,d\bar\xi\\
   &\leq
   U^2(\xi)+\int_{\xi_i}^{\xi_{i+1}}(U^2+(U_\xi)^2)(\bar\xi)\,d\bar\xi=U^2(\xi)+\int_{\xi_i}^{\xi_{i+1}}(U^2+w^2)(\bar\xi)\,d\bar\xi.
 \end{align*}
 Hence,
 \begin{align*}
   U_{i,n}^2&=\left(\frac{\int_{\xi_i}^{\xi_{i+1}}q^2(\eta)U(\eta)\,d\eta}{\int_{\xi_i}^{\xi_{i+1}}q^2(\eta)\,d\eta}\right)^2\\
   &\leq\frac{\int_{\xi_i}^{\xi_{i+1}}q^2(\eta)U^2(\eta)\,d\eta}{\int_{\xi_i}^{\xi_{i+1}}q^2(\eta)\,d\eta}\quad\text{ (by Cauchy-Schwarz)}\\
   &\leq
   U^2(\xi)+\int_{\xi_i}^{\xi_{i+1}}(U^2+w^2)(\bar\xi)\,d\bar\xi
 \end{align*}
 for any $\xi\in[\xi_i,\xi_{i+1}]$. Then,
 \begin{align*}
   \int_{\Real}\e^{\abs{\xi}}U_n^2\,d\xi&=\sum_{i=-N}^{N-1}\int_{\xi_i}^{\xi_{i+1}}\e^{\abs{\xi}}U_{i,n}^2\,d\xi\\
   &\leq
   \sum_{i=-N}^{N-1}\int_{\xi_i}^{\xi_{i+1}}\big(\e^{\abs{\xi}}\big(U^2(\xi)+\int_{\xi_i}^{\xi_{i+1}}(U^2+w^2)(\bar\xi)\,d\bar\xi\big)\big)\,d\xi\\
   &\leq\int_\Real\e^{\abs{\xi}}U^2(\xi)\,d\xi+\sum_{i=-N}^{N-1}\int_{\xi_i}^{\xi_{i+1}}\int_{\xi_i}^{\xi_{i+1}}(U^2+w^2)(\bar\xi)\e^{\abs{\xi}}\,d\xi
   d\bar\xi\\
   &\leq
   \norm{Y}_{F^e}+\sum_{i=-N}^{N-1}\e^{\Delta\xi}\int_{\xi_i}^{\xi_{i+1}}\int_{\xi_i}^{\xi_{i+1}}(U^2+w^2)(\bar\xi)\e^{\abs{\bar
       \xi}}\,d\xi d\bar\xi\\
   &\leq(1+2\Delta\xi\e^{\Delta\xi})\norm{Y}_{F^e}\leq
   7\norm{Y}_{F^e}.
 \end{align*}
 Thus we have proved that $\norm{Y_n}_{F^e}\leq
 C\norm{Y}_{F^\alpha}$ for a constant $C$ which
 does not depend on $Y$ and $n$. One proves in the
 same way that $\norm{Y_n}_{F^\alpha}\leq
 C\norm{Y}_{F^\alpha}$.
\end{proof}

\subsection{Convergence of the Semi-Discrete
  solutions}
\label{subsec:convsdc}

Let $Y(t)$ and $Y_{\{\Delta\xi,R\}}(t)$ be
respectively the solution of \eqref{eq:sysban}
with initial data $Y_0$ and the solution of
\eqref{eq:sysbarr} with initial data
$Y_{0,\Delta\xi,R}$. We assume $Y_0\in F^e$. Given
$T>0$, we consider the fixed time interval
$[0,T]$. Since $Y_0\in\G$, the solution $Y(t)$
exists globally and
\begin{equation*}
  \sup_{t\in[0,T]}\norm{Y(t,\cdot)}_{F^e}\leq M
\end{equation*}
for a constant $M$ which depends only on $T$ and
$\norm{Y_0}_{F^e}$, see Theorems \ref{th:globsol}
and \ref{th:decay}. The solution
$Y_{\{\Delta\xi,R\}}$ does not necessarily exist
globally in time. However, we claim that there
exists $n>0$ such that for any $\Delta\xi$ and $R$
such that $\Delta\xi+\frac1R\leq\frac{1}{n}$, we
have
\begin{equation}
  \label{eq:boundapp}
  \sup_{t\in[0,T]}\norm{Y_{\{\Delta\xi,R\}}(t,\cdot)}<2M.
\end{equation}
It implies in particular that the solution
$Y_{\{\Delta\xi,R\}}$ is defined on $[0,T]$. Let
us assume the opposite. Then, there exists a
sequence $\Delta\xi_k$, $R_k$ and $t_k<T$ such
that $\lim_{k\to\infty}\Delta\xi_k=0$,
$\lim_{k\to\infty}R_k=\infty$,
\begin{equation*}
  \sup_{t\in[0,t_k]}\norm{Y_{\{\Delta\xi,R\}}(t,\cdot)}=2M.
\end{equation*}
From \eqref{eq:compares}, we get
\begin{equation}
  \label{eq:supY}
  \sup_{t\in[0,t_k]}\norm{Y(t,\cdot)-
    Y_{\{\Delta\xi_k,R_k\}}(t,\cdot)}_{F}\leq 
  C(M)\big(\norm{Y_0-Y_{0,\Delta\xi_k,R_k}}_{F}+\sqrt{\Delta\xi}+\e^{-R_k}\big).
\end{equation}
The constant $C(M)$ depends on $M$ but not on
$\Delta\xi_k$ and $R_k$.  Thus, we have
\begin{align*}
  2M=\sup_{t\in[0,t_k]}\norm{Y_{\{\Delta\xi_k,R_k\}}(t,\cdot)}&\leq\norm{Y(t_k,\cdot)}+\norm{Y(t_k,\cdot)-Y_{\{\Delta\xi_k,R_k\}}(t_k,\cdot)}\\
  &\leq
  M+C\big(\norm{Y_0-Y_{0,\Delta\xi,R}}_F+\sqrt{\Delta\xi_k}+\e^{-R_k}\big)
\end{align*}
which leads to a contradiction as the right-hand
side in the last inequality above tends to $M$
when $k$ tends to infinity. Once
\eqref{eq:boundapp} is established, Theorem
\ref{thm:convsemidisc} follows from
\eqref{eq:compares}. The same estimates can be
obtained for $Y_0\in F^{\alpha}$. Without loss of
generality, we assume that the approximating
sequence satisfies
$\norm{Y_0-Y_{0,\Delta\xi,R}}_F\leq\frac{C(M)}{2M}$
where $C(M)$ is given in \eqref{eq:supY}, so that 
$Y_{\{\Delta\xi,R\}}$ exists on $[0,T]$. Then,
we have the following theorem.
\begin{theorem}\label{thm:convsemidisc}
  Given $Y_0\in F^e$, for any $T>0$, there exists
  a constant $n>0$ such that, for all $\Delta\xi$
  and $R$ such that
  $\Delta\xi+\frac1R\leq\frac{1}n$, we have
  \begin{equation*}
    \sup_{t\in[0,T]}\norm{Y(t,\cdot)- Y_{\{\Delta\xi,R\}}(t,\cdot)}_{F}\leq 
C\big(\norm{Y_0- Y_{0,\Delta\xi,R}}_{F}+\sqrt{\Delta\xi}+\e^{-R}\big).
  \end{equation*}
  The constant $C$ depends only on
  $\norm{Y_0}_{F^e}$ and $T$. Correspondingly,
  given $Y_0\in F^\alpha$, we have
  \begin{equation*}
    \sup_{t\in[0,T]}\norm{Y(t,\cdot)- Y_{\{\Delta\xi,R\}}(t,\cdot)}_{F}\leq 
    C\big(\norm{Y_0- Y_{0,\Delta\xi,R}}_{F}+\sqrt{\Delta\xi}+\frac{1}{R^{\alpha/2}}\big)
  \end{equation*}
  and $C$ depends only on $\norm{Y_0}_{F^\alpha}$
  and $T$.
\end{theorem}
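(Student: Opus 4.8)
The plan is to obtain the estimate directly from the conditional comparison bound \eqref{eq:compares}, combined with the global control of the exact solution furnished by Theorems \ref{th:globsol} and \ref{th:decay}. The one genuine subtlety is that \eqref{eq:compares} is \emph{conditional}: it presupposes an a priori bound $\norm{Y_{\{\Delta\xi,R\}}(t)}_F\leq M$ on the whole interval $[0,T]$, whereas the discrete solution is so far only known to exist for a short time and could in principle leave any fixed ball. I would therefore first fix $M$ with $\sup_{t\in[0,T]}\norm{Y(t,\cdot)}_{F^e}\leq M$; this is legitimate because $Y_0\in\G$ makes $Y$ global (Theorem \ref{th:globsol}) and the $F^e$-norm stays controlled (Theorem \ref{th:decay}), and in particular $\sup_{t\in[0,T]}\norm{Y(t,\cdot)}_F\leq M$.

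The heart of the proof is a continuation (first-exit-time) bootstrap establishing the uniform bound \eqref{eq:boundapp}, namely $\sup_{t\in[0,T]}\norm{Y_{\{\Delta\xi,R\}}(t,\cdot)}_F<2M$ as soon as $\Delta\xi+\tfrac1R\leq\tfrac1n$ for $n$ large. I would argue by contradiction: if the bound failed along a sequence $\Delta\xi_k\to0$, $R_k\to\infty$, pick the first time $t_k<T$ at which the discrete norm reaches $2M$. On $[0,t_k]$ both solutions are bounded by $2M$, so \eqref{eq:compares}, with its constant now read off at the level $2M$, applies and yields the bound \eqref{eq:supY} on $\sup_{[0,t_k]}\norm{Y-Y_{\{\Delta\xi_k,R_k\}}}_F$. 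The triangle inequality then forces $2M=\norm{Y_{\{\Delta\xi_k,R_k\}}(t_k,\cdot)}_F\leq\norm{Y(t_k,\cdot)}_F+C(2M)\big(\norm{Y_0-Y_{0,\Delta\xi_k,R_k}}_F+\sqrt{\Delta\xi_k}+\e^{-R_k}\big)$. Since the approximating sequence of Theorem \ref{th:init} satisfies $\norm{Y_0-Y_{0,\Delta\xi_k,R_k}}_F\to0$, the right-hand side tends to $M$, contradicting $2M\leq M$; hence \eqref{eq:boundapp} holds for $n$ large.

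Once \eqref{eq:boundapp} is in force, $Y_{\{\Delta\xi,R\}}$ is defined on all of $[0,T]$ and bounded there by $2M$, so I apply \eqref{eq:compares} one final time on the full interval with $C=C(2M)$ to obtain exactly the claimed estimate in the $F^e$ case. The $F^\alpha$ case is structurally identical: I would replace $F^e$ by $F^\alpha$, use the global bound from Theorem \ref{th:decay}, swap the error term $\e^{-R}$ for $R^{-\alpha/2}$, and invoke \eqref{eq:compares2} in place of \eqref{eq:compares}. The main obstacle throughout is precisely the circularity of the conditional bound \eqref{eq:compares}, which I break by comparing the discrete solution against the already-global exact solution and choosing $n$ large enough that $C(2M)\big(\sqrt{\Delta\xi}+\e^{-R}+\norm{Y_0-Y_{0,\Delta\xi,R}}_F\big)$ stays strictly below $M$; this is what makes the first-exit contradiction strict and also justifies normalising the initial-data error as indicated before the statement.
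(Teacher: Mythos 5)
Your proposal is correct and follows essentially the same route as the paper: fix $M$ bounding the global exact solution via Theorems \ref{th:globsol} and \ref{th:decay}, run a first-exit-time contradiction argument using the conditional estimate \eqref{eq:compares} on $[0,t_k]$ to establish the uniform bound \eqref{eq:boundapp}, and then apply \eqref{eq:compares} (resp.\ \eqref{eq:compares2}) once more on all of $[0,T]$. The only cosmetic difference is that the paper normalises the initial-data error by a ``without loss of generality'' assumption rather than invoking convergence of the approximating sequence, but the argument is the same.
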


\section{Discretisation in time}
\label{sect-time}

In this section, we deal with the numerical
integration in time of the system of differential
equations \eqref{eq:sysbarr} which corresponds to
the semi-discretisation in space of system
\eqref{eq:sysban}. The flow of this system of 
differential equations has some geometric 
properties and it is of interest 
to derive numerical schemes that preserve 
these properties. Such 
integrators are called geometric 
numerical schemes, see for example the 
monograph \cite{hlw}. Thus we will look for
numerical schemes preserving the invariants
\eqref{eq:IDR} of our system of differential equations. 
Moreover, this last property will
enable us to show that the numerical schemes preserve the
positivity of the energy density. These invariants
are quartic functions of $Y$ and we are not aware
of schemes preserving quartic polynomials, this
is why we first split the system of equations
\eqref{eq:sysbarr} into two pieces. Each
sub-system will then have quadratic invariants and
we can use a numerical scheme preserving these
invariants. The following sub-systems read
\begin{align}\label{eq:sys1}
  \zeta_{i,t}&=0\nonumber\\
  U_{i,t}&=0\nonumber\\
  H_{i,t}&=0\nonumber\\
  q_{i,t}&=\gamma w_i\hspace{4cm} i=-N,\ldots,N-1\\
  w_{i,t}&=\sfrac{\gamma}{2}h_i+
  \bigl(\sfrac{3-2\gamma}{2}U_i^2-P_i\bigr)q_i\nonumber\\
  h_{i,t}&=\bigl(3U_i^2-2P_i\bigr)
  w_i,\nonumber
\end{align}
or shortly
\begin{equation*}
  \bar Y_{t}=\bar G_{1}(\bar Y),
\end{equation*}
where $\bar
Y(t)=\bigl(Y_{\{\Delta\xi,R\}}(t,\xi_i)\bigr)_{i=-N}^{N-1}$
and similarly for $\bar G_{1}$. 
We also define the system of differential
equations
\begin{align}\label{eq:sys2}
\zeta_{i,t}&=\gamma U_i\nonumber\\
U_{i,t}&=- Q_i\nonumber\\
H_{i,t}&= U_i^3-2 P_i U_i
\nonumber\\
q_{i,t}&=0\hspace{4cm} i=-N,\ldots,N-1\\
w_{i,t}&=0\nonumber\\
h_{i,t}&=-2 Q_i U_i q_i,\nonumber
\end{align}
or shortly
\begin{equation*}
  \bar Y_t= \bar G_2(\bar Y).
\end{equation*}
The space $F_{\{\Delta\xi,R\}}$ is finite
dimensional. We denote $\bar F=\Real^{2N\times 6}$.
The mapping from $\bar F$ to $F_{\{\Delta\xi,R\}}$
\begin{equation*}
  \left\{\bar Y_i=(\bar\zeta_i,\bar U_i,\bar H_i,\bar q_i,\bar w_i,\bar h_i)
\right\}_{i=-N}^{N-1}\mapsto Y=(\zeta,U,H,q,w,h)
\end{equation*}
is a bijection, where we define
\begin{equation*}
  \zeta(\xi)=\sum_{i=-N}^{N-1}\left(\bar\zeta_i\chi_{[\xi_i,\xi_{i+1})}(\xi)
  \right)+\bar\zeta_{-N}\chi_{(-\infty,-R]}(\xi)+\bar\zeta_{N}\chi_{[R,\infty]}(\xi)
\end{equation*}
and similar definitions for the other components
of $Y$. This mapping is in addition an isometry if
we consider the norm
\begin{multline}
  \label{eq:defnorm2N}
  \norm{\bar Y}_{\bar F}=\norm{\bar
    \zeta}_{l^\infty(\Real^{2N})}+\norm{\bar
    U}_{l^2(\Real^{2N})}+\norm{\bar
    U}_{l^\infty(\Real^{2N})}+\norm{\bar
    H}_{l^\infty(\Real^{2N})}\\+ \norm{\bar
    v}_{l^2(\Real^{2N})}+\norm{\bar
    w}_{l^2(\Real^{2N})}+\norm{\bar
    h}_{l^2(\Real^{2N})},
\end{multline}
where 
\begin{equation*}
  \norm{\bar z}_{l^2(\Real^{2N})}=(\Delta\xi\sum_{i=-N}^{N-1}
\bar z_i^2)^{\frac12}
\end{equation*}
for any $\bar z\in \Real^{2N}$. In the remaining,
we will always consider the norm given by
\eqref{eq:defnorm2N} for $\bar F$ so that the
bounds found in the previous sections directly
apply. In particular, we have the following lemma,
which is a consequence of Proposition
\ref{prop:LipG} and the same arguments that lead
to Lemmas \ref{lem:LipGbar} and
\ref{lem:LipGbarr}.
\begin{lemma}
  \label{lem:LipG1G2}
  The mappings $\bar G_1:\bar F\to \bar F$ and $\bar G_2:\bar
  F\to \bar F$ belong to $C^1(\bar F,\bar F)$ and
  \begin{equation*}
    \norm{\bar G_1(\bar Y)}_{\bar F}+\norm{\fracpar{\bar G_1}{\bar Y}(\bar Y)}_{L(\bar F,\bar F)}\leq C(M),
  \end{equation*}
  and
  \begin{equation*}
    \norm{\bar G_2(\bar Y)}_{\bar F}+\norm{\fracpar{\bar G_2}{\bar Y}(\bar Y)}_{L(\bar F,\bar F)}\leq C(M),
  \end{equation*}
  for any $\bar Y\in \bar B_M$, where
  \begin{equation*}
    \bar B_M=\{\bar Y\in \bar F\ |\ \norm{\bar Y}_{\bar F}\leq M\}.
  \end{equation*}
\end{lemma}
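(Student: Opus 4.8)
The plan is to reduce everything to the regularity of $G_{\{\Delta\xi,R\}}$ already established in Lemma \ref{lem:LipGbarr}. First I would record the elementary but crucial observation, made just before the statement, that the coordinate map $\bar Y\mapsto Y$ is a linear isometry from $(\bar F,\norm{\cdot}_{\bar F})$ onto $(F_{\{\Delta\xi,R\}},\norm{\cdot}_F)$. Reading off \eqref{eq:sys1} and \eqref{eq:sys2} componentwise, one checks directly that $\bar G_1$ and $\bar G_2$ are precisely the two halves into which the split system \eqref{eq:sysbarr} was divided, i.e.\ $G_{\{\Delta\xi,R\}}=\bar G_1+\bar G_2$ under this identification (each summand of $G_{\{\Delta\xi,R\}}$ is routed entirely into one of the two pieces, with no cancellation between them). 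Because the identification is a linear isometry, a map belongs to $C^1(\bar F,\bar F)$ with prescribed bounds on its value and on its Fréchet derivative if and only if the corresponding map on $F_{\{\Delta\xi,R\}}$ does, so it suffices to bound $\bar G_1$ and $\bar G_2$ as maps on $F_{\{\Delta\xi,R\}}$.

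Next I would observe that each of $\bar G_1$ and $\bar G_2$ is assembled from exactly the same ingredients used to analyse $G_{\{\Delta\xi,R\}}$ in Lemma \ref{lem:LipGbarr}: the components $\zeta,U,H,q,w,h$ (each a bounded coordinate projection, hence trivially $C^1$ with unit norm), the cut-off nonlinear terms $P_{\{\Delta\xi,R\}}$ and $Q_{\{\Delta\xi,R\}}$, and finite products of these. The proof of Lemma \ref{lem:LipGbarr} already shows, via Proposition \ref{prop:LipG} together with the boundedness of the linear operators $\lP$ and $\lP_R$ (so that $P_{\{\Delta\xi,R\}}=\lP_R[\lP[P]]$ and likewise for $Q$), that $P_{\{\Delta\xi,R\}}$ and $Q_{\{\Delta\xi,R\}}$ lie in $C^1$ with both their values and their Fréchet derivatives bounded by some $C(M)$ on $\bar B_M$. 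Applying the product rule of Lemma \ref{lem:Blip} to each individual summand appearing in \eqref{eq:sys1} and in \eqref{eq:sys2}, every such summand is seen to lie in $C^1(\bar F,\bar F)$ with value and derivative bounded by $C(M)$; summing the finitely many summands in each component then yields $\bar G_1,\bar G_2\in C^1(\bar F,\bar F)$ together with the two displayed inequalities.

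Since all the analytic work was already done in Proposition \ref{prop:LipG} and Lemmas \ref{lem:Blip} and \ref{lem:LipGbarr}, I do not expect any genuine obstacle: the content is essentially that splitting a $C^1$ map into summand maps built from the same bounded ingredients preserves $C^1$-ness and the bounds. The only points needing care are bookkeeping ones: that the isometry $\bar Y\mapsto Y$ transports both the value bound and the operator-norm bound on the derivative (immediate, since pre- and post-composing with an isometry leaves operator norms unchanged), and that the grid-point evaluations $P_i=P_{\{\Delta\xi,R\}}(Y)(\xi_i)$ and $Q_i=Q_{\{\Delta\xi,R\}}(Y)(\xi_i)$ inherit the $C^1$-dependence and the $C(M)$-bounds from $P_{\{\Delta\xi,R\}}$ and $Q_{\{\Delta\xi,R\}}$. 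The latter is automatic, because these cut-off quantities are already piecewise constant, so evaluation at $\xi_i$ merely selects a coefficient and is a bounded linear operation. Assembling these observations gives the lemma.
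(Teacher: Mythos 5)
Your argument is correct and is exactly the route the paper takes: the paper dispenses with a written proof by noting the lemma "is a consequence of Proposition~\ref{prop:LipG} and the same arguments that lead to Lemmas~\ref{lem:LipGbar} and~\ref{lem:LipGbarr}", which is precisely the reduction you carry out via the isometry $\bar F\cong F_{\{\Delta\xi,R\}}$, the splitting $G_{\{\Delta\xi,R\}}=\bar G_1+\bar G_2$, and the product rule of Lemma~\ref{lem:Blip}. Your additional remarks (that the isometry transports operator-norm bounds, and that evaluation at $\xi_i$ of the piecewise constant $P_{\{\Delta\xi,R\}}$, $Q_{\{\Delta\xi,R\}}$ is a bounded linear operation) only make explicit what the paper leaves implicit.
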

As this was done in the last sections, one can
show that both systems posses $\bar
I_i(Y)=U_i^2q_i^2+w_i^2-q_ih_i$, see
\eqref{eq:IDR}, as first integrals. That is $\bar
I_i^\prime(Y)\bar G_k(Y)=0$ for all $Y$, for
$k=1,2$ and for $i=-N,\ldots,N-1$. In particular,
this implies that every solutions of
\eqref{eq:sys1} or \eqref{eq:sys2} satisfy $\bar
I_i(\bar Y(t))= \bar I_i(\bar Y(0))$ for
$i=-N,\ldots,N-1$ and $t\geq0$. Having a closer
look at the differential equations \eqref{eq:sys1}
and \eqref{eq:sys2}, one sees that the invariants
are now quadratic functions ($\bar U$ is constant
for \eqref{eq:sys1} and $\bar q$ is constant for
\eqref{eq:sys2}) and we therefore use a numerical
scheme that preserves quadratic invariants.

\begin{proposition}\label{prop:RKinv}
Let us apply a Runge-Kutta scheme with coefficients 
satisfying 
\begin{equation}\label{eq:RKcoeff}
b_ia_{ij}+b_ja_{ji}=b_ib_j
\quad\:\:\:\text{for all}
\:\:\:\quad i,j=1,\ldots,s
\end{equation}
to the system \eqref{eq:sys1}, then it conserves
exactly the invariants $\bar I_i(Y)=U_i^2q_i^2+w_i^2-q_ih_i$ 
for $i=-N,\ldots,N-1$.  The same holds if we apply
the scheme to \eqref{eq:sys2}.
\end{proposition}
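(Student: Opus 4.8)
The plan is to reduce the statement to the classical theorem of Cooper (see \cite{hlw}), according to which a Runge--Kutta method whose coefficients satisfy \eqref{eq:RKcoeff} conserves exactly every invariant of the form $Q(\bar Y)=\bar Y^{\top}S\bar Y+r^{\top}\bar Y$ (with $S=S^{\top}$) of an autonomous system $\bar Y_t=f(\bar Y)$, i.e.\ every function that is at most quadratic in the unknowns and satisfies $Q'(\bar Y)f(\bar Y)=0$ for all $\bar Y$. The only obstruction to a direct application is that $\bar I_i(\bar Y)=U_i^2q_i^2+w_i^2-q_ih_i$ is \emph{quartic} on the full phase space. The key observation is that, after the splitting, this quartic becomes effectively quadratic on each subsystem: in \eqref{eq:sys1} the components $\zeta_i,U_i,H_i$ have vanishing right-hand side, so $U_i$ is frozen, while in \eqref{eq:sys2} the components $q_i,w_i$ are frozen.

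First I would establish the general lemma. Writing the method as $\bar Y^{(i)}=\bar Y_0+\Delta t\sum_j a_{ij}k_j$, $k_i=f(\bar Y^{(i)})$, $\bar Y_1=\bar Y_0+\Delta t\sum_i b_ik_i$, a direct expansion gives
\[
  Q(\bar Y_1)-Q(\bar Y_0)=\Delta t\sum_i b_i\,\bigl(2S\bar Y_0+r\bigr)^{\top}k_i
  +\Delta t^2\sum_{i,j}b_ib_j\,k_i^{\top}Sk_j.
\]
Substituting $\bar Y_0=\bar Y^{(i)}-\Delta t\sum_j a_{ij}k_j$ in the linear term and using that $(2S\bar Y^{(i)}+r)^{\top}k_i=Q'(\bar Y^{(i)})f(\bar Y^{(i)})=0$ leaves $-2\Delta t^2\sum_{i,j}b_ia_{ij}k_j^{\top}Sk_i+\Delta t^2\sum_{i,j}b_ib_j k_i^{\top}Sk_j$; symmetrising the first sum (using $S=S^{\top}$) and inserting \eqref{eq:RKcoeff} makes the two sums cancel, so $Q(\bar Y_1)=Q(\bar Y_0)$. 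The affine piece $r^{\top}\bar Y$ costs nothing: it is absorbed into the stage identity $Q'(\bar Y^{(i)})f(\bar Y^{(i)})=0$, which for us is exactly the already-verified relation $\bar I_i'(\bar Y)\bar G_k(\bar Y)=0$ (computed as in \eqref{eq:compIpres}).

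Then I would apply this to each subsystem. Since the right-hand side of the $U$-equation in \eqref{eq:sys1} (resp.\ of the $q$- and $w$-equations in \eqref{eq:sys2}) vanishes identically, the corresponding components of every stage derivative $k_i$ vanish, so the method keeps $U_i$ (resp.\ $q_i,w_i$) equal to its initial value through all internal stages and from one step to the next. Consequently, throughout the computation $\bar I_i$ may be read as the fixed quadratic form $Q_i=c_i^2q_i^2+w_i^2-q_ih_i$ in $(q_i,w_i,h_i)$ with the frozen scalar $c_i=U_i^{(0)}$ for \eqref{eq:sys1}, and as the affine-quadratic form $Q_i=a_i^2U_i^2-a_ih_i+b_i^2$ in $(U_i,h_i)$ with frozen $a_i=q_i^{(0)}$, $b_i=w_i^{(0)}$ for \eqref{eq:sys2}. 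Each $Q_i$ is a genuine at-most-quadratic invariant of the respective reduced flow, so the lemma yields $\bar I_i(\bar Y_1)=\bar I_i(\bar Y_0)$ for every $i$. The one delicate point, and the crux of the whole argument, is precisely this reduction: one must verify that the scheme preserves the frozen variables \emph{exactly}, not merely to the order of the method, so that the quartic may legitimately be treated as a quadratic form with constant coefficients; everything else is the routine symmetrisation computation above.
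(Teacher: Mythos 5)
Your proposal is correct and follows essentially the same route as the paper: the paper likewise writes $\bar I_i(Y)=Y^TD(Y)Y+d(Y)^TY$ with $D(Y)$ depending only on the component ($U$ for \eqref{eq:sys1}, $q$ for \eqref{eq:sys2}) whose stage derivatives vanish identically, so that $D$ is constant across all internal stages, and then runs the standard symmetrisation argument from \cite[Chapter IV, Theorem 2.2]{hlw} under condition \eqref{eq:RKcoeff}. The point you single out as the crux --- exact (not merely high-order) preservation of the frozen variables --- is precisely the step the paper records as $D(Y_0)=D(\tilde Y_i)$.
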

\begin{proof}
  The proof of this proposition is a simple
  adaptation of the proof of Theorem 2.2 from
  \cite[Chapter IV]{hlw}. Let us start with system
  \eqref{eq:sys1}.  Dropping the indexes and the
  bars for ease of notations, we first write the
  invariant $I(Y)$ as
  $$
  I(Y)=Y^TD(Y)Y+d(Y)^TY
  $$
  with $Y=(\zeta,U,H,q,w,h)$, 
  $D(Y)=
  \begin{pmatrix} 
    0 & 0 & 0 & 0 & 0 & 0\\
    0 & 0 & 0 & 0 & 0 & 0\\
    0 & 0 & 0 & 0 & 0 & 0\\
    0 & 0 & 0 & U^2 & 0 & -1/2\\
    0 & 0 & 0 & 0 & 1 & 0\\
    0 & 0 & 0 & -1/2 & 0 & 0
  \end{pmatrix}$ and $d(Y)=0^T$.
  For the Runge-Kutta method, we write
  $Y_1=Y_0+h\sum_{j=1}^sb_jK_j$ with
  $K_i=G_1(Y_0+h\sum_{j=1}^sa_{ij}K_j)$.  From the
  definition of the method, of the matrix $D(Y)$
  and of the vector $d(Y)$, it follows that
  \begin{align*}
    I(Y_1)&=Y_1^TD(Y_1)Y_1+d(Y_1)^TY_1=
    (Y_0+h\sum_{i=1}^sb_iK_i)^TD(Y_0)(Y_0+h\sum_{j=1}^sb_jK_j)\\
    &=Y_0^TD(Y_0)Y_0+h\sum_{i=1}^sb_iK_i^TD(Y_0)Y_0
    +h\sum_{j=1}^sb_jY_0^TD(Y_0)K_j\\
    &+h^2\sum_{i,j=1}^sb_ib_jK_i^TD(Y_0)K_j.
  \end{align*}
  Writing $K_i=G_1(\tilde Y_i)$ with 
  $\tilde Y_i=Y_0+h\sum_{j=1}^sa_{ij}K_j$,  
  we obtain that
  \begin{align*}
    I(Y_1)&=Y_0^TD(Y_0)Y_0+
    2h\sum_{i=1}^sb_i\tilde Y_i^TD(Y_0)G_1(\tilde Y_i)\\
    &+h^2\sum_{i,j=1}^s(b_ib_j-b_ia_{ij}-b_ja_{ji})K_i^TD(Y_0)K_j.
  \end{align*}
  The last term in the above equation vanishes due
  to condition \eqref{eq:RKcoeff}. By definition
  of the problem and of the matrix $D(Y)$, we have
  $D(Y_0)=D(\tilde Y_i)$ because $U$ is preserved
  and since $I(Y)$ is a first integral for
  \eqref{eq:sys1}, we get $\tilde Y_i^TD(\tilde
  Y_i)G_1(\tilde Y_i)=0$.  It thus follows
  $$
  I(Y_1)=Y_0^TD(Y_0)Y_0+0=I(Y_0)
  $$
  and the Runge-Kutta scheme applied to 
  \eqref{eq:sys1} conserves the invariant $I(Y)$.
  
  The proof for system \eqref{eq:sys2} is similar, take 
  $D(Y)=
  \begin{pmatrix} 
    0 & 0 & 0 & 0 & 0 & 0\\
    0 & q^2 & 0 & 0 & 0 & 0\\
    0 & 0 & 0 & 0 & 0 & 0\\
    0 & 0 & 0 & 0 & 0 & 0\\
    0 & 0 & 0 & 0 & 1 & 0\\
    0 & 0 & 0 & 0 & 0 & 0
  \end{pmatrix} 
  $ and $d(Y)=(0,0,0,0,0,-q)^T$.
\end{proof}
Let us consider the following differential equation 
$y_t(t)=f(y(t))$. The implicit midpoint rule
$$
y_1=y_0+\Delta t f(\sfrac{y_1+y_0}{2})
$$
satisfies the condition \eqref{eq:RKcoeff} and thus 
preserves quadratic invariants. The implicit midpoint 
rule will be the building block for the construction of 
the schemes we will use for the numerical 
experiments in Section~\ref{sect-numexp}. 
For other schemes preserving quadratic 
invariants, we refer to \cite{hlw} for example.

As a direct consequence of Proposition~\ref{prop:RKinv}, we 
have the following result. 
\begin{theorem}\label{th:RKinv}
Let us apply a Runge-Kutta scheme $\Phi^1_{\Delta t}$, 
resp. $\Phi^2_{\Delta t}$, with 
coefficients satisfying \eqref{eq:RKcoeff} 
to the system \eqref{eq:sys1}, resp. \eqref{eq:sys2}, 
with time step size $\Delta t$. 
Then the Lie-Trotter splitting 
\begin{equation*}
  \Phi_{\Delta t}:=\Phi^2_{\Delta t}\circ\Phi^1_{\Delta t}
\end{equation*}
has order of convergence one and preserves all the
invariants $\bar I_i$ for $i=-N,\ldots,N-1$. The
Strang splitting
\begin{equation*}
  \Phi_{\Delta t}:=\Phi^1_{\Delta t/2}\circ\Phi^2_{\Delta t}\circ\Phi^1_{\Delta t/2}
\end{equation*}
is symmetric, has thus order of convergence two and 
preserves all the invariants 
$\bar I_i$ for $i=-N,\ldots,N-1$.
\end{theorem}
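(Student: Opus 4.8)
The plan is to prove Theorem~\ref{th:RKinv} by combining three standard facts about splitting methods with the invariant-preservation result already established in Proposition~\ref{prop:RKinv}. First I would address the preservation of the invariants, since this is the central claim and follows most directly. By Proposition~\ref{prop:RKinv}, the Runge--Kutta flow $\Phi^1_{\Delta t}$ applied to \eqref{eq:sys1} preserves each $\bar I_i$ exactly, and likewise $\Phi^2_{\Delta t}$ applied to \eqref{eq:sys2} preserves each $\bar I_i$ exactly. Both the Lie--Trotter composition $\Phi^2_{\Delta t}\circ\Phi^1_{\Delta t}$ and the Strang composition $\Phi^1_{\Delta t/2}\circ\Phi^2_{\Delta t}\circ\Phi^1_{\Delta t/2}$ are built entirely out of these two maps. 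Since each constituent map leaves every $\bar I_i$ invariant, so does their composition: for the Lie--Trotter case, $\bar I_i(\Phi_{\Delta t}(\bar Y))=\bar I_i(\Phi^1_{\Delta t}(\bar Y))=\bar I_i(\bar Y)$, and analogously for the three-fold Strang composition. This settles the invariant-preservation assertion for both schemes.

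Next I would establish the orders of convergence. The key input is that each sub-flow $\Phi^k_{\Delta t}$ is itself a convergent one-step integrator: since $\bar G_1,\bar G_2\in C^1(\bar F,\bar F)$ with bounds on $B_M$ by Lemma~\ref{lem:LipG1G2}, each Runge--Kutta method of consistency order at least one is a consistent, stable approximation of the exact flows of \eqref{eq:sys1} and \eqref{eq:sys2}. For the Lie--Trotter splitting the order-one claim follows from the classical local error analysis of operator splitting: the local truncation error of $\Phi^2_{\Delta t}\circ\Phi^1_{\Delta t}$ against the exact flow of $\bar G=\bar G_1+\bar G_2$ is $\bigo{\Delta t^2}$, so the global error is $\bigo{\Delta t}$, giving order one. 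For the Strang splitting I would invoke symmetry: the map $\Phi^1_{\Delta t/2}\circ\Phi^2_{\Delta t}\circ\Phi^1_{\Delta t/2}$ is a symmetric (time-reversible) one-step method provided the building blocks $\Phi^k$ are themselves symmetric, which holds for the implicit midpoint rule and more generally for any symmetric Runge--Kutta scheme. A symmetric one-step method has only even powers of $\Delta t$ in its local error expansion, so consistency order one forces the leading error term to vanish and yields order two. These are exactly the statements and arguments in \cite[Chapter~II and Chapter~III]{hlw}, to which I would refer rather than reproduce the expansions.

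The main obstacle is ensuring that the order statements are applied on a domain where the required smoothness and boundedness hold uniformly. Because $\bar G_1$ and $\bar G_2$ are only asserted to be $C^1$ (Lemma~\ref{lem:LipG1G2}), the classical order results, which typically require higher regularity of the vector field for the Taylor/BCH-type expansions underlying splitting error analysis, must be read as applying to the restriction of the flow to a bounded set $\bar B_M$ on which the numerical solution and the exact solution remain. One should therefore note that, by the a~priori bounds of the preceding sections, the relevant solutions stay in some $\bar B_M$, and invoke the standard convergence theory there; strictly, the orders hold under the smoothness that the splitting-order theorems in \cite{hlw} demand, which is available here since $\bar G_1$ and $\bar G_2$ are in fact smooth (being compositions of the smooth maps $P,Q$ with polynomials). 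Thus I would briefly remark that $\bar G_1,\bar G_2$ are $C^\infty$ on bounded sets, legitimizing the appeal to the order theory, and then conclude.
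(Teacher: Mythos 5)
Your proposal is correct and takes essentially the same route as the paper, which states this theorem as a direct consequence of Proposition~\ref{prop:RKinv} (invariant preservation passes to compositions) combined with the classical order theory for Lie--Trotter and Strang splittings from \cite{hlw}; the paper's own rigorous version of the first-order estimate appears only later, in the proof of Theorem~\ref{th:full}. Your added caveats --- that the symmetry of the Strang composition requires the sub-integrators themselves to be symmetric, and that the order theory needs more regularity than $C^1$, which is available here since $\bar G_1,\bar G_2$ are smooth on the finite-dimensional space $\bar F$ --- are accurate refinements rather than deviations.
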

If we take for $\Phi^i_{\Delta t}$, $i=1,2$, the
implicit midpoint rule, we obtain a first order 
splitting scheme for \eqref{eq:sysbarr} that 
preserve exactly the invariants (a second order scheme 
is obtained using the Strang splitting). 
This will be the schemes that we will
consider in the numerical experiments of
Section~\ref{sect-numexp}.

\section{Full discretisation}\label{sect-full}

Our concern is now to combine the results from the
last two sections and to show that our numerical
schemes converge to the exact solution of the
system of equations \eqref{eq:sysban}. We
integrate $\bar Y(t)$ on the time interval $[0,T]$
and obtain $\bar Y_{j}$ for the time steps
$j\Delta t$, $j=0,\ldots,N_T$ where $\Delta
t=\frac{T}{N_T}$. We have the following
convergence result.
\begin{theorem}\label{th:full} Given initial
  values $Y_0$ in $F^e$ and $\bar Y_0\in F_R$, for
  the Lie-Trotter splitting we have
  \begin{equation}
    \label{eq:convfull}
    \max_{j\in\{0,\ldots,N_T\}}\norm{S_{j\Delta t}(Y_0)- \Phi_{j\Delta t}(\bar Y_0)}_F\leq
    C\bigl(
    \norm{Y_0-\bar Y_0}_{F}+\sqrt{\Delta\xi}+\e^{-R}+\Delta t\bigr),
  \end{equation}
  where we recall that $S_t$ stands for the
  semigroup of solutions to \eqref{eq:sysban} and,
  where the constant $C$ depends only on
  $\norm{Y_0}_{F^e}$, $\norm{\bar Y_0}_{F^e}$ and
  $T$. Correspondingly, given initial values $Y_0$
  in $F^\alpha$ and $\bar Y_0\in F_R$, we have
  \begin{equation}
    \label{eq:convfull2}
    \max_{j\in\{0,\ldots,N_T\}}\norm{S_{j\Delta t}(Y_0)- \Phi_{j\Delta t}(\bar Y_0)}_F\leq
    C\bigl(
    \norm{Y_0-\bar Y_0}_{F}+\sqrt{\Delta\xi}+\frac{1}{R^{\alpha/2}}+\Delta t\bigr),
  \end{equation}
  where the constant $C$ depends only on
  $\norm{Y_0}_{F^\alpha}$, $\norm{\bar
    Y_0}_{F^\alpha}$ and $T$. The same results
  hold for the Strang splitting with second order
  accuracy in time, that is, when we replace
  $\Delta t$ with $\Delta t^2$ in
  \eqref{eq:convfull}.
\end{theorem}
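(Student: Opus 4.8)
The plan is to insert the exact flow of the finite-dimensional system \eqref{eq:sysbarr} as an intermediate object and split the total error by the triangle inequality into a space part and a time part. Write $\varphi_t$ for the exact flow of \eqref{eq:sysbarr}, so that $\varphi_{j\Delta t}(\bar Y_0)$ is the exact semi-discrete solution with initial data $\bar Y_0$, while $\Phi_{j\Delta t}(\bar Y_0)$ is its numerical approximation by the splitting scheme. For each $j$,
\[
  \norm{S_{j\Delta t}(Y_0)-\Phi_{j\Delta t}(\bar Y_0)}_F
  \leq \norm{S_{j\Delta t}(Y_0)-\varphi_{j\Delta t}(\bar Y_0)}_F
  +\norm{\varphi_{j\Delta t}(\bar Y_0)-\Phi_{j\Delta t}(\bar Y_0)}_F.
\]
The first term is precisely the quantity bounded in Theorem \ref{thm:convsemidisc} (taking $Y_{0,\Delta\xi,R}=\bar Y_0$), which yields $C(\norm{Y_0-\bar Y_0}_F+\sqrt{\Delta\xi}+\e^{-R})$ in the $F^e$ case and the analogous bound with $R^{-\alpha/2}$ in the $F^\alpha$ case, with $C$ depending only on the relevant norms and on $T$. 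It therefore remains to bound the second, purely time-discrete term, which is the global error of the one-step method $\Phi_{\Delta t}$ applied to the ODE \eqref{eq:sysbarr}.

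For the time error I would argue by the classical convergence theorem for one-step methods. As in the discussion preceding Theorem \ref{thm:convsemidisc}, for $\Delta\xi+1/R$ small enough the exact semi-discrete trajectory satisfies $\sup_{t\in[0,T]}\norm{\varphi_t(\bar Y_0)}_F\leq 2M$, with $M$ depending only on $\norm{Y_0}_{F^e}$ (resp. $\norm{Y_0}_{F^\alpha}$) and $T$. On the ball $B_{3M}$, Lemma \ref{lem:LipG1G2} provides uniform $C^1$ bounds on $\bar G_1$ and $\bar G_2$; these make the implicit midpoint sub-steps $\Phi^1_{\Delta t}$ and $\Phi^2_{\Delta t}$ well defined (by a contraction argument) for $\Delta t$ below a threshold depending only on $M$, and render $\Phi_{\Delta t}=\Phi^2_{\Delta t}\circ\Phi^1_{\Delta t}$ Lipschitz with constant $1+C\Delta t$; this is the stability ingredient. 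For consistency I would invoke Theorem \ref{th:RKinv}: the Lie-Trotter composition has order one and the Strang composition order two, so that the local error obeys $\norm{\varphi_{\Delta t}(Y)-\Phi_{\Delta t}(Y)}_F\leq C\Delta t^{p+1}$ along the exact trajectory, with $p=1$, resp. $p=2$, and $C$ depending only on $M$. Feeding the local error and the Lipschitz stability into the standard discrete Gronwall (Lady Windermere's fan) estimate gives
\[
  \max_{j\in\{0,\ldots,N_T\}}\norm{\varphi_{j\Delta t}(\bar Y_0)-\Phi_{j\Delta t}(\bar Y_0)}_F\leq C\e^{CT}\Delta t^{p},
\]
with $C$ depending only on $M$ and $T$. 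Adding the two contributions gives \eqref{eq:convfull} for Lie-Trotter ($p=1$) and the corresponding second-order estimate for Strang ($p=2$); replacing $\e^{-R}$ by $R^{-\alpha/2}$ throughout handles the $F^\alpha$ case and proves \eqref{eq:convfull2}.

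The step I expect to be the main obstacle is the bookkeeping that keeps the numerical iterates inside the ball $B_{3M}$ where the estimates of Lemma \ref{lem:LipG1G2} are available: a priori one only knows that $\Phi_{j\Delta t}(\bar Y_0)$ is close to $\varphi_{j\Delta t}(\bar Y_0)$ once convergence has been established, so I would run a short continuation argument, mirroring the one used before Theorem \ref{thm:convsemidisc}, to show that for $\Delta t$ under a threshold depending only on $M$ and $T$ the iterates never escape $B_{3M}$; the Gronwall estimate then closes the bootstrap. Beyond this, the proof is the textbook convergence argument for one-step integrators, and no estimate other than Lemma \ref{lem:LipG1G2} and Theorem \ref{thm:convsemidisc} is needed.
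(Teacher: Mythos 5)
Your proposal is correct and follows essentially the same route as the paper: the same triangle-inequality split through the exact flow of \eqref{eq:sysbarr}, Theorem \ref{thm:convsemidisc} for the space part, and for the time part a consistency-plus-stability argument built on the uniform bounds of Lemma \ref{lem:LipG1G2}, closed by a discrete Gronwall estimate and a continuation argument keeping the iterates in a fixed ball. The only cosmetic difference is that you cite Theorem \ref{th:RKinv} for the order of the local error, whereas the paper re-derives the Taylor expansions of the midpoint substeps and of the splitting explicitly, precisely to make visible that the local-error constant depends only on $M$ and not on $\Delta\xi$ or $R$ --- a point you correctly identify as the crux anyway.
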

Let us denote $Y(t)=S_t(Y_0)$ and
\begin{equation*}
  \Phi_t(\bar Y_0)=\frac{((j+1)\Delta t-t)\Phi_{j\Delta t}(\bar Y_0)+(t-j\Delta t)\Phi_{(j+1)\Delta t}(\bar Y_0)}{\Delta t}
\end{equation*}
for $t\in[j\Delta t,(j+1)\Delta t]$. We can
rewrite \eqref{eq:convfull} as
\begin{equation*}
  \max_{t\in[0,T]}\norm{S_{t}(Y_0)- \Phi_{t}(\bar Y_0)}_F\leq
  C\bigl(
  \norm{Y_0-\bar Y_0}_{F}+\sqrt{\Delta\xi}+\e^{-R}+\Delta t\bigr).
\end{equation*}
\begin{proof}[Proof of Theorem \ref{th:full}]
To estimate the total error 
$$
\norm{S_{j\Delta t}(Y_0)- \Phi_{j\Delta t}(\bar Y_0)}_F   
$$
we split it in time and in space.  Let us start
with the error in time. The proof follows
basically the steps of the standard proof of the
convergence of numerical scheme for ordinary
differential equations. The crucial point is that
we guarantee here that the convergence rate in
time is independent of the discretisation step in
space. Let us first prove the following claim:
Given $M>0$, for any $\bar Y\in\bar B_M$ and $\bar
Z\in \bar B_M$, we have
  \begin{multline}
    \label{eq:claimconv}
    \Phi_{\Delta t}(\bar Y)-\phi_{\Delta t}(\bar
    Z)= \bar Y-\bar Z\\+\Delta t\left(\bar
      G_1(\bar Y)-\bar G_1(\bar Z)+\bar
      G_2(\bar Y)-\bar G_2(\bar
      Z)\right)+\bigo{\Delta t^2},
  \end{multline}
  where $\phi_{\Delta t}(\bar Z)$ stands for the 
exact flow of \eqref{eq:sysbarr} at time $\Delta t$ with starting 
values $\bar Z$. 
  Here, and in the following, the
  $\mathcal{O}$-notation stands for an element in
  $\bar F$ satisfying
  \begin{equation*}
    \norm{\bigo{\epsi}}_{\bar F}\leq C(M)\epsi
  \end{equation*}
  for all $\epsi>0$, where the constant $C(M)$
  depends on $M$ but is independent on $R$ and on
  the space grid size $\Delta\xi$. We first show
  that the midpoint rule
  \begin{equation*}
    \Phi^j_{\Delta t}(\bar Y)= \bar
    Y+\Delta t\bar G_j\left(\frac{\Phi^j_{\Delta
          t}(\bar Y)+\bar Y}{2}\right),
  \end{equation*}
  applied to equation \eqref{eq:sys1},
  resp. \eqref{eq:sys2}, is at least first order
  accurate. To do this, 
  let us introduce the mapping $K:\bar F\times\bar
  F\to \bar F$ given by
  \begin{equation*}
    K(\bar Z,\bar Y)=\bar Z-\bar Y-\Delta t\bar G_{1}
     \bigl(\frac{\bar Z+\bar Y}2\bigr).
  \end{equation*}
  We have $K(\Phi^1_{\Delta t}(\bar Y),\bar
  Y)=0$. Since
  \begin{equation*}
    \fracpar{K}{\bar Z}(\bar Y)=\id-\frac{\Delta t}{2}
     \fracpar{\bar G_1}{\bar Y}\bigl(\frac{\bar Z+\bar Y}2\bigr)
  \end{equation*}
  and $\norm{\fracpar{\bar G_1}{\bar Y}(\bar
    Y)}_{\bar F}\leq C(M)$ (by
  Lemma~\ref{lem:LipG1G2}), there exist $C(M)$
  such that, for $\Delta t\leq\frac{1}{C(M)}$, we
  have that $\fracpar{K}{\bar Z}(\bar Y)$ is
  invertible. By the implicit function Theorem, we
  get that $\Phi^1_{\Delta t}(\bar Y)$ is
  well-defined. Moreover, also following from the
  implicit function Theorem, we get that
  \begin{equation*}
    \norm{\Phi^1_{\Delta t}(\bar Y)}_{\bar F}\leq C(M).
  \end{equation*}
  Then,
  \begin{align*}
    \Phi^1_{\Delta t}(\bar Y)&=\bar Y+\Delta
    t\bar G_1\left(\bar Y+\frac{\Delta t}2\bar
      G_1\left(\frac{\Phi^1_{\Delta t}(\bar
          Y)+\bar Y}{2}\right)\right)\\
    &=\bar Y+\Delta t\bar G_1(\bar
    Y)+\bigo{\Delta t^2}
  \end{align*}
  by Lemma \ref{lem:LipG1G2}. Using Lemma
  \ref{lem:LipG1G2} again, we obtain for the exact
  flow of \eqref{eq:sys1} that
  \begin{equation*}
    \phi^1_{\Delta
      t}(\bar Z)=\bar
    Z+\Delta t\bar G_1(\bar Z)+\bigo{\Delta t^2}.
  \end{equation*}
  Following the same arguments, we obtain that
  \begin{equation*}
    \notag \Phi^2_{\Delta t}(\Phi^1_{\Delta
      t}(\bar Y))=\Phi^1_{\Delta t}(\bar
    Y)+\Delta t \bar G_2(\Phi^1_{\Delta t}(\bar
    Y))+\bigo{\Delta t^2}
  \end{equation*}
  and for the composition of the exact flows
  \begin{equation*}
    \phi^2_{\Delta
      t}(\phi^1_{\Delta t}(\bar Z))=\phi^1_{\Delta t}(\bar Z)+\Delta t\bar G_2(\phi^1_{\Delta t}(\bar Z))+\bigo{\Delta t^2}.
  \end{equation*}
  Hence,
  \begin{align}
    \notag \Phi^2_{\Delta t}(&\Phi^1_{\Delta
      t}(\bar Y))-\phi^2_{\Delta
      t}(\phi^1_{\Delta t}(\bar Z))\\
    \notag&=\Phi_{\Delta t}^1(\bar Y)+\Delta t\bar
    G_2(\Phi_{\Delta t}^1(\bar Y))-\phi_{\Delta
      t}^1(\bar Z)-\Delta t\bar G_2(\phi_{\Delta
      t}^1(\bar
    Z))+\bigo{\Delta t^2}\\
    \notag&=\bar Y-\bar Z+\Delta t(\bar G_1(\bar
    Y)-\bar G_1(\bar
    Z))\\
    \notag&\quad+\Delta t\left(\bar G_2(\bar Y+\Delta
      t\bar G_1(\bar Y )+\bigo{\Delta t^2})-\bar
      G_2(\bar Z+\Delta t\bar G_1(\bar Z
      )+\bigo{\Delta t^2})\right)\\
    \notag&\quad+\bigo{\Delta t^2}\\
    \label{eq:Phimphi}&=\bar Y-\bar Z+\Delta t\bigl(\bar G_1(\bar
    Y)-\bar G_1(\bar Z)+\bar G_2(\bar
    Y)-\bar G_2(\bar Z)\bigr)+\bigo{\Delta t^2}.
  \end{align}
  We consider now the splitting error. We have
  \begin{equation*}
    \phi_{\Delta t}(\bar Z)-\bar Z=
    {\Delta t} \bar G(\bar Z) +\bigo{\Delta t^2}
  \end{equation*}
  and
  \begin{equation*}
    \phi^1_{\Delta t}(\bar Z)-\bar Z=
    \Delta t\bar G_1(\bar Z)+\bigo{\Delta t^2}
  \end{equation*}
  and thus 
  \begin{align*}
    \phi^2_{\Delta t}(\phi^1_{\Delta t}(\bar
    Z))=\phi^1_{\Delta t}(\bar Z)+ \Delta t\bar
    G_2(\phi^1_{\Delta t}(\bar Z))+\bigo{\Delta
      t^2}.
  \end{align*}
  Hence,
  \begin{align}\notag
    \phi^2_{\Delta t}(\phi^1_{\Delta t}(\bar
    Z))-\phi_{\Delta t}(\bar Z)&=\Delta t\bar
    G(\bar Z)-\Delta t\bar G_1(\bar
    Z)\\\notag&\quad -\Delta t\bar G_2(\bar
    Z+\Delta t\bar G_1(\bar Z)+\bigo{\Delta
      t^2})+\bigo{\Delta
      t^2}\\
    \notag&=\Delta t(\bar G(\bar Z)-\bar
    G_1(\bar
    Z)-\bar G_2(\bar Z))+\bigo{\Delta t^2}\\
    \label{eq:splitest}&=\bigo{\Delta t^2},
  \end{align}
  as $\bar G=\bar G_1+\bar G_2$. Combining
  \eqref{eq:splitest} and \eqref{eq:Phimphi}, we
  obtain \eqref{eq:claimconv} and the claim is
  proved. Let us now set
  \begin{equation*}
    M=\sup_{t\in[0,T]}\norm{\phi_t(\bar Y_0)}_F.
  \end{equation*}
  For a given $\Delta t$, we define
  \begin{equation}
    \label{eq:jdeltat}
    j_{\Delta t}=\max\{j\in\{0,\ldots, N_T-1\}\ |\ 
    \norm{\Phi_{\bar j\Delta t}(\bar Y_0)}_{\bar F}
     \leq 2M\text{ for all }\bar j\leq j\}.
  \end{equation}
  For $j\leq j_{\Delta t}$, we get from \eqref{eq:claimconv} that 
  \begin{equation*}
    \norm{\Phi_{(j+1)\Delta t}(\bar Y_0)-
    \phi_{(j+1)\Delta t}(\bar Y_0)}_{F}\leq(1+C(M)\Delta t)
    \norm{\Phi_{(j)\Delta t}(\bar Y_0)-\phi_{(j)\Delta t}(\bar Y_0)}_{F}
    +\bigo{\Delta t^2}.
  \end{equation*}
  By induction, it follows that
  \begin{align*}
    \norm{\Phi_{(j+1)\Delta t}(\bar Y_0)-\phi_{(j+1)\Delta t}
    (\bar Y_0)}_{F}&\leq\norm{\bigo{\Delta t^2}}\sum_{k=0}^{j}(1+C(M)\Delta t)^k\\
    &\leq\norm{\bigo{\Delta t^2}}\frac{1}{C(M)\Delta t}
  \end{align*}
  and therefore
  \begin{equation}
    \label{eq:bdPhiphiglob}
    \Phi_{(j+1)\Delta t}(\bar Y_0)=\phi_{(j+1)\Delta t}(\bar Y_0)+\bigo{\Delta t}.
  \end{equation}
  We claim that there exists a constant $C(M)$
  such that for all $\Delta t\leq \frac1{C(M)}$,
  we have $j_{\Delta t}=N_T-1$ and therefore
  \eqref{eq:bdPhiphiglob} holds for all $j\leq
  N_T-1$. Let us assume the opposite. Then, there
  exists $\Delta t_k$ such that
  $\lim_{k\to\infty}\Delta t_k=0$ and $j_{\Delta
    t_k}<N_T-1$. By definition \eqref{eq:jdeltat},
  we have $\norm{\Phi_{(j_{\Delta t_k}+1)\Delta
      t_k}(\bar Y_0)}_{\bar F}>2M$. Then,
  \eqref{eq:bdPhiphiglob} implies
  \begin{align*}
    2M&\leq \norm{\Phi_{(j_{\Delta t_k}+1)\Delta t_k}(\bar Y_0)
    -\phi_{(j_{\Delta t_k}+1)\Delta t_k}(\bar Y_0)}_{F}+
    \norm{\phi_{(j_{\Delta t_k}+1)\Delta t_k}(\bar Y_0)}_{F}\\
    &\leq\bigo{\Delta t_k}+M
  \end{align*}
  which leads to a contradiction when $k$ tends to
  $\infty$. Finally, for the total error in space
  and time, we have:
  \begin{equation*}
    \norm{S_{j\Delta t}(Y_0)- \Phi_{j\Delta t}(\bar Y_0)}_F\leq \norm{S_{j\Delta t}(Y_0)- \phi_{j\Delta t}(\bar Y_0)}_F+\norm{\phi_{j\Delta t}(\bar Y_0)- \Phi_{j\Delta t}(\bar Y_0)}_F,
  \end{equation*}
  where all the functions are evaluated at time
  $j\Delta t$ for $j\leq N_T$.  The first term can
  be estimate using Theorem~\ref{thm:convsemidisc}
  and we thus obtain
  \begin{equation*}
    \max_{j\in\{0,\ldots,N_T\}}\norm{S_{j\Delta t}(Y_0)- \phi_{j\Delta t}(\bar Y_0)}_F\leq
    C\bigl(
    \norm{Y_0-\bar Y_0}_{F}+\sqrt{\Delta\xi}+\e^{-R}\bigr).
  \end{equation*}
  For the second one we use
  \eqref{eq:bdPhiphiglob} and this concludes the
  proof of the theorem for the Lie-Trotter
  splitting. If we had taken the Strang splitting
  instead, we would have obtained an error in time
  of order two since this scheme is symmetric. The
  proof for initial data in $F^{\alpha}$ is the 
  same.
\end{proof}

Our next task will be to show that our schemes
preserve the positivity of the particle density
and of the energy density as does the exact
solution of \eqref{eq:sysban} with initial data
given by Theorem~\ref{th:init}.  In order to prove
this result, we introduce $F^{\infty}$ defined as
\begin{equation*}
  F^\infty=\{Y=(y,U,H,q,w,h)\in F\ |\ \norm{q}_{L^\infty}+\norm{w}_{L^\infty}+\norm{h}_{L^\infty}<\infty\}
\end{equation*}
with the norm
\begin{equation*}
  \norm{Y}_{F^{\infty}}=\norm{Y}_{F}+\norm{q}_{L^\infty}+\norm{w}_{L^\infty}+\norm{h}_{L^\infty}.
\end{equation*}
We know that the space $F^{\infty}$ is preserved
by the governing equations \eqref{eq:sysban}, see
Lemma \ref{lem:presprop}. Using the semilinear
structure of \eqref{eq:sysban4}-\eqref{eq:sysban6}
with respect to $q$, $w$, $h$, one can show in the
same way that \eqref{eq:bdderG} was shown, that,
for a given $M>0$,
\begin{equation}
  \label{eq:LipbdFinfty}
  \norm{G(Y)}_{F^\infty}+\norm{\fracpar{G}{Y}(Y)}_{L(F^\infty,F^\infty)}\leq C(M)
\end{equation}
for any $Y\in B_M^\infty=\{Y\in F^\infty\ |\
\norm{Y}_{F^\infty}\leq M\}$. The same result
holds for the mappings $G_{\Delta\xi}$,
$G_{\Delta\xi,R}$, $\bar G_1$ and $\bar G_2$. In
particular we can prove, as in Theorem
\ref{th:full} for the proof of
\eqref{eq:claimconv}, that
\begin{multline*}
  \Phi_{\Delta t}(\bar Y)-\phi_{\Delta t}(\bar Z)=
  \bar Y-\bar Z\\+\Delta t\left(\bar G_1(\bar
    Y)-\bar G_1(\bar Z)+\bar G_2(\bar Y)-\bar
    G_2(\bar Z)\right)+\bigo{\Delta t^2},
\end{multline*}
where the definition of $\bigo{\cdot}$ is replaced
by
\begin{equation*}
  \norm{\bigo{\epsi}}_{\bar F^{\infty}}\leq C(M)\epsi.
\end{equation*}
Here, $\bar F^\infty= \bar F=\Real^{2N\times 6}$
but equipped with the norm derived from
$\norm{\cdot}_{F^\infty}$, see
\eqref{eq:defnorm2N}.

\begin{theorem}\label{th:fullinv} We consider an
  initial data which satisfy
  \begin{equation*}
    q_i^0h_i^0\geq(U_i^0q_i^0)^2+(w_i^0)^2,\quad q_i^0\geq 0,\quad h_i^0\geq 0\text{ and }q_i^0+h_i^0\geq c
  \end{equation*}
  for all $i=-N,\ldots, N-1$, for some constant
  $c>0$. Then, given $T>0$, there exists $n>0$,
  which depends only on $c$, $\norm{\bar
    Y^0}_{F^\infty}$ and $T$, such that if,
  $\Delta\xi+\frac{1}R+\Delta t<\frac 1{n}$,the
  positivity of the particle density $1/q$ and of
  the energy density $h$ are preserved by our numerical discretisation, that is,
  \begin{equation*}
    q_i^j\geq 0\quad\text{ and }\quad h_i^j\geq 0,
  \end{equation*}
  for $i=-N,\ldots, N-1$ and $j=1,\ldots,N_T$.
\end{theorem}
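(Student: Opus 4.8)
The plan is to lean on two facts that the scheme has been engineered to enjoy: it conserves the invariants $\bar I_i$ exactly, and the exact flow of \eqref{eq:sysbarr} keeps $q+h$ bounded away from zero. Since Theorem~\ref{th:RKinv} guarantees that $\Phi_{\Delta t}$ conserves each $\bar I_i(\bar Y)=U_i^2q_i^2+w_i^2-q_ih_i$, the hypothesis $q_i^0h_i^0\geq(U_i^0q_i^0)^2+(w_i^0)^2$ (i.e.\ $\bar I_i(\bar Y^0)\leq0$) propagates verbatim, so that
\begin{equation*}
  q_i^jh_i^j\geq(U_i^jq_i^j)^2+(w_i^j)^2\geq0
\end{equation*}
for every step $j$ and every $i$, where $\bar Y^j=\Phi_{j\Delta t}(\bar Y^0)$. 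The elementary observation driving the argument is that two reals with nonnegative product and positive sum are both nonnegative; hence, once $q_i^jh_i^j\geq0$ is in hand, it suffices to produce a strictly positive lower bound for $c_j:=\min_i(q_i^j+h_i^j)$ valid for all $j\leq N_T$.

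I would establish such a bound by induction on $j$, comparing one step of the scheme with one step of the exact flow $\phi_t$ of \eqref{eq:sysbarr}. Assume at step $j$ that $q_i^j\geq0$, $h_i^j\geq0$ and $q_i^j+h_i^j\geq c_j>0$ for all $i$; together with the conserved invariant this places $\bar Y^j$ in the cone of condition (iii) of Lemma~\ref{lem:presprop} (with the equality relaxed to $qh\geq U^2q^2+w^2$, which the preservation proof accommodates since $qh\geq w^2$ still yields $\abs{w}\leq\frac12(q+h)$). By the preservation theorem for \eqref{eq:sysbarr} (the finite-dimensional counterpart of Theorem~\ref{th:proppresbar}), the trajectory $\phi_t(\bar Y^j)$ stays in this cone on $[0,\Delta t]$, and rerunning the $1/(q+h)$ estimate from the proof of Lemma~\ref{lem:presprop}(iii) gives the quantitative rate $q_i^\star+h_i^\star\geq c_je^{-C\Delta t}$, where $q_i^\star,h_i^\star$ denote the components of $\phi_{\Delta t}(\bar Y^j)$ and $C=C(M)$. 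Now I invoke local consistency in the $F^\infty$ norm: as recorded just before the theorem, the $F^\infty$ form of \eqref{eq:claimconv} with $\bar Z=\bar Y^j$ reads $\Phi_{\Delta t}(\bar Y^j)=\phi_{\Delta t}(\bar Y^j)+\bigo{\Delta t^2}$ in $\bar F^\infty$, so that $\abs{q_i^{j+1}-q_i^\star}$ and $\abs{h_i^{j+1}-h_i^\star}$ are each $\leq C(M)\Delta t^2$ uniformly in $i$. Combining,
\begin{equation*}
  q_i^{j+1}+h_i^{j+1}\geq c_je^{-C\Delta t}-C(M)\Delta t^2=:c_{j+1},
\end{equation*}
while the conserved invariant gives $q_i^{j+1}h_i^{j+1}\geq0$; the elementary observation then yields $q_i^{j+1}\geq0$ and $h_i^{j+1}\geq0$, closing the induction as soon as $c_{j+1}>0$.

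It remains to check that the recursion $c_{j+1}\geq c_je^{-C\Delta t}-C(M)\Delta t^2$ keeps $c_j$ positive up to $j=N_T=T/\Delta t$. This is a discrete Gronwall estimate: iterating gives
\begin{equation*}
  c_j\geq e^{-Cj\Delta t}c-C(M)\Delta t^2\sum_{k=0}^{j-1}e^{-Ck\Delta t}\geq ce^{-CT}-C'(M)\Delta t,
\end{equation*}
where $C'(M)$ absorbs the geometric sum $\sum_{k}e^{-Ck\Delta t}\leq(1-e^{-C\Delta t})^{-1}\leq C''/\Delta t$ and the bound $e^{-Cj\Delta t}\geq e^{-CT}$. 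The right-hand side is at least $\frac12ce^{-CT}>0$ once $\Delta t$ is below a threshold depending only on $c$, $M=\norm{\bar Y^0}_{F^\infty}$ and $T$; this fixes the constant $n$ in the statement (together with $\Delta\xi+\frac1R+\Delta t<\frac1n$).

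The main obstacle, and the reason the whole argument must be run in $F^\infty$ rather than $F$, is that positivity is a \emph{pointwise} statement about the individual components $q_i,h_i$, so the local error must be controlled componentwise and uniformly in $i$; this is precisely why the paper records \eqref{eq:LipbdFinfty} and the $F^\infty$ version of \eqref{eq:claimconv}. A secondary technical point is that the constants $C(M)$ above all require the a priori bound $\norm{\bar Y^j}_{F^\infty}\leq 2M$ to hold uniformly along the iteration; I would secure this by the same continuation/bootstrap device used for \eqref{eq:boundapp} and \eqref{eq:jdeltat}, run together with the convergence estimate of Theorem~\ref{th:full}, so that the norm bound and the positivity bound are propagated in tandem. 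Finally, since the Strang splitting obeys the same $F^\infty$ consistency estimate with a $\Delta t^2$ local error, the identical recursion delivers the conclusion for it as well.
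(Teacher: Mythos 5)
Your proof is correct, and it differs from the paper's in how the key step is implemented, so a comparison is worthwhile. Both arguments share the same skeleton: exact conservation of $\bar I_i$ gives $q_i^jh_i^j\geq(U_i^jq_i^j)^2+(w_i^j)^2\geq0$ at every step; the observation that a nonnegative product plus a positive sum forces both factors to be nonnegative reduces everything to a positive lower bound on $q_i^j+h_i^j$; and that bound is propagated by a discrete Gronwall argument run in $F^\infty$, with the uniform bound $\norm{\Phi_{k\Delta t}(\bar Y_0)}_{F^\infty}\leq 2M$ secured by the same bootstrap as in Theorem~\ref{th:full}. Where you diverge is the one-step estimate: the paper never invokes the exact semi-discrete flow at this point; it writes the discrete increment $q_i^{k+1}-q_i^k+h_i^{k+1}-h_i^k$ directly from the scheme, bounds it via \eqref{eq:LipbdFinfty} and the invariant-derived inequality $\abs{w_i^k}\leq\tfrac1{\sqrt2}(q_i^k+h_i^k)$, and iterates a multiplicative recursion for the reciprocal $1/(q_i^k+h_i^k)$ (see \eqref{eq:iterstep1}). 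You instead compare $\Phi_{\Delta t}(\bar Y^j)$ with $\phi_{\Delta t}(\bar Y^j)$ through the $F^\infty$ form of \eqref{eq:claimconv}, import the continuous-time rate $q+h\geq \e^{-C\Delta t}(q_i^j+h_i^j)$ from the preservation result for \eqref{eq:sysbarr}, and run the recursion on the lower bound $c_j$ itself. Your route yields a cleaner step estimate but needs an extra ingredient, which you correctly supply: condition (iii) of Lemma~\ref{lem:presprop} survives with the equality $qh=U^2q^2+w^2$ relaxed to an inequality, since its proof only uses $qh\geq w^2$ to get $\abs{w}\leq\tfrac12(q+h)$, and the inequality form is itself propagated because $I$ is conserved along the exact flow. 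The paper's version is in effect the direct discretisation of that same $1/(q+h)$ computation and is more self-contained at the discrete level. One minor ordering point: the induction step needs $c_j>0$ to invoke the preservation lemma, so the closed-form verification $c_j\geq\tfrac12c\e^{-CT}>0$ for all $j\leq N_T$ should be established for the recursively defined sequence before the induction is closed; you do this, just stated after the fact.
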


\begin{proof} The main idea of the proof is to
  control the growth of $1/(q^k_i+h^k_i)$. To do
  so we adapt the proof of
  Lemma~\ref{lem:presprop} to this discrete
  situation.  Let $\displaystyle
  M=2\sup_{t\in[0,T]}\norm{\phi_t(\bar
    Y_0)}_{F^\infty}$.  As in the proof of Theorem
  \ref{th:full}, we can prove that for $\Delta t$
  small enough (the bound depending only on $M$),
  we have
  \begin{equation*}
    \norm{\Phi_{k\Delta t}(\bar Y_0)}_{F^\infty}\leq 2M
  \end{equation*}
  for all $k=0,\ldots,N_T$.  For $k<N_T$, we have, by definition of our 
  scheme, that 
  \begin{align*}
    \frac{1}{q_i^{k+1}+h_i^{k+1}}-\frac{1}{q_i^{k}+h_i^{k}}&=-\frac{q_i^{k+1}-q_i^k+h_i^{k+1}-h_i^k}{(q_i^{k+1}+h_i^{k+1})(q_i^{k}+h_i^{k})}\\
    &=-\frac{\Delta t\bigl(\gamma
      w_i^k-2Q(Y^k)U_i^kq_i^k+(3(U_i^k)^2-2P(Y^k))w_i^k\bigr)+\bigo{\Delta
        t^2}}{(q_i^{k+1}+h_i^{k+1})(q_i^{k}+h_i^{k})}.
  \end{align*}
  Hence, using the bounds \eqref{eq:LipbdFinfty}, we get 
  \begin{equation}
    \label{eq:iterstep1}
    \abs{\frac{1}{q_i^{k+1}+h_i^{k+1}}-\frac{1}{q_i^{k}+h_i^{k}}}\leq \frac{\Delta t C(M)}{\abs{q_i^{k+1}+h_i^{k+1}}}\left(\frac{\abs{w_i^k}+\abs{q_i^k}+\Delta t}{\abs{q_i^{k}+h_i^{k}}}\right).
  \end{equation}
  Let us prove by induction that, for $\Delta t$
  small enough (depending only $M$),
  \begin{equation}
    \label{eq:indstatement}
    \frac1{q_i^k+h_i^k}\leq \frac1c\e^{2C(M)T}+1,\quad 
    q_i^k\geq 0\text{ and }h_i^k\geq 0
  \end{equation}
  for $i=-N,\ldots, N-1$, all $k=0,\ldots,N_T$ and
  where $C(M)$ is the constant given in
  \eqref{eq:iterstep1}. By definition of our
  initial data, these assumptions hold for $k=0$.
  We assume now that \eqref{eq:indstatement} holds
  for $k=0,\ldots,j$ and we want to prove that it
  also holds for $j+1$. We set $\bar
  M=\frac1c\e^{2C(M)T}+1$. Since the numerical
  schemes preserve the invariant
  $q_i^kh_i^k=(U_i^kq_i^k)^2+(w_i^k)^2$, we obtain
  in particular that
  \begin{equation}
    \label{eq:posforallk}
    q_i^kh_i^k\geq (U_i^kq_i^k)^2+(w_i^k)^2
  \end{equation}
  for all $k=0,\ldots,N_T$. From this, it follows that 
  $\abs{w_i^k}\leq\frac1{\sqrt{2}} (q_i^k+h_i^k)$
  as $q_i^k\geq 0$ and $h_i^k\geq0$. For $k\leq
  j$, we get from \eqref{eq:iterstep1} and our induction hypothesis that
  \begin{equation}
    \label{eq:iterstep2}
    \abs{\frac{1}{q_i^{k+1}+h_i^{k+1}}-\frac{1}{q_i^{k}+h_i^{k}}}\leq 
\frac{\Delta t C(M)}{\abs{q_i^{k+1}+h_i^{k+1}}}\left(1+\frac1{\sqrt{2}}+\bar M\Delta t\right).
  \end{equation}
  From the above equation, we get
  \begin{equation*}
    \abs{\frac{1}{q_i^{k+1}+h_i^{k+1}}}\leq\frac{1}{1-2C(M)\Delta t-\bar MC(M)\Delta t^2}\abs{\frac{1}{q_i^{k}+h_i^{k}}}
  \end{equation*}
  and therefore
  \begin{align*}
    \label{eq:estuntijp1}
    \abs{\frac{1}{q_i^{j+1}+h_i^{j+1}}}&\leq
    \frac{1}{(1-2C(M)\Delta t-\bar MC(M)\Delta
      t^2)^{j}}
    \abs{\frac{1}{q_i^{0}+h_i^{0}}}\\
    &\leq\frac{1}{c(1-2C(M)\Delta t-\bar
      MC(M)\Delta t^2)^{\frac{T}{\Delta t}}}.
  \end{align*}
  We have
  \begin{equation*}
    \lim_{\Delta t\to0}\frac{1}{c(1-2C(M)\Delta t-
      \bar MC(M)\Delta t^2)^{\frac{T}{\Delta t}}}=\frac{1}{c}\e^{2C(M)T}<\bar M.
  \end{equation*}
  Therefore, by taking $\Delta t$ small enough,
  depending only on the value of $M$ and not on
  the number of induction steps $j$, we get
  \begin{equation*}
    \abs{\frac{1}{q_i^{j+1}+h_i^{j+1}}}\leq\bar M.
  \end{equation*}
  Using the above inequality and \eqref{eq:iterstep2}, we obtain 
  $$
  -\frac{1}{q_i^{j+1}+h_i^{j+1}}+\frac{1}{q_i^{j}+h_i^{j}}\leq 
  \bar M\Delta t C(M)\left(1+\frac1{\sqrt{2}}+\bar M\Delta t\right)  
  $$
  so that $\frac{1}{q_i^{j+1}+h_i^{j+1}}\geq 0$ for a sufficiently 
  small $\Delta t$. By
  \eqref{eq:posforallk}, we have that
  $q_i^{j+1}h_i^{j+1}\geq 0$ and therefore 
  \begin{equation*}
    q_i^{j+1}\geq 0 \text{ and }h_i^{j+1}\geq 0,
  \end{equation*}
  which concludes our proof by induction.
\end{proof}

Now we go back to the original set of
coordinates. Given an initial data $u_0\in
H^{1,e}(\Real)$ or $H^{1,\alpha}(\Real)$, we
construct the initial data $Y_0$ as given by
\eqref{eq:initdatex}. Then the function $u(t,x)$
defined as
\begin{equation}
  \label{eq:defuex}
  u(t,x)=U(t,\xi)\quad\text{ for }\quad y(t,\xi)=x
\end{equation}
is well-defined, is a weak solution to
\eqref{eq:hr2} which corresponds to the global
conservative solution.  The definition
\eqref{eq:defuex} of $u(t,x)$ means that for any
given time $t$ the set of points
\begin{equation*}
  (y(t,\xi),U(t,\xi))\in\Real^2\text{ for }\xi\in\Real
\end{equation*}
is the graph of $u(t,x)$. Let
$\frac1{n}=\Delta\xi+\frac1{R}+\Delta t$ so that
$n$ tends to infinity if and only if $\Delta\xi$,
$\Delta t$ tend to zero and $R$ tends to
infinity. We consider an approximating sequence
$Y_{0,n}$ which satisfies the conditions
\eqref{eq:convYn} and \eqref{eq:bdappsedec} of the
sequence of initial values which is constructed in
Section \ref{sect-appdata}. Let
$Y_n(t)=\Phi(Y_{0,n})$. From Theorem
\ref{th:full}, we obtain the following convergence
theorem.
\begin{theorem} The full discretised scheme
  provide us with points which converge to the
  graph of the exact conservative solution
  $u(t,x)$. Indeed, if $u_0\in H^{1,e}(\R)$, we have
  \begin{multline*}
    \max_{\substack{i=-N,\ldots,N-1\\j=0,\ldots,N_T}}
        \abs{(y_n(t_j,\xi_i),U_n(t_j,\xi_i))
       -(y(t_j,\xi_i),U(t_j,\xi_i))}\\\leq
    C\bigl(\norm{Y_0-\bar
      Y_0}_{F}+\sqrt{\Delta\xi}+\e^{-R}+\Delta
    t\bigr),
  \end{multline*}
  where the constant $C$ depends only on
  $\norm{u_0}_{H^{1,e}}$ and, if $u_0\in
  H^{1,\alpha}(\R)$,
  \begin{multline}
    \label{eq:errorest}
    \max_{\substack{i=-N,\ldots,N-1\\j=0,\ldots,N_T}}
    \abs{(y_n(t_j,\xi_i),U_n(t_j,\xi_i))
      -(y(t_j,\xi_i),U(t_j,\xi_i))}\\\leq
    C\bigl(\norm{Y_0-\bar
      Y_0}_{F}+\sqrt{\Delta\xi}+\frac{1}{R^{\alpha/2}}+\Delta
    t\bigr),
  \end{multline}
  where the constant $C$ depends only on
  $\norm{u_0}_{H^{1,\alpha}}$.
\end{theorem}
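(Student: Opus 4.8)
The plan is to recognize the statement as an immediate corollary of Theorem~\ref{th:full}: the graph points $(y,U)$ evaluated at the grid nodes are already controlled by the $F$-norm estimate proved there, so essentially no new analysis is needed. The key observation I would exploit is that both relevant coordinates carry their $L^\infty$ norm inside $\norm{\cdot}_F$. Indeed, $\norm{Y}_F$ contains the summands $\norm{\zeta}_{L^\infty}$ and $\norm{U}_{L^\infty}$, and since $y=\zeta+\xi$ the linear term cancels in the difference, giving $y_n-y=\zeta_n-\zeta$. Moreover, both Theorem~\ref{th:full} and the present statement record estimates at the same discrete time levels $t_j=j\Delta t$, so no temporal interpolation enters.

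First I would record, for the numerical solution $Y_n(t)=\Phi_t(Y_{0,n})$ and the exact solution $Y(t)=S_t(Y_0)$, the elementary pointwise bound
\begin{align*}
  \abs{(y_n(t_j,\xi_i),U_n(t_j,\xi_i))-(y(t_j,\xi_i),U(t_j,\xi_i))}
  &\leq\abs{\zeta_n(t_j,\xi_i)-\zeta(t_j,\xi_i)}+\abs{U_n(t_j,\xi_i)-U(t_j,\xi_i)}\\
  &\leq\norm{\zeta_n(t_j,\cdot)-\zeta(t_j,\cdot)}_{L^\infty}+\norm{U_n(t_j,\cdot)-U(t_j,\cdot)}_{L^\infty}\\
  &\leq2\norm{Y_n(t_j,\cdot)-Y(t_j,\cdot)}_F,
\end{align*}
valid at each node $\xi_i$ and each time level $t_j=j\Delta t$. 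Taking the maximum over $i\in\{-N,\dots,N-1\}$ and $j\in\{0,\dots,N_T\}$ and then inserting Theorem~\ref{th:full} immediately produces the two asserted estimates, with $\e^{-R}$ in the exponential-decay case and $R^{-\alpha/2}$ in the polynomial-decay case.

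The only point requiring care is the dependence of the constant. Theorem~\ref{th:full} furnishes a constant depending on both $\norm{Y_0}_{F^e}$ and $\norm{\bar Y_0}_{F^e}$ (respectively on the $F^\alpha$ norms). Here $\bar Y_0=Y_{0,n}$ is the approximating datum of Section~\ref{sect-appdata}, and the bound \eqref{eq:bdappsedec} of Theorem~\ref{th:init} gives $\norm{Y_{0,n}}_{F^e}\leq C\norm{Y_0}_{F^e}$, so $\norm{\bar Y_0}_{F^e}$ is controlled by $\norm{Y_0}_{F^e}$ alone. Finally, the equivalence between $u_0\in H^{1,e}$ and $Y_0\in F^e$ (and between $H^{1,\alpha}$ and $F^\alpha$) established in Section~\ref{sect-appdata} bounds $\norm{Y_0}_{F^e}$ in terms of $\norm{u_0}_{H^{1,e}}$, so the constant depends only on $\norm{u_0}_{H^{1,e}}$ (resp.\ $\norm{u_0}_{H^{1,\alpha}}$), as claimed. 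There is no genuine obstacle here: the substance lives entirely in Theorem~\ref{th:full}, and this final step is pure bookkeeping of the norm equivalences together with the approximation estimate \eqref{eq:bdappsedec}.
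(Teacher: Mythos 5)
Your proposal is correct and matches the paper's (implicit) argument: the paper gives no separate proof, stating only that the result follows from Theorem~\ref{th:full}, and your reduction via the $L^\infty$ summands of $\norm{\cdot}_F$ for $\zeta$ and $U$, together with the bookkeeping of the constant through \eqref{eq:bdappsedec} and the $H^{1,e}$/$F^e$ equivalence, is exactly the intended route.
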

Since
\begin{align*}
  \abs{y(t,\xi_{i+1})-y(t,\xi_i)}=
  \abs{\int_{\xi_i}^{\xi_{i+1}}q(t,\xi)\,d\xi}\leq
  C\Delta\xi,
\end{align*}
where $C$ depends only on $\norm{Y_0}_{F^\infty}$,
we have an apriori upper bound on the density of
points of the graph of $u$ we can approximate by
our scheme.

In the case where $u_0$ does not belong to
$H^{1,\alpha}(\R)$, we can approximate $u_0$ by
functions $u_{0,k}\in H^{1,\alpha}(\R)$, which
converge to $u_0$ in $H^1(\R)$. From \cite{horay07},
we know that the change of variable
\eqref{eq:initdatex} produces sequences $Y_{0,k}$
and $Y_0$ such that
$\lim_{k\to0}\norm{Y_{0,k}-Y_0}_F=0$. In this way,
by using the results done for functions in
$F^{\alpha}$, we can approximate the exact
solution $Y(t)$ and prove convergence. However,
since $\norm{Y_{0,k}}_{F^{\alpha}}$ is not
uniformly bounded with respect to $k$, we lose the
control on the error rate (the term
$\frac{1}{R^{\alpha/2}}$) which is given by
\eqref{eq:errorest}.

\section{Numerical experiments}\label{sect-numexp}

In this section, we present 
some numerical experiments for the
hyperelastic rod wave equation \eqref{eq:hr}. 
In order to demonstrate the efficiency of our 
schemes, we will numerically compute three 
types of traveling waves with decay, 
see Figure~\ref{fig:init}.
\begin{figure}[h]
\begin{center}
\includegraphics*[height=3cm,keepaspectratio]
{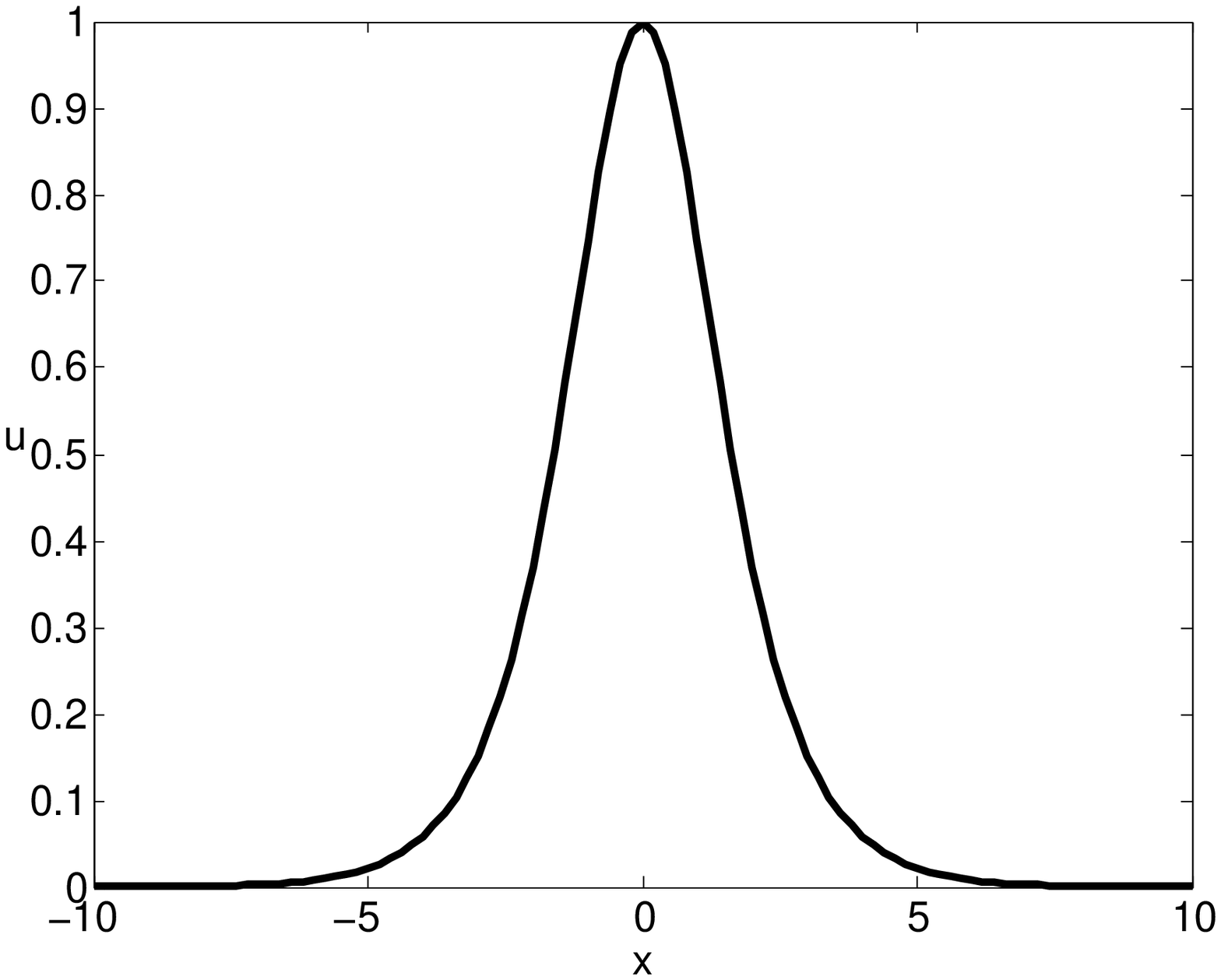}
\includegraphics*[height=3cm,keepaspectratio]
{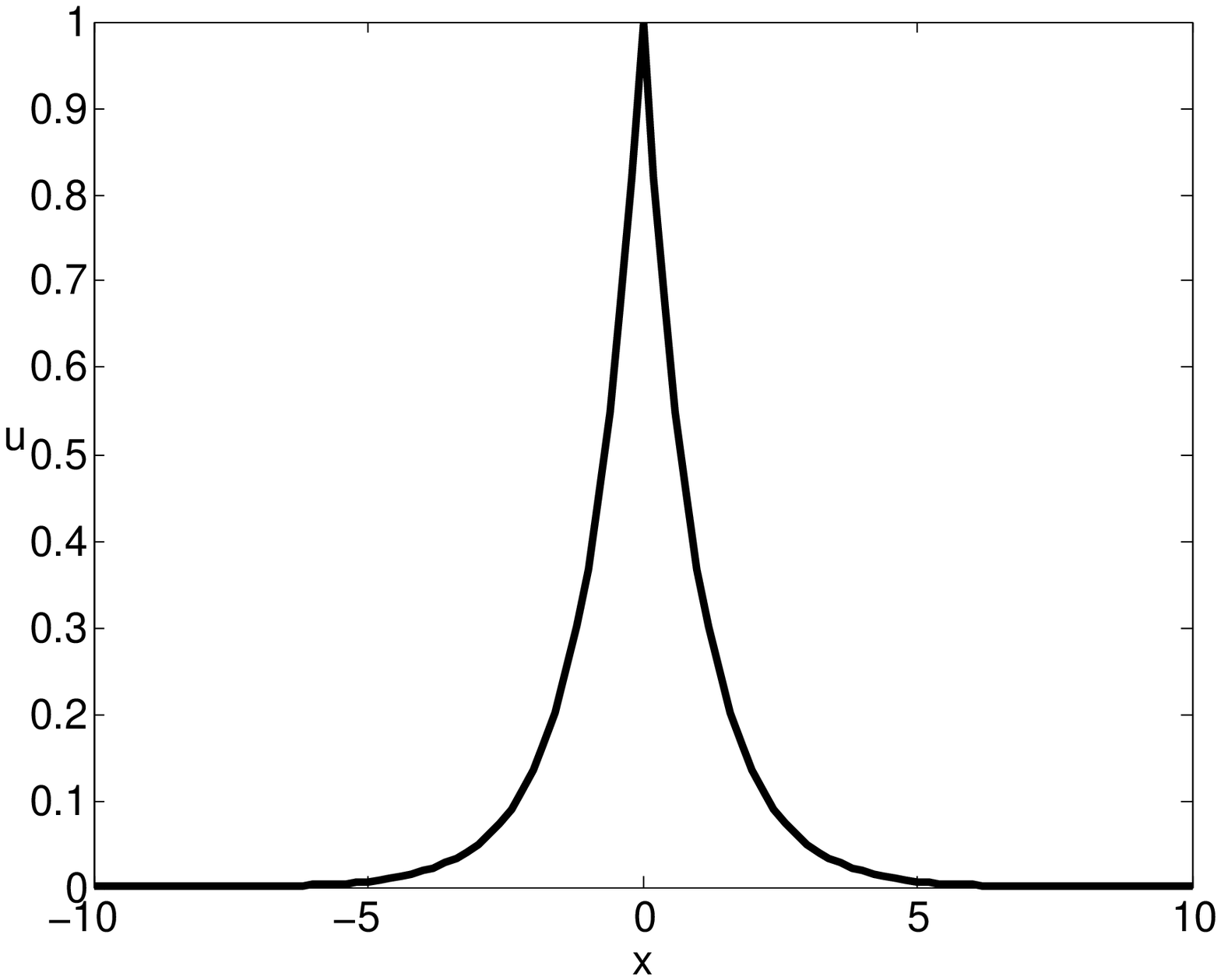}
\includegraphics*[height=3cm,keepaspectratio]
{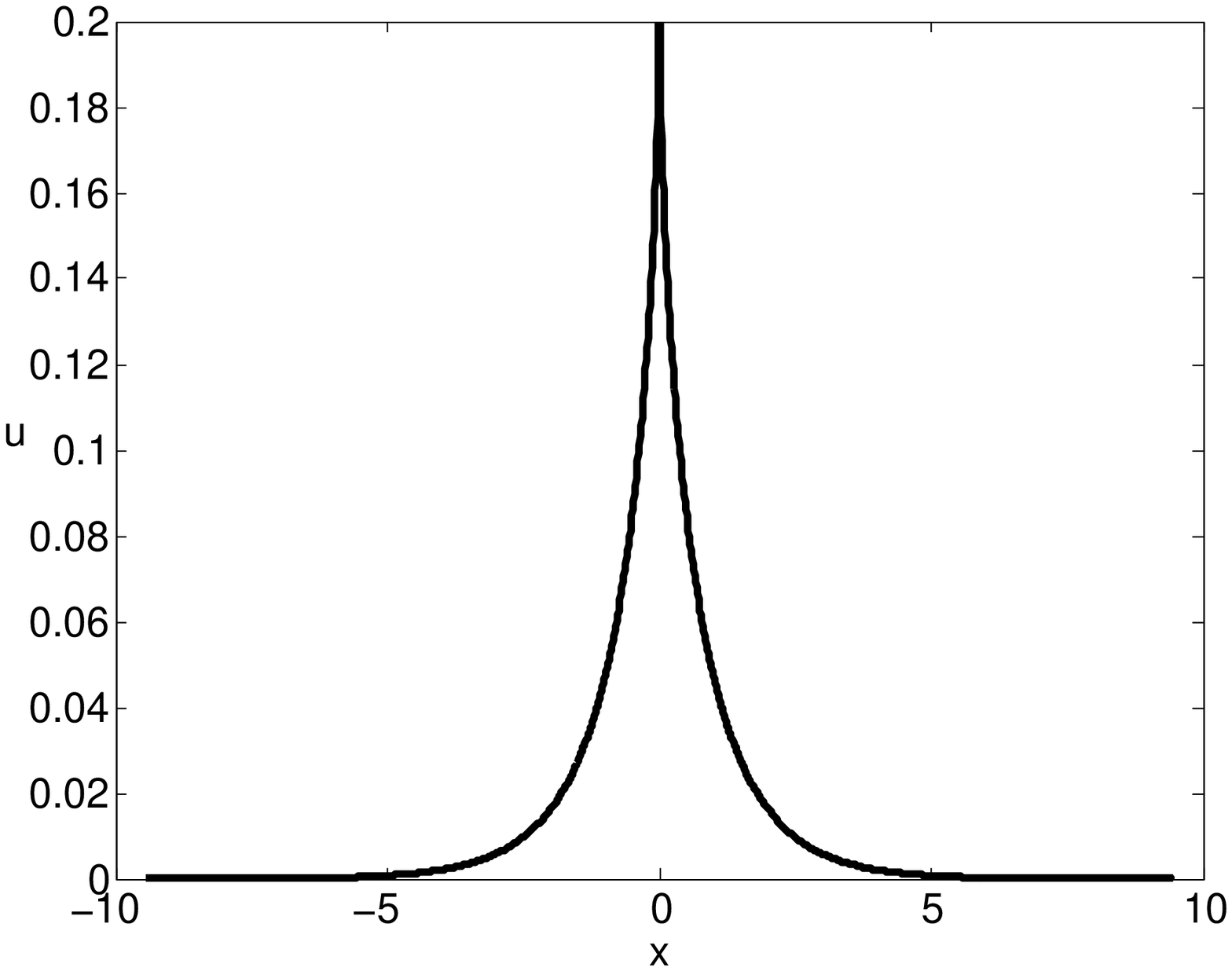}
\caption{Traveling waves with decay with speed $c=1$: 
smooth ($\gamma=0.2$), peakon ($\gamma=1$), cuspon ($\gamma=5$).}
\label{fig:init}
\end{center}
\end{figure}
The derivation of the cusped ($\gamma>1$),
resp. smooth ($\gamma<1$), solutions follows the
lines of \cite{lenells:06}.  We refer for example 
to \cite{ray:06} for a thorough discussion on the
peakon case (i.e. $\gamma=1$).

\subsection{Smooth traveling waves with decay ($\gamma<1$)}
\label{subsect-smooth}

According to the classification presented in
\cite{lenells:06}, for a fixed $\gamma\neq0$,
traveling waves $u(x-ct)$ are parametrised by
three parameters, $M$, $m$ and the speed
$c$. Moreover, they are solutions of the following
differential equation
\begin{equation}\label{eq:phitrav}
u_x^2=F(u)=
\sfrac{(M-u)(u-m)(u-z)}{c-\gamma u}.
\end{equation}
For positive values of $\gamma$, a smooth
traveling wave with decay with
$m=\inf_{x\in\R}u(x)$ and $M=\max_{x\in\R}u(x)$ is
obtained if $z=m<M<c/\gamma$, where $z:=c-M-m$.
For our purpose, we have to set $m=0$ so that the
solution decays at infinity. This gives us the
conditions $c=M$ and $\gamma<1$. We thereby
obtain the initial values for our system of
differential equations \eqref{eq:sysbarr} by
solving \eqref{eq:phitrav} numerically.  To do
this, some care has to be taken as
$u\mapsto\sqrt{F(u)}$ is not Lipschitz. We instead
solve $u_{xx}=F'(u)/2$. Once this is appropriately
done we get the initial values $U_0=u$,
$w_0=u_x$. We then set $y_0=\xi$, $q_0=1$,
$h_0=U_0^2+w_0^2$ and $H_0=\int_{-\infty}^{y_0}
h_0$. These initial values do not correspond to
the ones defined by \eqref{eq:initdatex} but they
are equivalent via relabeling and one can check
that
$(y_0,U_0,H_0,q_0,w_0,h_0)\in\G$. Figure~\ref{fig:smooth1}
displays the exact solution together with the
numerical solutions given by the ODE45 solver from
Matlab, the explicit Euler scheme, the Lie-Trotter
and the Strang splitting schemes at time $T=7$. 
We plot the points
\begin{equation*}
  (y(t,\xi_i),U(t,\xi_i)),\quad\text{ for }i=-N,\ldots,N-1,
\end{equation*}
which approximate the graph of the exact solution
$u(t,x)$ for $t=T$. The initial value is a
smooth traveling wave with parameters
$\gamma=0.2,m=0,M=c=1$, see Figure~\ref{fig:init}. 
We took relatively large
discretisation parameters $\Delta\xi=0.25$ and
$\Delta t=0.1$. We observe that the explicit Euler
scheme gives a less accurate solution than the
other schemes.  We also observe that, even for
these large discretisation parameters, the
splitting schemes have the same high as the exact
solution, thus following it at the same speed. We
do not observe any dissipation. Since both
splitting schemes give relative similar results,
in what follows, we will only display the results
given by the Strang splitting scheme.  We finally
note that all schemes preserve the positivity of
the particle density but only the splitting
schemes conserve exactly the invariants from
Section~\ref{sect-time} (these results are not
displayed).
\begin{figure}[h]
\begin{center}
\includegraphics*[height=4cm,keepaspectratio]
{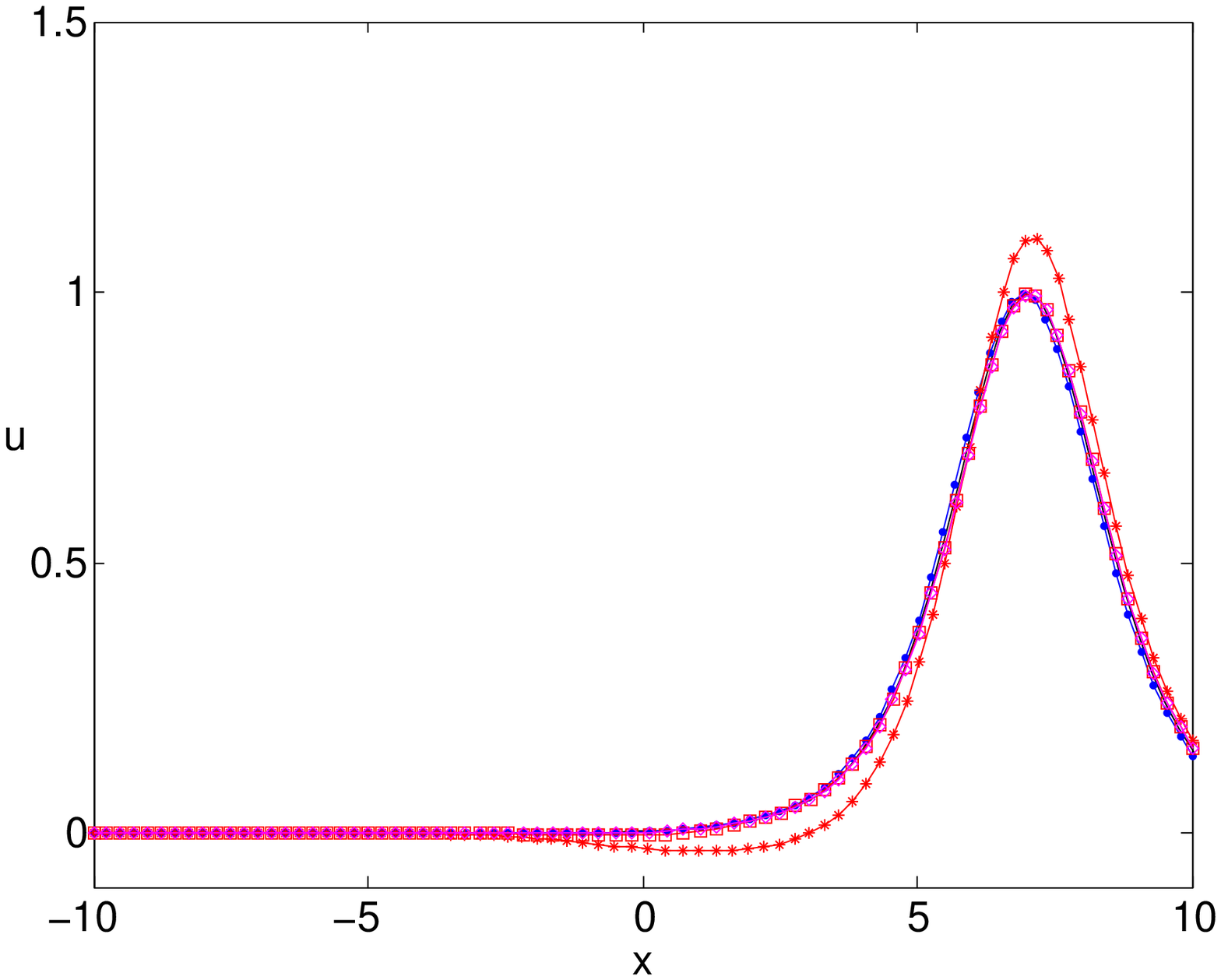}
\includegraphics*[height=4cm,keepaspectratio]
{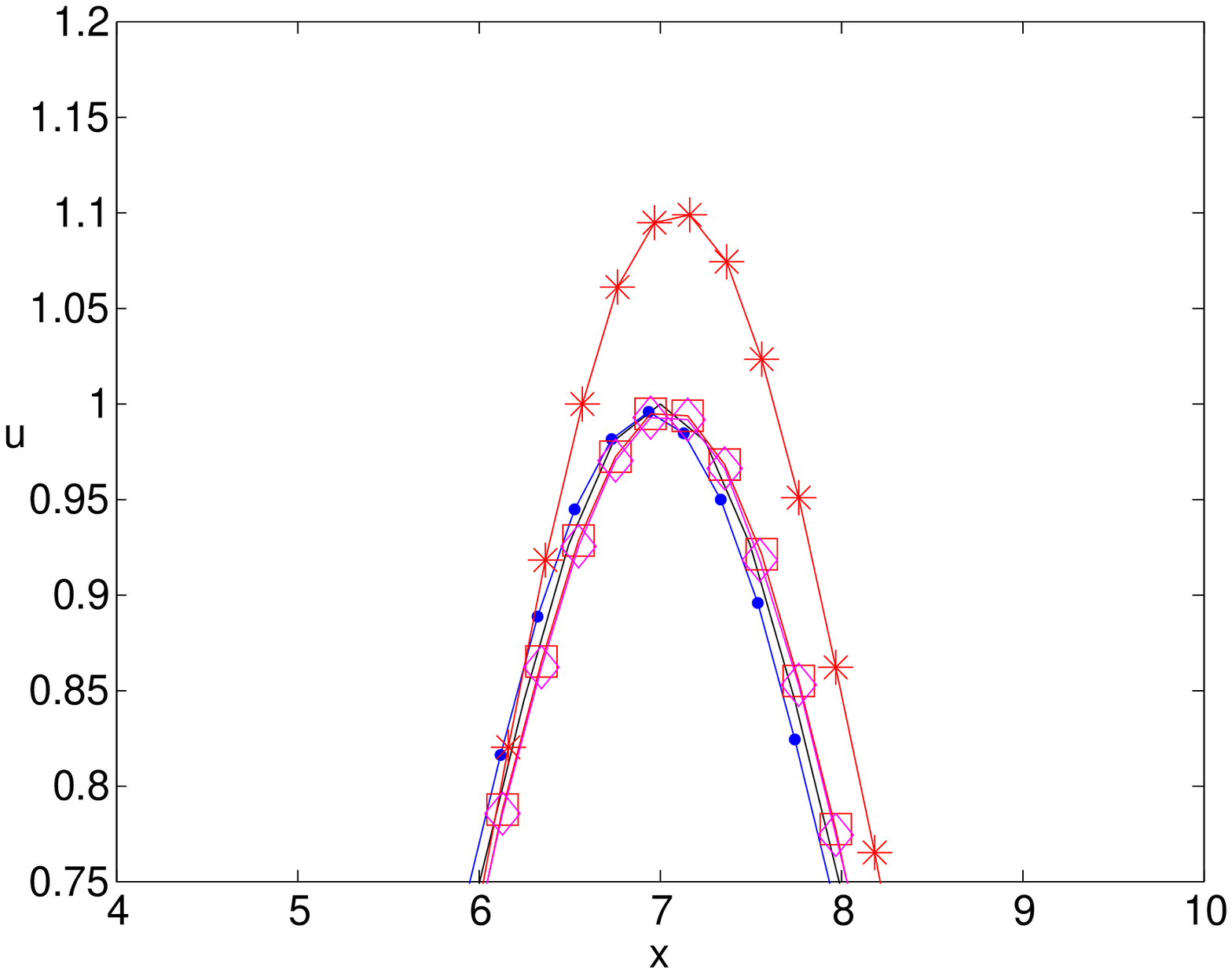}
\caption{Exact and numerical solutions of a smooth 
traveling wave with decay. Solid line=exact, dashdotted line=ODE45, 
Stars=Explicit Euler, Square=Lie-Trotter, Diamond=Strang.}
\label{fig:smooth1}
\end{center}
\end{figure}
We finally want to mention that 
for negative values of $\gamma$, 
smooth traveling waves with 
decay also exist. They are obtained 
if $c/\gamma<m=M<z$. 

\subsection{Peakon ($\gamma=1$)}
\label{subsect-peakon}

The Camassa--Holm equation, i.e. equation \eqref{eq:hr} with $\gamma=1$, 
possesses solutions with a particular shape: the peakons. 
A single peakon is a traveling wave which is given by
\begin{equation*}
u(t,x)=c\,\e^{-\abs{x-ct}}.
\end{equation*}
We note, that at the peak, the derivative of this
particular solution is discontinuous. The initial values are then
\begin{align*}
  &y_0(\xi)=\xi,\quad U_0(\xi)=u(0,\xi),\quad w_0(\xi)=u_x(0,\xi),\\
  &q_0=1,\quad h_0=U_0^2+w_0^2,\quad
  H_0=\int_{-\infty}^{\xi}h_0(\eta)\,d\eta
\end{align*}
In Figure~\ref{fig:peak}, we display the numerical
solutions given by the explicit Euler scheme and
the Strang splitting for a single peakon traveling
from left to right with speed $c=1$, see
Figure~\ref{fig:init}.  For readability reason, we
do not display the solution given by the ODE45
solver, but we note that this numerical solution
is very similar to the one given by the splitting
scheme.  Due to the discontinuity of the
derivative, we have to take smaller (in space)
discretisation parameters: $\Delta\xi=0.05$ and
$\Delta t=0.2$.  We note more grid-points before
the peak and very few just after it, but the speed
of the wave is still relatively close to the exact
one.
\begin{figure}[h]
\begin{center}
\includegraphics*[height=8cm,keepaspectratio]
{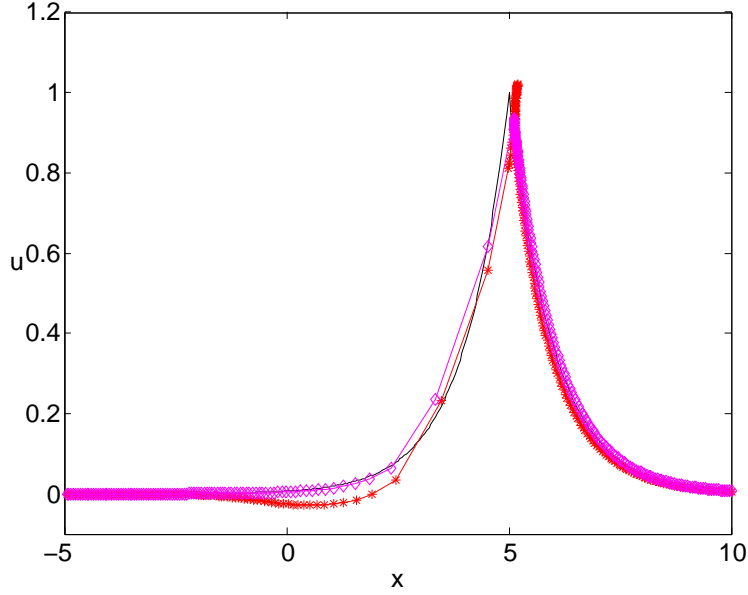}
\caption{Exact and numerical solutions at time $T=5$. 
Solid line=exact, Stars=Explicit Euler, Diamond=Strang.}
\label{fig:peak}
\end{center}
\end{figure}
As in the preceding case, only the splitting schemes 
preserve exactly the invariants of our problem.

\subsection{Cusped traveling waves with decay ($\gamma>1$)}
\label{subsect-cusped}

Let us now turn our attention to cusped traveling
waves. For $\gamma>0$, according to the
classification given in \cite{lenells:06}, cusped
solutions with $c/\gamma=\max_{x\in\R}u(x)$ and
$m=\inf_{x\in\R}u(x)$ are obtained if
$z=m=0<c/\gamma<M$.  This gives us the condition
$c=M$ and thus $\gamma>1$. The cuspon $u(x)$
satisfies \eqref{eq:phitrav}, which yields for
the indicated values of the parameters
\begin{equation}
  \label{eq:ueqcuspon}
  u_x=-\sqrt{F(u)}=-\left(\frac{M-u}{c-\gamma u}\right)^{\frac12}u
\end{equation}
for $x\geq 0$ and with the boundary value at zero
given by $u(0)=\frac{c}{\gamma}$. For such
boundary value, the differential equation
\eqref{eq:ueqcuspon} is not well-posed and the
slope at the top of the cuspon (that is $x=0$) is
indeed equal to infinity. However, we can find a
triplet $X=(y,U,H)$ in $\F$ which corresponds to
this curve, that is, such that
$(u,u^2+u_x^2\,dx)=M(X)$, see \eqref{eq:umudef}
for the definition of the map $M$. Due to the
freedom or relabeling, the representation of the
curve $(x,u(x))$ is not unique: For any
diffeomorphism $(\phi(\xi),u(\phi(\xi)))$, we
obtain an other parameterization of the same
curve. Here, we look for a smooth $\phi(\xi)$ (and
we set $y(\xi)=\phi(\xi)$) such that
$U=u(\phi(\xi))=u(y(\xi))$ is smooth, even if $u$
is not. We introduce the function
\begin{equation*}
  g(u)=-\int_{\frac{c}{\gamma}}^{u}\frac{dz}{\sqrt{F(z)}}.
\end{equation*}
Since $\frac{dx}{du}=-\frac1{\sqrt{F(u)}}$, by
\eqref{eq:ueqcuspon}, if we choose
\begin{equation*}
  U(\xi)=\frac{c}\gamma-\xi,\quad y(\xi)=g(U(\xi))
\end{equation*}
then we get, at least for
$\xi\in[0,\frac{c}\gamma]$, a triplet for which
$U(\xi)=u(y(\xi))$. We set the energy density by
using \eqref{eq:lagcoord3} and get
\begin{equation*}
  H_\xi=U^2y_\xi+\frac{U_\xi^2}{y_\xi}.
\end{equation*}
However, in this case, 
\begin{equation*}
  y_\xi=g'(U)U_\xi=\left(\frac{c-\gamma U}{M-U}\right)^{\frac12}\frac1{U}
\end{equation*}
so that $H_\xi(0)=\infty$ and it is incompatible
with the requirement that all the derivatives in
Lagrangian coordinates are bounded in $L^\infty(\R)$,
see \eqref{eq:lagcoord1}. Thus, we take
\begin{equation*}
  U(\xi)=\frac{c}\gamma-\xi^2,\quad y(\xi)=g(U(\xi)),\quad
  H_\xi=U^2y_\xi+\frac{U_\xi^2}{y_\xi}.
\end{equation*}
In this case, we have
\begin{equation*}
  y_\xi(\xi)=g'(U)U_\xi=\frac2{U(\xi)}\left(\frac{c-\gamma U(\xi)}{M-U(\xi)}\right)^{\frac12}\xi=\frac{2\sqrt{\gamma}}{U(\xi)(M-U(\xi))^{\frac12}}\xi^2
\end{equation*}
and 
\begin{equation*}
  H_\xi(0)=\frac{2c}{\gamma^2}(M\gamma-c)^{\frac12}
\end{equation*}
is finite. The problem we face now is that the
functions are given only on the interval
$[0,\frac{c}\gamma)$ and
$\lim_{\xi\to\frac{c}{\gamma}}y(\xi)=\infty$. We
know that the tail of the cuspon behaves as
$u(x)\approx\frac{c}\gamma \e^{-\sqrt{\frac{M}c}x}$
as $x$ tends to $\infty$, see
\cite{lenells:06}. Since we require that
$y(\xi)-\xi$ remains bounded, we would like to
have $U(\xi)\approx\frac{c}\gamma
\e^{-\sqrt{\frac{M}c}\xi}$ for large
$\xi$. Therefore we introduce the following
partitions functions $\chi_1$ and $\chi_2$ defined
as
\begin{equation*}
  \chi_1(\xi)=
  \begin{cases}
    1&\text{ if }\xi<a\\
    -\frac{1}{b-a}(\xi-b)&\text{ for }\xi\in[a,b]\\
    0&\text{ if }x>b
  \end{cases}
\end{equation*}
and $\chi_2(\xi)=1-\chi_1$, where $a<b$ are two
parameters. We finally set
\begin{equation*}
  U(\xi)=\chi_1(\xi)(\frac{c}\gamma-\xi^2)+\chi_2(\xi)\frac{c}\gamma \e^{-\sqrt{\frac{M}c}\xi}
\end{equation*}
and
\begin{equation*}
  y(\xi)=g(U(\xi)),\quad H_\xi=U^2y_\xi+\frac{U_\xi^2}{y_\xi}.
\end{equation*}
By a proper choice of the parameters $a$ and $b$,
we can guarantee that $y_\xi(\xi)\geq0$ for all
$\xi\geq0$. We extend
$X(\xi)=(y(\xi),U(\xi),H(\xi))$ on the whole axis
by parity and we obtain an element in $\F$ such
that \eqref{eq:umudef} is
satisfied. Figure~\ref{fig:cuspedInityU} displays 
$y(\xi)$ and $U(\xi)$.
\begin{figure}[h]
\begin{center}
\includegraphics*[height=4cm,keepaspectratio]
{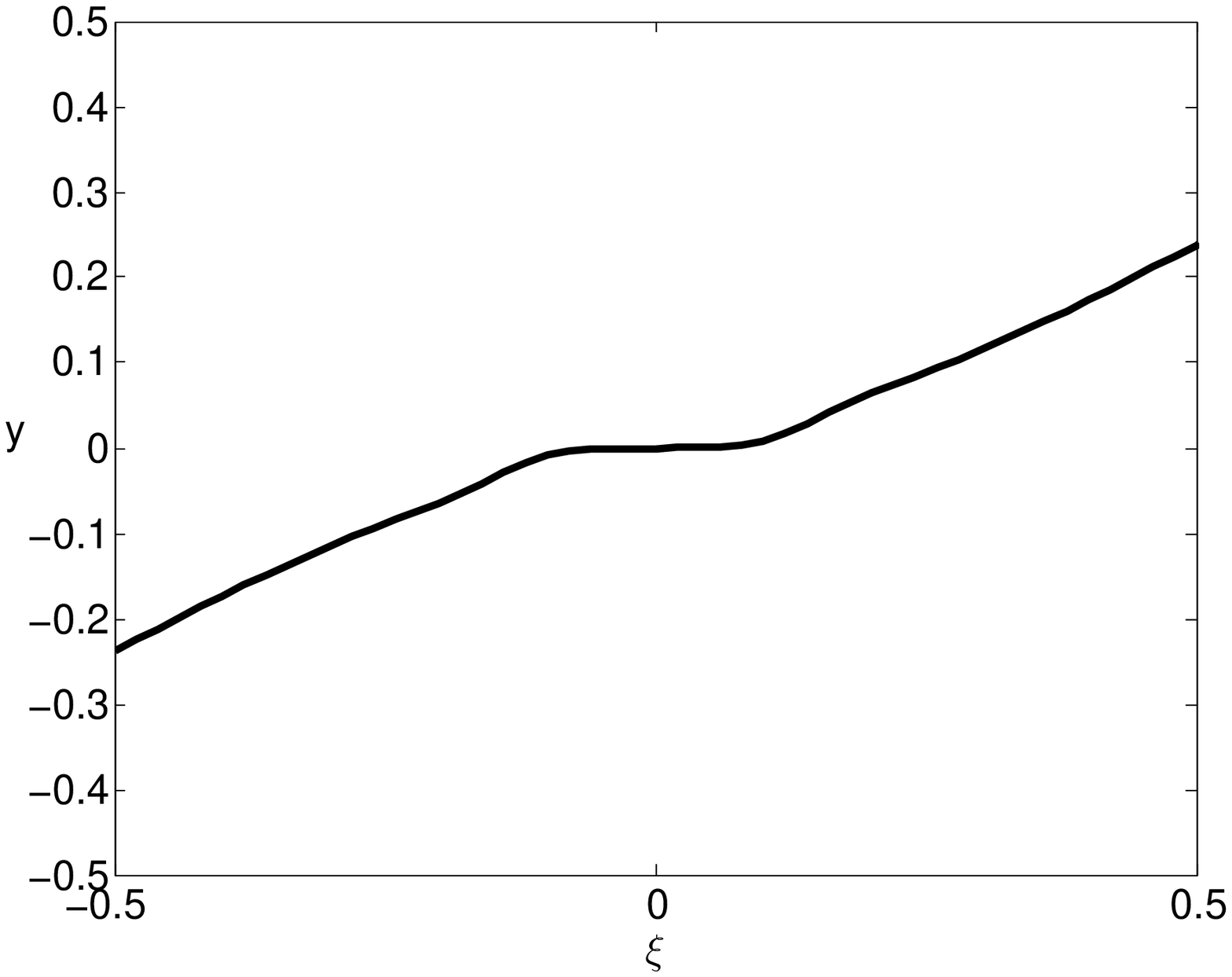}
\includegraphics*[height=4cm,keepaspectratio]
{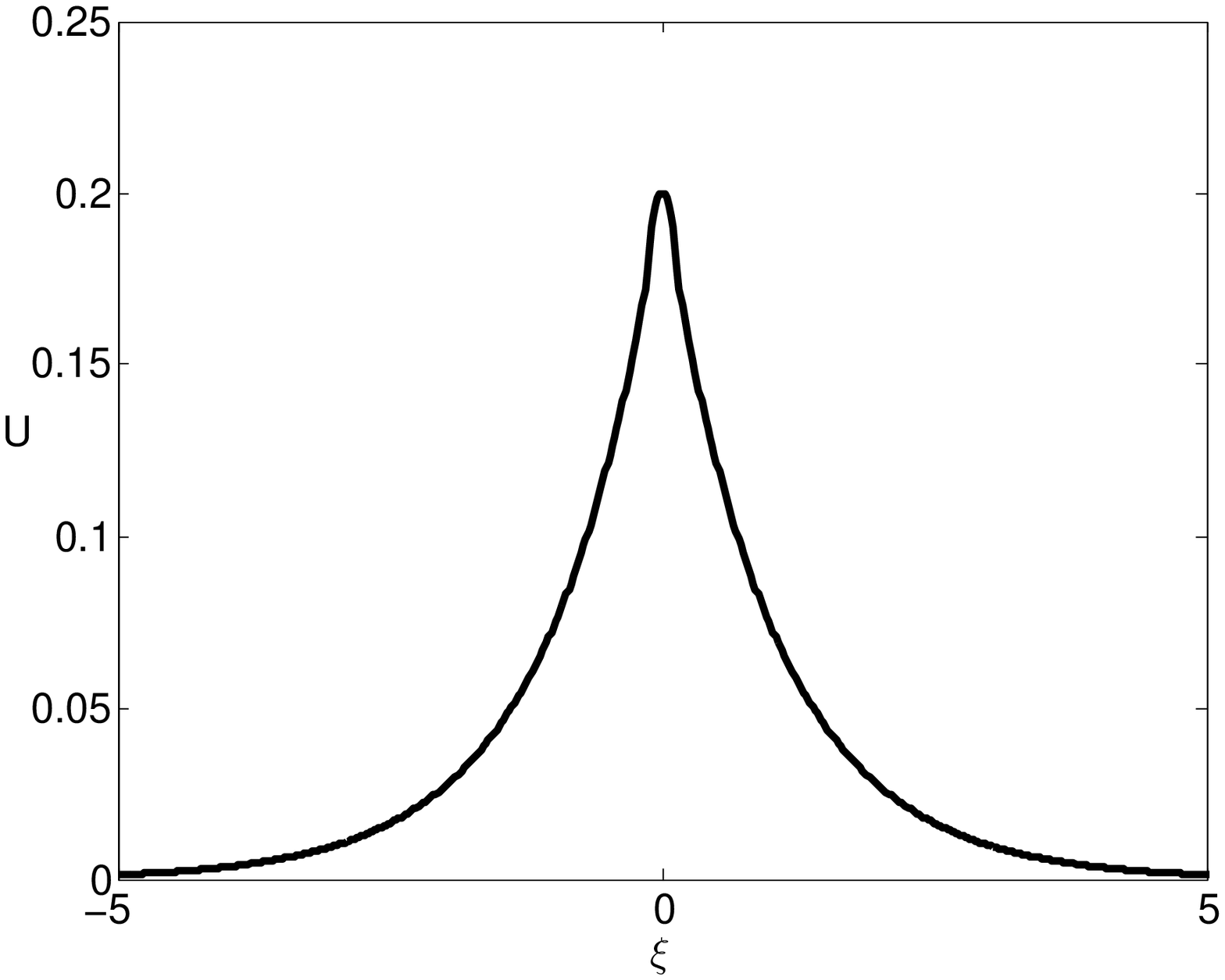}
\caption{The function $y(\xi)$ (left picture) and
  the function $U(\xi)$. Note that these functions
  are smooth while $u_0(x)$ is not Lipschitz, see
  Figure \ref{fig:init}.}
\label{fig:cuspedInityU}
\end{center}
\end{figure}
Figure~\ref{fig:cusped} displays the exact
solution together with the numerical solutions
given by the explicit Euler scheme and the Strang
splitting scheme at time $T=6$. As before, we
note that the numerical solution given by the
ODE45 solver is very similar to the one given by
our splitting scheme.  The initial value is a
cusped traveling wave with parameters
$\gamma=5,m=0,M=c=1$, see Figure~\ref{fig:init}.
For the discretisation parameters, we take
$\Delta\xi=0.1$ and $\Delta t=0.1$.  We see that,
even for initial data with infinite derivative
$u_x(0)=\pm\infty$, the spatial discretisation
converges. For the time discretisation, as
expected, explicit Euler is less accurate than the
other schemes.  We also remark that only the
splitting schemes preserve the positivity of the
particle density and conserve the invariants.
\begin{figure}[h]
\begin{center}
\includegraphics*[height=8cm,keepaspectratio]
{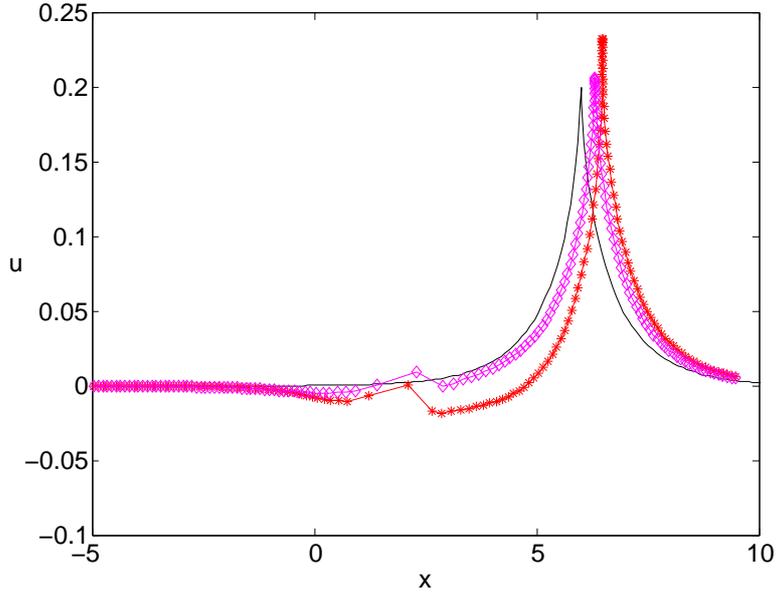}
\caption{Exact and numerical solutions of a cusped  
traveling wave with decay. Solid line=exact, Stars=Explicit Euler, 
Diamond=Strang.}
\label{fig:cusped}
\end{center}
\end{figure}
We finally note that, for negative values of $\gamma$, 
an anticusped traveling wave with $c/\gamma=\min_{x\in\R}u(x)$ 
and $m=\sup_{x\in\R}u(x)$ is obtained if $c/\gamma<m=M<z$.

\subsection{Peakon-antipeakon collisions}
\label{subsect-peakcoll}

In Figure~\ref{fig:collpeak1} we display a collision between a peakon 
and an antipeakon for $\gamma=1$. For this problem, 
the initial value is given by
$$
u(0,x)=\e^{-\abs{x}}-\e^{-\abs{x-1}}.
$$
The numerical solutions are computed with 
grid parameters $\Delta\xi=0.1$ and $\Delta t=0.1$ 
until time $T=8$. Once again we notice that 
the spatial discretisation converges.
\begin{figure}[h]
\begin{center}
\includegraphics*[height=8cm,keepaspectratio]
{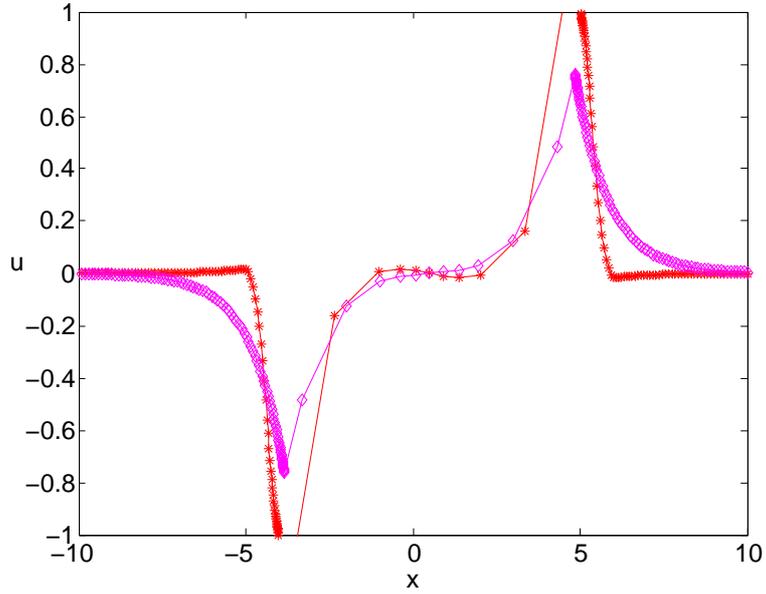}
\caption{Peakon-antipeakon collision for $\gamma=1$. 
Stars=Explicit Euler, Diamond=Strang.}
\label{fig:collpeak1}
\end{center}
\end{figure}
Let us now see what happens for a peakon-antipeakon collision 
with $\gamma\neq1$. In Figure~\ref{fig:collpeak5} we present a similar
experiment as the above one, but where we use $\gamma=5$ and $T=2$. 
Here, we plot the graph given by the points
\begin{equation*}
  (y(t,\xi_i),\frac{h}{q}(t,\xi_i)),\quad\text{ for }i=-N,\ldots,N-1
\end{equation*}
for $t=T$. From the right part of Figure~\ref{fig:collpeak5}
we see that only the splitting schemes preserve
the positivity of the energy density. 
As always, only the splitting
schemes conserve exactly the invariants.

\begin{figure}[h]
\begin{center}
\includegraphics*[height=4cm,keepaspectratio]
{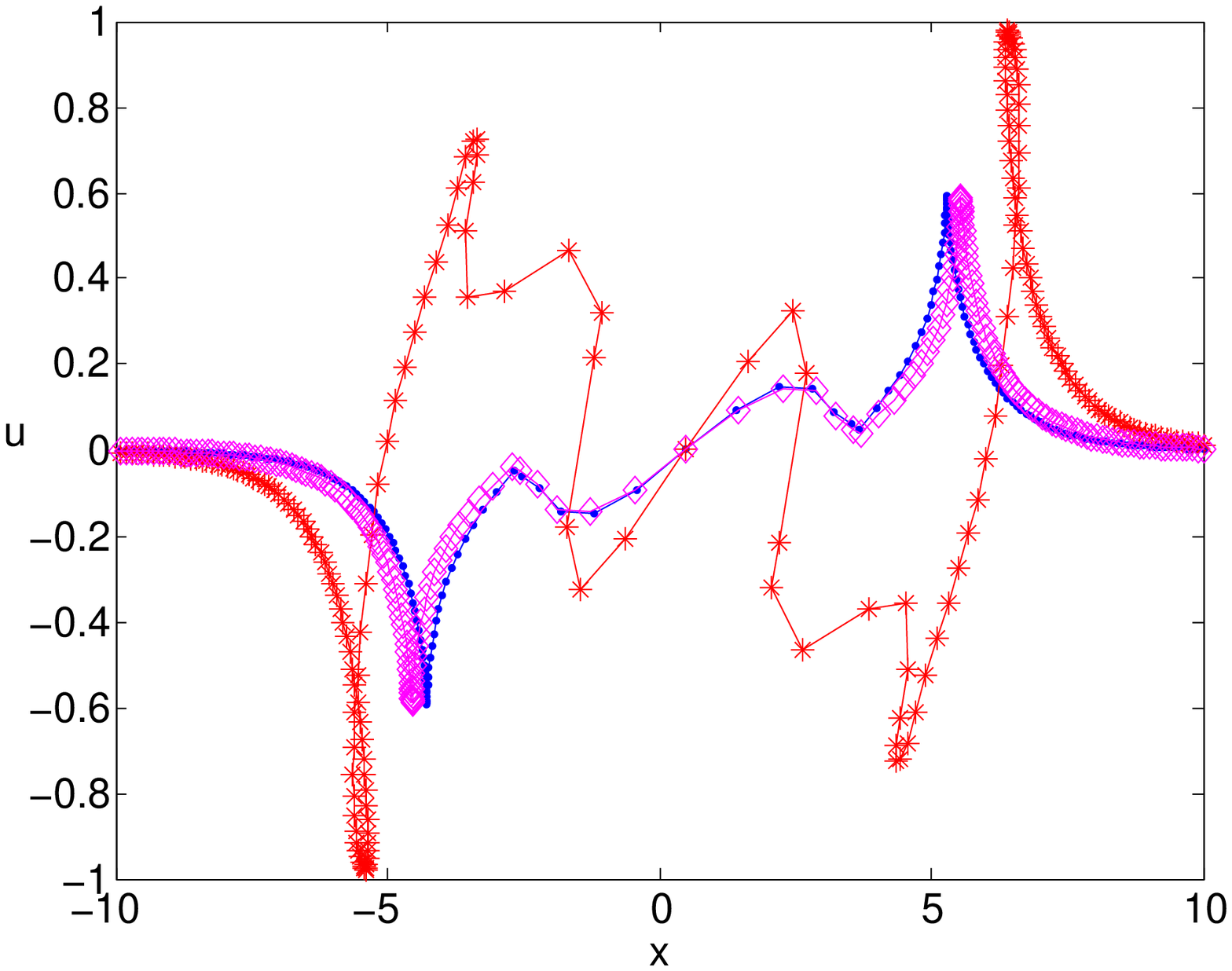}
\includegraphics*[height=4cm,keepaspectratio]
{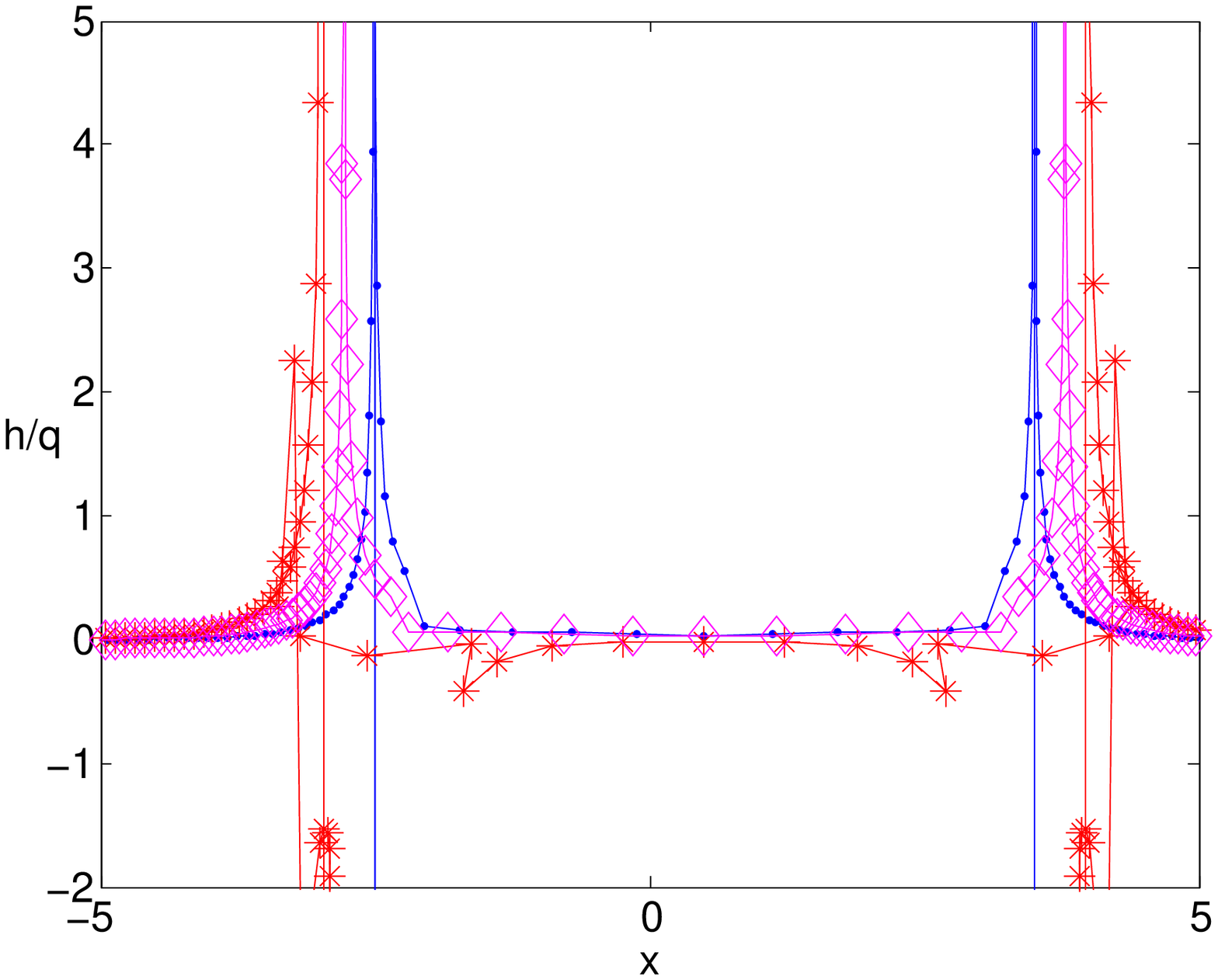}
\caption{Peakon-antipeakon collision for $\gamma=5$ 
at time $T=2$ (left) and 
energy density (right) at the first time, 
where the numerical solution given 
by ODE45 is not positive ($q=-1.7394e-05$). 
Dashdotted line=ODE45, Stars=Explicit Euler, 
Diamond=Strang.}
\label{fig:collpeak5}
\end{center}
\end{figure}

\subsection{Collision of smooth traveling waves}
\label{subsect-collision}

We want now to study the behaviour of the
numerical schemes when dealing with a collision of
smooth traveling waves, as this in an important
feature of our numerical scheme to be able to
handle such configuration. To do so, we consider
the following initial value
$$
u(0,x)=-x\e^{-x^2/2}.
$$
Figure~\ref{fig:col} displays 
the exact solution (i.e. the numerical 
solution with very small 
discretisation parameters) for $\gamma=0.8$. 
It is remarkable to see that even for 
such solution, our scheme performs very well. 
\begin{figure}[h]
\begin{center}
\includegraphics*[height=4cm,keepaspectratio]
{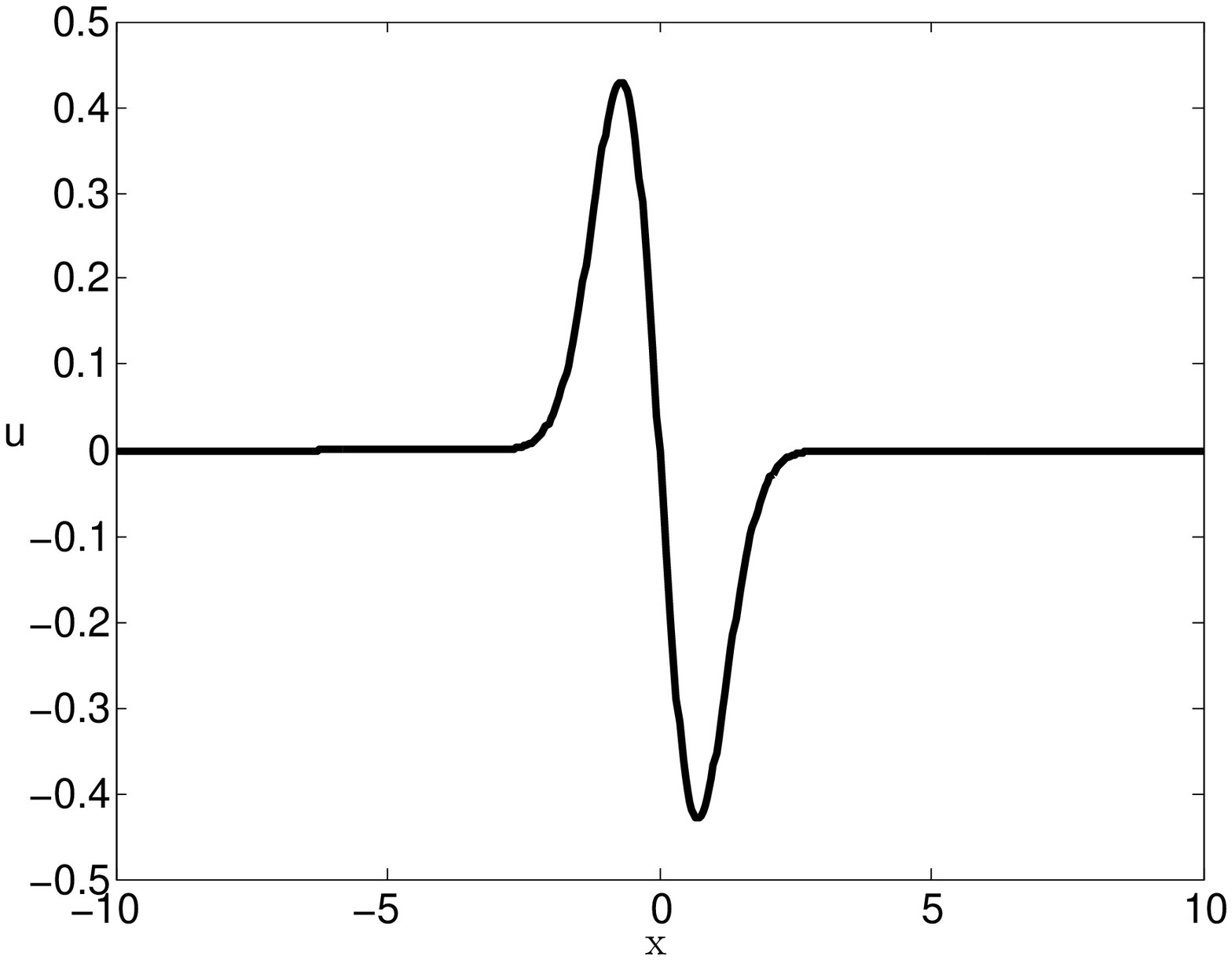}
\includegraphics*[height=4cm,keepaspectratio]
{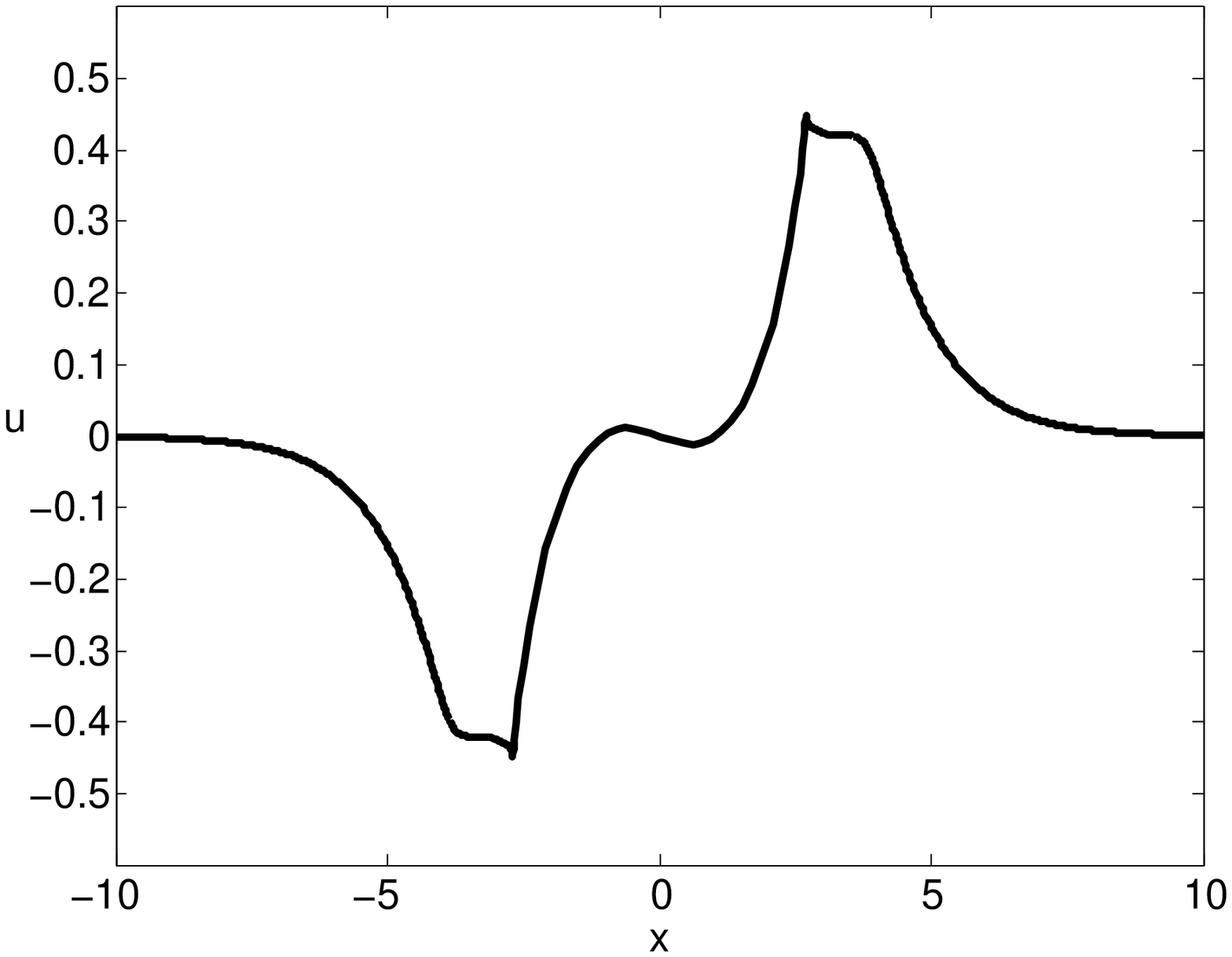}
\caption{Collision of smooth traveling waves: 
Initial data (left) and exact solution at 
time $T=11$.}
\label{fig:col}
\end{center}
\end{figure}
In order to get a better understanding of this problem, 
we look at the evolution of the waves with time. 
Figure~\ref{fig:smoothwaterfall} shows 
this evolution together  
with a zoom close to the collision time.
\begin{figure}[h]
\begin{center}
\includegraphics*[height=4cm,keepaspectratio]
{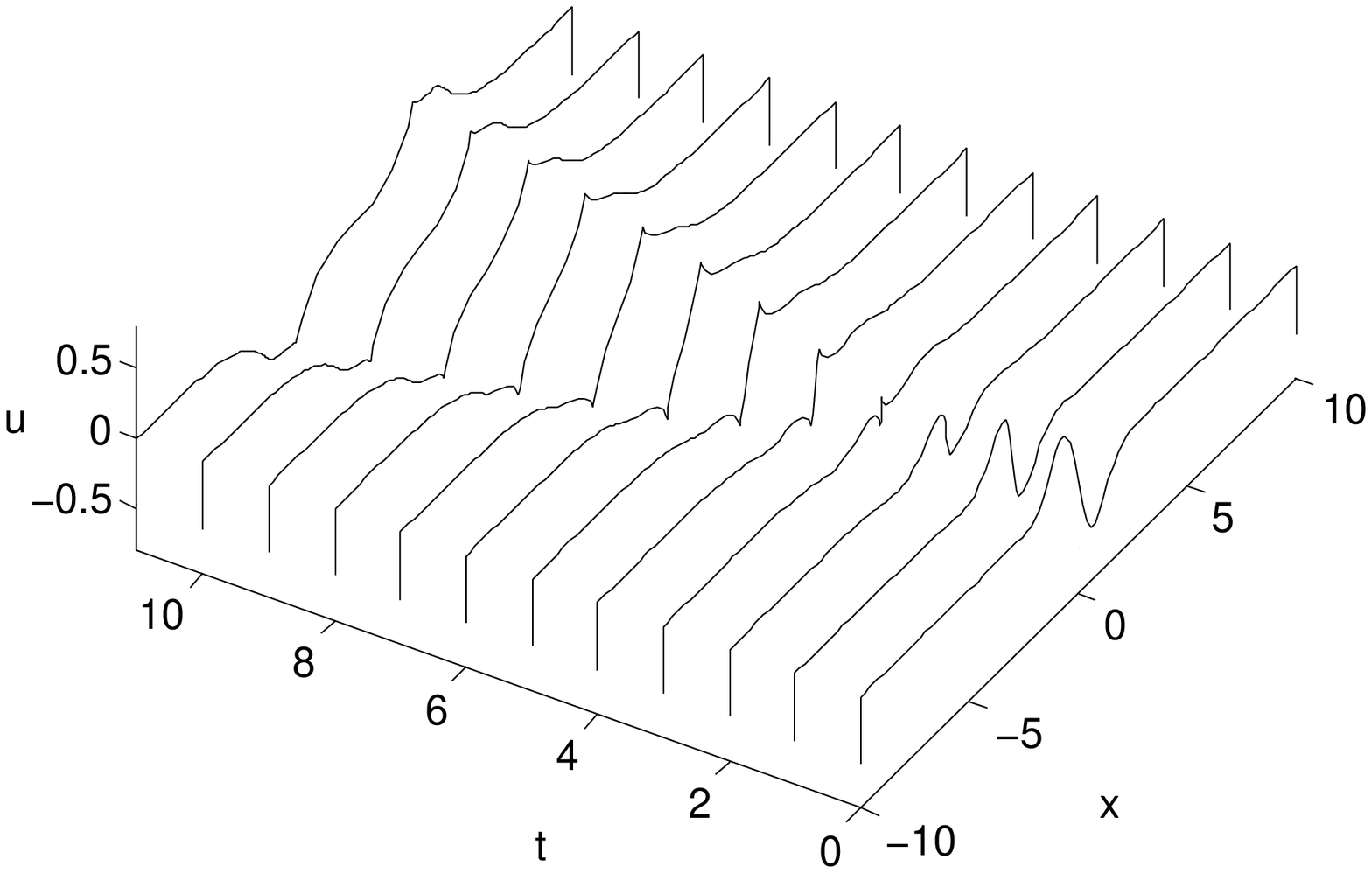}
\includegraphics*[height=4cm,keepaspectratio]
{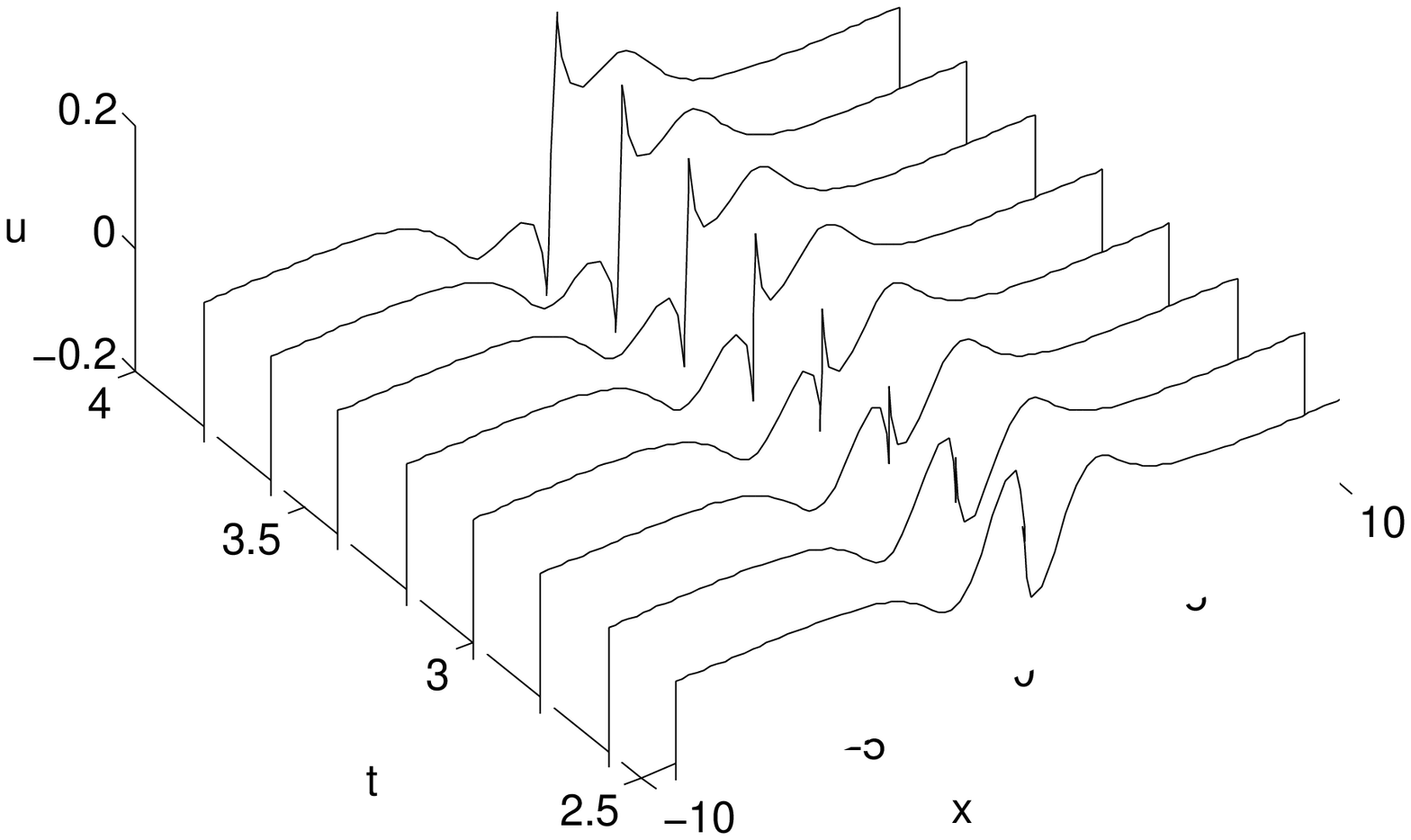}
\caption{Collision of smooth traveling waves: 
Evolution in time (left) and zoom of the 
evolution close to the collision.}
\label{fig:smoothwaterfall}
\end{center}
\end{figure}
We now present the results given by the numerical
schemes with grid parameters $\Delta\xi=0.25$ and
$\Delta t=0.1$ in Figure~\ref{fig:colnum}. 
\begin{figure}[h]
\begin{center}
\includegraphics*[height=8cm,keepaspectratio]
{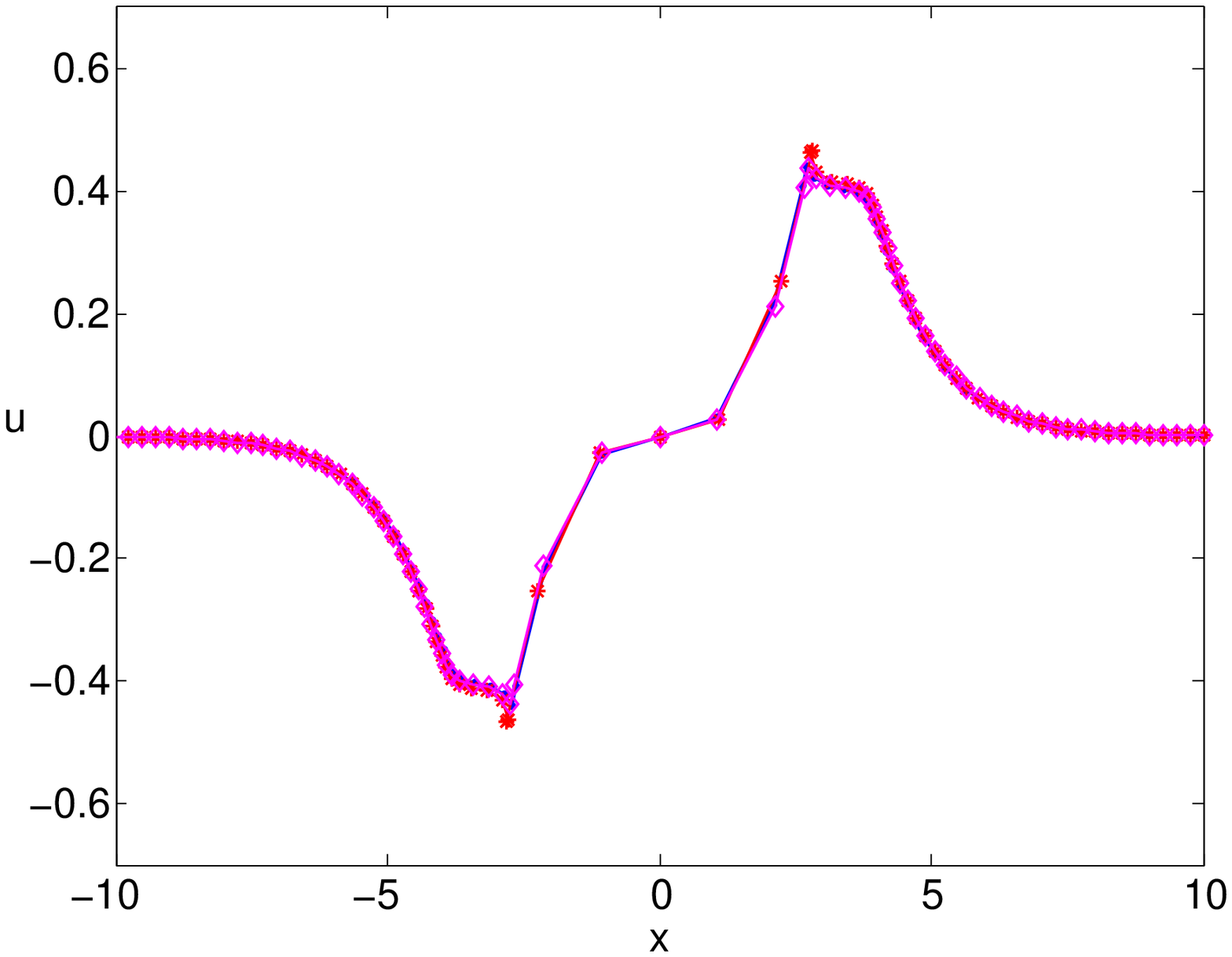}
\caption{Collision of smooth traveling waves: 
numerical solutions at 
time $T=11$.  
Dashdotted line=ODE45, Stars=Explicit Euler, 
Diamond=Strang.}
\label{fig:colnum}
\end{center}
\end{figure}
We have also checked that only the splitting
schemes preserve the positivity of the particle
density and conserve the invariants of our
problem. Finally, in Figure~\ref{fig:colposiwater}
we display, with the same parameter values as
above, the evolution in time of the energy density
along the numerical solution given by the Strang
splitting scheme.  We can observe the
concentration of the energy and then its
separation in two parts, following the waves.
\begin{figure}[h]
\begin{center}
\includegraphics*[height=4cm,keepaspectratio]
{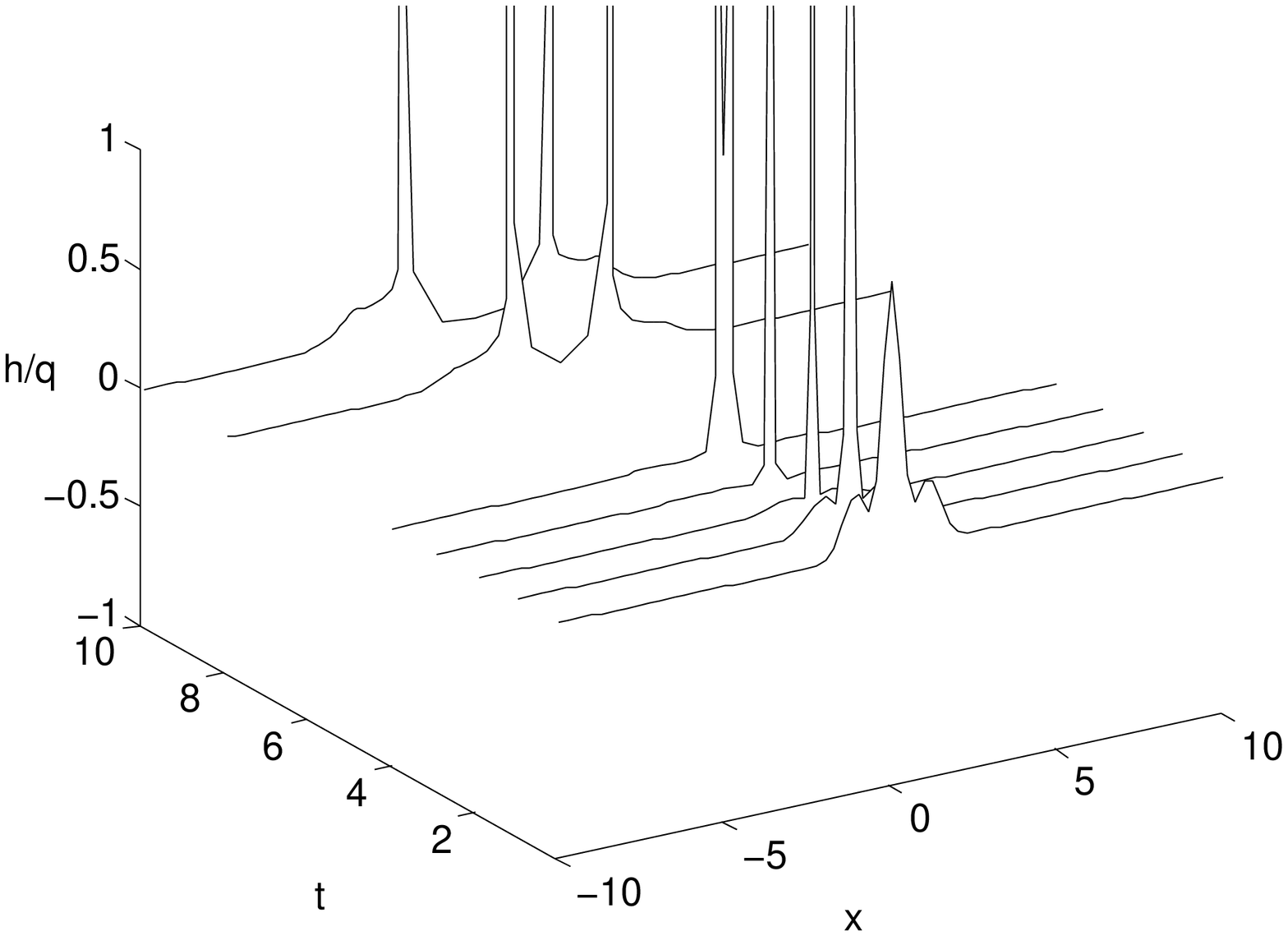}
\includegraphics*[height=4cm,keepaspectratio]
{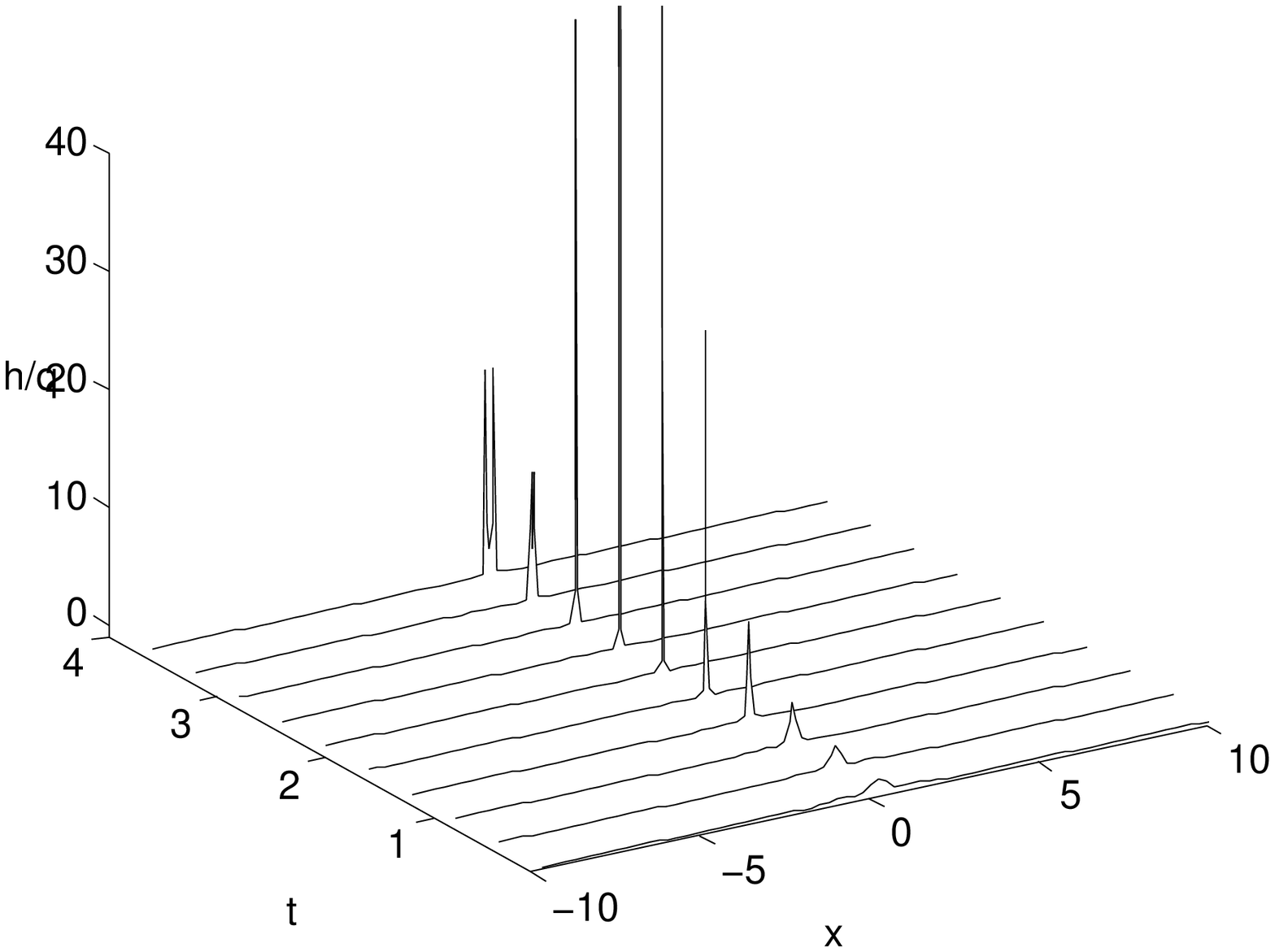}
\caption{Evolution of the energy density (left picture) 
along the numerical solution given by the Strang splitting and 
close up look at the blow up time (right).}
\label{fig:colposiwater}
\end{center}
\end{figure}
With all these numerical observations, we 
can conclude that the proposed 
spatial discretisation is 
robust and qualitatively correct. 
The time integrators are relatively comparable 
but only the splitting schemes have the 
additional properties of maintaining the positivity 
of the energy density and conserve exactly 
the invariants of our partial differential 
equation.

%

\bibliographystyle{plain}
\bibliography{bibnumhr}

\end{document}